\documentclass[11pt, a4paper]{amsart}

\usepackage{mathtools, verbatim, bbm,cite}
\mathtoolsset{showonlyrefs}

\newtheorem{proposition}{Proposition}[section]
\newtheorem{lemma}[proposition]{Lemma}
\newtheorem{theorem}[proposition]{Theorem}
\newtheorem{corollary}[proposition]{Corollary}
\newtheorem{definition}[proposition]{Definition}

\newtheorem{remark}[proposition]{Remark}

\newtheorem{observation}[proposition]{Fact}

\renewcommand{\d}{\,\mathrm{d}}

\usepackage{xcolor}

\newcommand{\supp}{\mbox{supp}}
\newcommand{\dist}{\mbox{dist}}
\newcommand{\uno}{\operatorname{\mathbbm{1}}}

\newcommand{\hn}{\mathcal H^{n-1}}
\newcommand{\hm}{\mathcal H^{n-2}}

\newcommand{\cm}{\zeta}
\newcommand{\rp}{\mathbb R_{\scriptscriptstyle +}}

\newcommand{\sn}{{S^{n-1}}}
\newcommand{\sm}{S^{n-2}}

\newcommand{\on}{\operatorname{O}(n)}
\newcommand{\om}{\operatorname{O}(n-1)}
\renewcommand{\d}{\,\mathrm{d}}
\newcommand{\pole}{e}
\newcommand{\ra}{\mathbin{\tilde +}}

\newcommand{\oZ}{\operatorname{Z}}
\newcommand{\oV}{\operatorname{V}}

\newcommand{\oW}{\operatorname{W}}

\newcommand{\starn}{\mathcal S_{\scriptscriptstyle 0}^n}
\newcommand{\measn}{M(\sn )_{\scriptscriptstyle +}}

\newcommand{\cfn}{C(\sn )_{\scriptscriptstyle +}}
\newcommand{\bfn}{B(\sn )_{\scriptscriptstyle +}}
\newcommand{\R}{\mathbb R}
\newcommand{\rn}{\mathbb R^n}
\newcommand{\poly}{\mathcal P_{\mathbb Q}}
\newcommand{\somename}{strong Cara\-th\'eo\-dory and uniformly controlled }
\newcommand{\radon}{\operatorname{\overline R}}
\newcommand{\rhot}{\tau}

\newcommand{\G}{\xi}

\title{Measure-valued valuations on star bodies}

\author[J.~S. Ib\'a\~nez-Marcos]{Jorge S. Ib\'a\~nez-Marcos}
\address{Departamento de An\'alisis Matem\'atico y Matem\'atica Aplicada\\
Facultad de Matem\'aticas \\ Universidad Complutense de Madrid \\
Madrid 28040}
\email{jorgesib@ucm.es}

\author[M. Ludwig]{Monika Ludwig}
\address{Institut f\"ur Diskrete Mathematik und Geometrie,
Technische Universit\"at Wien,
Wiedner Hauptstra\ss e 8-10/1046,
1040 Wien, Austria}
\email{monika.ludwig@tuwien.ac.at}

\author[P. Tradacete]{Pedro Tradacete}
\address{Instituto de Ciencias Matem\'aticas (CSIC-UAM-UC3M-UCM)\\
Consejo Superior de Investigaciones Cient\'ificas\\
C/ Nicol\'as Cabrera, 13--15, Campus de Cantoblanco UAM\\
28049 Madrid, Spain.}
\email{pedro.tradacete@icmat.es}

\author[I. Villanueva]{Ignacio Villanueva}
\address{Departamento de An\'alisis Matem\'atico \\
Facultad de Matem\'aticas \\ Universidad Complutense de Madrid \\
Madrid 28040}
\email{ignaciov@mat.ucm.es}

\thanks{Research partially supported by grants PID2020-116398GB-I00, PID2024-162214NB-I00 and CEX2023-001347-S funded by MCIN/AEI/10.13039/501100011033. The first author has also received financial support from Ministerio de Ciencia, Innovación y Universidades through an FPU Grant FPU22/02055. The second author was funded in part by the Austrian Science Fund (FWF) DOI:10.55776/P34446.}

\begin{document}

\begin{abstract}
A complete classification of weak$^*$~continuous, measure-valued valuations is established on star bodies in  $\R^n$. Consequences are an integral representation of rotation equivariant, measure-valued valuations and a characterization of dual area measures.
\end{abstract}

\subjclass[2020]{52B45, 52A30, 28A33} 

\keywords{Star bodies; measure-valued valuation; dual area measure}

\maketitle

\section{Introduction} 
In his celebrated work, Hadwiger  \cite{Hadwiger} classified continuous, translation and rotation invariant, scalar-valued valuations on convex bodies in $\mathbb{R}^n$ and obtained a characterization of intrinsic volumes. Schneider  \cite{Schneider:75} extended Hadwiger's results to measure-valued valuations. He established a classification of weak$^*$~continuous, locally determined, translation invariant, and rotation equi\-variant valuations on convex bodies in $\R^n$ with values in the space of signed Radon measures on the unit sphere and a characterization of area measures 
(see  \cite{Schneider} for comprehensive information on intrinsic volumes, area  
measures, and valuations on convex bodies).

Dual intrinsic volumes were introduced by Lutwak \cite{Lutwak75} within the framework of the dual Brunn--Minkowski theory. They are defined on the space $\starn$ of star bodies in $\R^n$, equipped with the topology induced by the radial distance (see Section~\ref{sectionnotation} for precise definitions). 
A map $\oZ$ defined on $\starn$ with values in an additive abelian semigroup is a \emph{valuation}
if 
\begin{equation}\label{eq:val}
\oZ(K\cup L)+\oZ(K\cap L)=\oZ(K)+ \oZ(L).
\end{equation}
for every $K, L\in\starn$. In  \cite{TrViJMAA, TrViIntRep, Vi}, extending results by Klain~\cite{Klain1996, Klain1997}, a classification of continuous, scalar-valued valuations on $\starn$ was established, and a characterization of dual intrinsic volumes was obtained  (see \cite{Gardner, Schneider} for more information on the dual Brunn--Minkowski theory and its applications).

The aim of this paper is to establish classification results for measure-valued valuations on $\starn$. Let $\measn$ denote the space of non-negative Radon measures on the unit sphere $\sn$ and $\rp$ the non-negative real numbers. Our main classification result is given in the following theorem.   
\goodbreak

\begin{theorem}\label{th:rep}
A map $\oZ\colon  \starn \to \measn$ is a weak$^{\,*}\!$ continuous valuation satisfying $\oZ(\{0\})=0$ if and only if there exist 
$\mu\in \measn$ and a $\mu$-\somename map $U\colon \rp \times \sn  \to \measn$ such that $U(0,s)=0$ for $\mu$-almost every $s$ and 
$$\langle \oZ(L),g\rangle  =\int_{\sn } \langle U(\rho_L(s), s),g\rangle  \d\mu(s)$$
for every $L\in \starn$ and $g\in C(\sn)$. 
\end{theorem}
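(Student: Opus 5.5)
We reduce the measure-valued statement to the scalar classification of \cite{TrViIntRep, Vi}. That result says a continuous valuation $V\colon\starn\to\R$ with $V(\{0\})=0$ has the form $V(L)=\int_{\sn} K(\rho_L(s),s)\d\mu(s)$, where $\mu$ is a finite Borel measure and $K$ is a $\mu$-\somename function.

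\emph{Sufficiency.} This is the easy direction. For $L\in\starn$ the function $s\mapsto\rho_L(s)$ is continuous. The Carath\'eodory property of $U$ makes $s\mapsto\langle U(\rho_L(s),s),g\rangle$ measurable, and the uniform control makes it $\mu$-integrable. Hence the formula defines a non-negative linear functional on $C(\sn)$, that is, an element of $\measn$.

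The valuation property is pointwise: $\rho_{K\cup L}=\max(\rho_K,\rho_L)$ and $\rho_{K\cap L}=\min(\rho_K,\rho_L)$, so the two integrands agree at every $s$. The normalization $\oZ(\{0\})=0$ follows from $U(0,s)=0$ $\mu$-a.e.

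For weak$^*$ continuity, suppose $L_i\to L$ radially. Then $\rho_{L_i}\to\rho_L$ uniformly, and we conclude by dominated convergence. The dominating function comes from the uniform control hypothesis applied on the bounded set $\{\rho_{L_i}\}$.

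\emph{Necessity.} For each $g\in C(\sn)$ the map $V_g(L)=\langle\oZ(L),g\rangle$ is a continuous real valuation vanishing at $\{0\}$, so the scalar theorem applies to it. The difficulty is to obtain one common $\mu$ and a kernel that is linear in $g$.

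\begin{enumerate}
\item \emph{A common control measure.} Take $g\equiv1$. Then $V_1(L)=\oZ(L)(\sn)$ is non-negative and monotone in $L$, since $\oZ(L)-\oZ(K)=\oZ(L)-\oZ(K\cap L)=\oZ(K\cup L)-\oZ(K)\ge0$ for $K\subseteq L$ is itself a valuation difference. I would follow the scalar proof to build $\mu$: define set functions on Borel sets $A\subseteq\sn$ by $\nu_t(A)=V_1(tA)$, where $tA$ denotes the radial body of radius $t$ over $A$, extended by regularity and inner approximation. Then let $\mu=\sum_k2^{-k}\nu_{t_k}/(1+\nu_{t_k}(\sn))$ over rational $t_k$.
\item \emph{Measures on cones.} For each $t$ and Borel $A$, define $\oZ(tA)\in\measn$ via the same approximation, using monotonicity and weak$^*$ limits of increasing nets. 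Then $A\mapsto\oZ(tA)$ is a $\measn$-valued, countably additive measure, and it is dominated by $\nu_t\ll\mu$.
\item \emph{Radon--Nikodym step.} Pass to a countable dense set of $g$ and rational $t$. Since $\langle\oZ(tA),g\rangle\le\|g\|_\infty\nu_t(A)$, the scalar densities $u_{t,g}(s)$ exist and satisfy $|u_{t,g}(s)|\le\|g\|_\infty\,\frac{d\nu_t}{d\mu}(s)$. Off a $\mu$-null set they are positive and linear in $g$ over $\mathbb Q$, so they extend to positive functionals. By Riesz this gives $U(t,s)\in\measn$.
\item \emph{Extension in $t$.} Monotonicity of $t\mapsto U(t,s)$ comes from monotonicity of $\oZ$, and continuity in $t$ comes from weak$^*$ continuity of $\oZ$. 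Together these extend $U$ to all $t\ge0$ and give the Carath\'eodory and uniform control properties. The control function is inherited from $\frac{d\nu_t}{d\mu}$ applied to the total mass.
\item \emph{Representation.} The formula holds for simple star bodies $\rho_L=\sum t_i\uno_{A_i}$ by additivity. It then extends to all of $\starn$ by approximating $\rho_L$ monotonically with such functions and using continuity on both sides.
\end{enumerate}

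\emph{Main obstacle.} I expect the hard part to be steps 3 and 4: producing a single $\mu$-null exceptional set on which $U(t,s)$ is simultaneously a positive measure, monotone and continuous in $t$, and uniformly controlled. This requires the countable-dense reduction in both $g$ and $t$ together with a careful regularization, namely taking right limits in $t$ and using weak$^*$ compactness of bounded sets in $\measn$.
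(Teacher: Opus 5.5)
Your overall architecture matches the paper's: a single control measure, Radon--Nikodym densities for a countable dense set of $g$ and rational $t$, one exceptional null set, and extension by density. Two load-bearing steps fail, however.

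First, $\oZ$ is \emph{not} monotone. For $K\subseteq L$ one has $K\cup L=L$ and $K\cap L=K$, so your chain of equalities is a tautology and proves nothing. For instance, $\langle\oZ(L),g\rangle=\int_{\sn}\theta(\rho_L(s))g(s)\d\hn(s)$ with $\theta(t)=te^{-t}$ is admissible (Theorem~\ref{t:orlicz}). Yet $\oZ(2B^n)=2e^{-2}\hn$ is strictly smaller than $\oZ(B^n)=e^{-1}\hn$. The increasing nets of step~2 can be replaced by the canonical extension of \cite{TrViJMAA}, as the paper does. Step~4, however, collapses:
\begin{itemize}
\item $t\mapsto U(t,s)$ is not monotone, so right limits need not exist.
\item Weak$^*$ continuity of $\oZ$ only makes $t\mapsto d\nu_{t,g}/d\mu$ continuous into $L_1(\mu)$. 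This does not yield a version that is continuous in $t$ for $\mu$-a.e.\ $s$; think of $u_t=\chi_{[t,t+1]}$ on $\R$.
\end{itemize}
This pointwise continuity is the nontrivial core of the scalar theory. It is the modulus $\omega_\lambda(\delta)\to0$ in the Appendix, obtained by testing the uniformly continuous extension $\overline{\oV}_g$ on the mixed functions $\sum_i(\alpha_i\chi_{A_i^+}+\alpha_i'\chi_{A_i^-})$. The paper imports it as Lemma~\ref{l:weak pseudokernel}, so $K_p(\cdot,s)$ is already continuous for every $p\in\poly$ before any countable intersection is taken. You announce a reduction to the scalar theorem but never use its kernels at the point where they are needed.

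Second, your $\mu=\sum_k2^{-k}\nu_{t_k}/(1+\nu_{t_k}(\sn))$ cannot give uniform control. It dominates $\nu_{t_k}$ only with the constant $2^k(1+\nu_{t_k}(\sn))$, which blows up along rationals approaching a fixed $t$. Concretely, take $n=2$, an arc $I\subset S^1$ parametrized by arc length $s\in(0,1)$, and $t_*=\sqrt2$. Let $U(t,s)=\max\{0,1-|t-t_*|/s\}\,\delta_s$ for $s\in I$ (zero otherwise), integrated against arc length $\sigma$. This is admissible with $\zeta\equiv1$. With your choice, $d\mu/d\sigma=m(s)=\sum_k2^{-k}c_k\max\{0,1-|t_k-t_*|/s\}$ with $c_k\in[1/2,1]$. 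Each term vanishes for $s<|t_k-t_*|$, so $m(s)\to0$ as $s\to0^+$. Hence $d\nu_{t_*}/d\mu=1/m$ is $\mu$-essentially unbounded. Likewise, the step-3 densities at rationals $t_k\to t_*$ have essential suprema tending to infinity. Since $\langle U(t,\cdot),\uno\rangle$ must equal $d\nu_t/d\mu$ $\mu$-a.e., no locally bounded control function exists for your $\mu$.

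The paper instead builds $\mu$ from $\mu_{i,\uno}(G)=\sup\{\langle\oV(f),\uno\rangle\colon f\prec G,\ \|f\|_\infty\le i\}$. This is a supremum over \emph{all} functions bounded by $i$, so it dominates every $\nu_{\lambda,\uno}$ with $\lambda\le i$ simultaneously. It yields the non-decreasing $\zeta$ of \eqref{eq_cm} and Lemma~\ref{l:bounded kernel}. That local boundedness is also what your dominated-convergence arguments silently require, both in the sufficiency direction and in step~5.
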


\noindent
Here, $\rho_L$ is the radial function of $L$ (see Subsection \ref{ss:star} for the definition). We say that a map
$U\colon \rp \times \sn  \to \measn$ is \emph{$\mu$-strong Carath\'eodory} if the map $(\lambda, s)\mapsto \langle U(\lambda, s),g\rangle$ is strong Carath\'eodory (see Subsection \ref{ss:SC}) with respect to $\mu$ for every $g\in C(\sn)$ (with the pairing  defined in \eqref{eq:pairing}).
We call the map $U$ \emph{uniformly controlled} if  
there is a function $ \cm\colon \rp \to \rp$ such that
$$\vert \langle U(\lambda, s), g\rangle\vert \leq \|g\|_{\scriptscriptstyle \infty}\, \cm(\lambda)$$
for every $\lambda\in\rp$, every $s\in\sn$, and every $g\in C(\sn )$.  
\goodbreak

Valuations that intertwine rotations are particularly interesting. We call a valuation  
$\oZ\colon \starn \to \measn$ \emph{rotation equivariant} if 
$$\langle \oZ(\phi L),\phi g\rangle = \langle \oZ(L),g\rangle $$ 
for all $L\in\starn$, $g\in C(\sn)$, and $\phi\in\on$, the orthogonal group in $\rn$. 

As a consequence of Theorem \ref{th:rep}, we will  
prove the following  
classification of rotation equivariant valuations.
Let $\mathcal H^k$ denote $k$-dimensional Hausdorff measure.

\begin{theorem}\label{th:rep equivariant}
A map $\,\oZ\colon  \starn\to \measn$ is a weak$^{\,*}\!$ continuous and rotation equivariant valuation satisfying $\oZ(\{0\})=0 $ if and only if there exists a weak$^{\,*}\!$ continuous function $\rhot\colon  \rp \to M([-1,1])_{\scriptscriptstyle +}$ such that $\rhot_0=0$ and
\begin{align*}
\langle \oZ (L),g\rangle= \int_\sn \int_{[-1,1]}\radon g(s,\alpha)\d\rhot_{\rho_L(s)}(\alpha)\d\hn(s) 
\end{align*}
for every $L\in\starn$ and $g\in C(\sn)$. 
\end{theorem}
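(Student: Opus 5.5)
The plan is to derive Theorem~\ref{th:rep equivariant} from Theorem~\ref{th:rep}. Equivariance will force the pair $(\mu,U)$ to be replaceable by $(\hn,\tilde U)$ with $\tilde U(\lambda,\phi s)=\phi\,\tilde U(\lambda,s)$. Zonal symmetry of $\tilde U(\lambda,\pole)$ then yields $\rhot$. I take $\radon g(s,\alpha)$ to be the normalized average of $g$ over the $(n-2)$-sphere $\{t\in\sn: s\cdot t=\alpha\}$, equal to $g(\pm s)$ at $\alpha=\pm1$.

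\emph{Sufficiency.} Given $\rhot$, define $\oZ$ by the displayed formula.
\begin{itemize}
\item \emph{Valuation.} $\rho_{K\cup L}=\max(\rho_K,\rho_L)$ and $\rho_{K\cap L}=\min(\rho_K,\rho_L)$ pointwise, so the valuation identity holds for the integrand at every $s$ and hence after integration.
\item \emph{Nonnegativity and $\oZ(\{0\})=0$.} These follow from $\rhot_\lambda\ge0$ and $\rhot_0=0$.
\item \emph{Weak$^*$ continuity.} A weak$^*$ continuous map $\rhot$ has total masses $\rhot_\lambda([-1,1])$ bounded on compact $\lambda$-intervals. Also $\alpha\mapsto\radon g(s,\alpha)$ is continuous, jointly in $(s,\alpha)$. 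So if $\rho_{L_k}\to\rho_L$ uniformly, the inner integrals converge for every $s$, and dominated convergence applies.
\item \emph{Equivariance.} This follows from $\rho_{\phi L}(\phi s)=\rho_L(s)$, the identity $\radon(\phi g)(\phi s,\alpha)=\radon g(s,\alpha)$, and $\on$-invariance of $\hn$.
\end{itemize}

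\emph{Necessity, step 1: replacing $\mu$ by $\hn$.} Let $\oZ$ be as in Theorem~\ref{th:rep equivariant}, and take $\mu,U$ from Theorem~\ref{th:rep}. Fix $\lambda>0$ and $g\in C(\sn)$. For a Borel set $A\subseteq\sn$ consider
$$\nu_{\lambda,g}(A)=\int_A\langle U(\lambda,s),g\rangle\d\mu(s).$$
Approximate $\lambda\uno_A$ by radial functions of star bodies, using continuous functions equal to $\lambda$ on compact subsets of $A$ and vanishing off open neighbourhoods. The uniform control and strong Carath\'eodory property give
$$\nu_{\lambda,g}(A)=\lim_k\langle \oZ(L_k),g\rangle$$
along such approximations. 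Equivariance then yields $\nu_{\lambda,g}(\phi A)=\nu_{\lambda,\phi g}(A)$. Taking $g=1$, the measure $\nu_{\lambda,1}$ is $\on$-invariant, hence equals $c(\lambda)\hn$. Since every $\nu_{\lambda,g}$ is dominated by $\|g\|_\infty\nu_{\lambda,1}$, all $\nu_{\lambda,g}$ are absolutely continuous with respect to $\hn$. Hence the part of $\mu$ singular to $\hn$ may be discarded. Writing the remaining part as $h\,\hn$, set $\tilde U=hU$. This step is the main obstacle: justifying the approximation of $\lambda\uno_A$ and handling the null sets where the strong Carath\'eodory property fails. I would first do it for $A$ open, then extend by regularity and a monotone class argument.

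\emph{Necessity, step 2: equivariance of $\tilde U$.} From $\nu_{\lambda,g}(\phi A)=\nu_{\lambda,\phi g}(A)$ with the densities now with respect to $\hn$, we get $\tilde U(\lambda,\phi s)=\phi\,\tilde U(\lambda,s)$ for $\hn$-a.e.\ $s$. First do this for countably many $\lambda$, $g$ and $\phi$. Then extend by continuity in $\lambda$ (strong Carath\'eodory), density of a countable set in $C(\sn)$, and a Fubini argument over $\on$. After this, redefine $\tilde U(\lambda,s)=\phi_s\tilde U(\lambda,\pole)$ for any $\phi_s\in\on$ with $\phi_s\pole=s$. This does not change $\oZ$.

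\emph{Necessity, step 3: zonal structure and the measures $\rhot_\lambda$.} The equivariance relation at $s=\pole$ shows that $\tilde U(\lambda,\pole)$ is invariant under the stabilizer $\om$ of $\pole$, so the redefinition in step 2 is independent of the choice of $\phi_s$. Let $\rhot_\lambda$ be the pushforward of $\tilde U(\lambda,\pole)$ under $t\mapsto\pole\cdot t$. By $\om$-invariance and uniqueness of the invariant probability measure on each level sphere,
$$\langle\tilde U(\lambda,\pole),g\rangle=\int_{[-1,1]}\radon g(\pole,\alpha)\d\rhot_\lambda(\alpha).$$
Transporting this by $\phi_s$ gives the claimed formula for $\oZ$. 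Weak$^*$ continuity of $\lambda\mapsto\rhot_\lambda$ follows from continuity of $\lambda\mapsto\langle U(\lambda,s),g\rangle$ at a.e.\ $s$, which by equivariance holds at $s=\pole$. Finally, $\rhot_0=0$ follows from $U(0,s)=0$ for $\mu$-almost every $s$.
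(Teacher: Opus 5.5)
Your route is viable and genuinely different from the paper's, but one step does not go through as written.

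\textbf{How the paper proceeds.} The paper does not start from an arbitrary representing pair. Lemma~\ref{l:invarian_mu} shows that its specific control measure $\mu$, built from $\oV_{\uno}$, is already $\on$-invariant, hence a multiple of $\hn$, so no singular part has to be discarded. It then defines a new kernel at \emph{every} point by Lebesgue differentiation, $\langle U'(\lambda,s),g\rangle=\lim_{\rho\to0}\nu_{\lambda,g}(B(s,\rho))/\mu(B(s,\rho))$. This limit exists for all $g$ on a full-measure set $S_\lambda$, and at any other point because that point can be rotated into $S_\lambda$. Since $\nu_{\lambda,\phi g}(\phi A)=\nu_{\lambda,g}(A)$ and $\mu$ is invariant, $U'$ is exactly equivariant everywhere. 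Note that the correct identity is this one, not $\nu_{\lambda,g}(\phi A)=\nu_{\lambda,\phi g}(A)$ as you wrote. As a result, $\om$-invariance of $U'(\lambda,\pole)$ and continuity in $\lambda$ at every $s$ are automatic, and no base point has to be selected.

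\textbf{What your route buys.} Defining $\rhot_\lambda$ as the pushforward under $t\mapsto\pole\cdot t$ gives weak$^*$ continuity more directly than the paper's surjectivity argument for $g\mapsto\radon g(s,\cdot)$. Your direct dominated-convergence proof of sufficiency is a fine substitute for routing through Theorem~\ref{th:representation}.

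\textbf{The step that fails.} In step 3 you claim that ``the equivariance relation at $s=\pole$'' gives $\om$-invariance of $\tilde U(\lambda,\pole)$. What you actually have is $\tilde U(\lambda,\phi s)=\phi\,\tilde U(\lambda,s)$ for $\hn$-a.e.\ $s$ (for each $\phi$), or, after Fubini, for almost every pair $(\phi,s)$. A fixed point $\pole$ together with its Haar-null stabilizer is invisible to such statements. If $\pole$ is chosen badly, the redefinition $\tilde U(\lambda,s)=\phi_s\tilde U(\lambda,\pole)$ is neither well defined nor a.e.\ equal to the old kernel. Your order of steps is also circular: you redefine in step 2 before having the invariance that makes the redefinition well defined.

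\textbf{A repair.}
\begin{enumerate}
\item By Fubini, choose $\pole$ such that, for every rational $\lambda$, the set $E_\lambda=\{\phi\in\on:\tilde U(\lambda,\phi\pole)\neq\phi\,\tilde U(\lambda,\pole)\}$ is Haar-null. Also require that $\lambda\mapsto\tilde U(\lambda,\pole)$ is weak$^*$ continuous and $\tilde U(0,\pole)=0$; both hold at $\hn$-a.e.\ point.
\item Given $\psi\in\om$, pick $\phi\notin E_\lambda\cup E_\lambda\psi^{-1}$. Since $\phi\psi\pole=\phi\pole$, the relations for $\phi$ and for $\phi\psi$ together give $\psi\,\tilde U(\lambda,\pole)=\tilde U(\lambda,\pole)$. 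Continuity at $\pole$ extends this to all $\lambda$.
\item The map $\phi\mapsto\phi\pole$ pushes Haar measure to normalized $\hn$. Hence the redefined kernel agrees $\hn$-a.e.\ with $\tilde U(\lambda,\cdot)$ for rational $\lambda$. Since both are continuous in $\lambda$ at a.e.\ point, they agree for all $\lambda$ simultaneously, so $\oZ$ is unchanged.
\end{enumerate}
This choice of $\pole$ is also what justifies your closing claims about weak$^*$ continuity of $\rhot$ and $\rhot_0=0$. With this fix, or by adopting the paper's pointwise-limit kernel, your argument is complete.
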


\noindent
Here, $\radon g(s,\alpha)$ is the average of $g$ in the set $\{r\in\sn:\langle r,s\rangle=\alpha\}$ (see Subsection \ref{ss_rev}) and we also write $\rhot_\lambda$ for the measure $\rhot(\lambda)$.

Huang, Lutwak, Yang, and Zhang \cite{HLYZ} introduced the $q$-th dual area measure  $\tilde S_q(L)$ for $L\in\starn$  and $q\in\R$  within the dual Brunn--Minkowski theory. As a consequence of Theorem \ref{th:rep equivariant}, we obtain the following characterization.

\begin{theorem}\label{t:char}
A map $\oZ\colon \starn\to \measn$ is a weak$^{\,*}\!$ continuous, locally determined, rotation equi\-variant, $q$-homogeneous valuation if and only if there is $c\ge 0$ such that 
$\oZ=c\,\tilde{S_q}$. 
\end{theorem}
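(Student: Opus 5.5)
We will derive Theorem~\ref{t:char} from Theorem~\ref{th:rep equivariant}. For the \emph{if} direction, recall the representation of $\tilde S_q$ from \cite{HLYZ}:
$$\langle\tilde S_q(L),g\rangle=\frac1n\int_\sn g(\alpha_L(s))\rho_L(s)^q\d\hn(s),$$
where $\alpha_L$ is the radial Gauss map. On a general star body $\alpha_L$ is not defined, so we will use the version (as set up in the paper) that makes sense on $\starn$. Valuation property, $q$-homogeneity, rotation equivariance and locality then follow directly from the formula. Weak$^*$ continuity follows from radial continuity together with dominated convergence. Non-negativity is clear, so $c\,\tilde S_q$ with $c\ge0$ has all the stated properties.

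For the \emph{only if} direction, first note that $q$-homogeneity forces $\oZ(\{0\})=0$ when $q\neq0$. The case $q=0$ needs separate care: scaling gives $\oZ(\lambda L)=\oZ(L)$, and $\{0\}=\lim \lambda L$ as $\lambda\to0$, so $\oZ$ is constant. We would handle $\oZ-\oZ(\{0\})$ instead, or else rely on the paper's convention. Theorem~\ref{th:rep equivariant} then gives a weak$^*$ continuous $\rhot\colon\rp\to M([-1,1])_{\scriptscriptstyle +}$ with
$$\langle\oZ(L),g\rangle=\int_\sn\int_{[-1,1]}\radon g(s,\alpha)\d\rhot_{\rho_L(s)}(\alpha)\d\hn(s).$$

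Next we use homogeneity. Comparing $\oZ(\lambda L)$ with $\lambda^q\oZ(L)$ on bodies whose radial function is constant on a small cap, and letting the cap shrink, shows by a uniqueness argument that $\rhot_\lambda=\lambda^q\rhot_1$ for all $\lambda>0$. To get this uniqueness we will test on bodies $L$ with $\rho_L=\lambda$ on a set $A$ and $\varepsilon$ elsewhere, let $\varepsilon\to0$ using continuity and $\rhot_0=0$, and then vary $A$. The resulting identity $\int_A\int\radon g\,d\rhot_\lambda=\lambda^q\int_A\int\radon g\,d\rhot_1$ for all $A$ forces equality for a.e.\ $s$. Since $\radon g(s,\cdot)$ runs over enough functions of $\alpha$ (take $g$ zonal), this gives equality of the measures.

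Finally, locality should force $\rhot_1$ to be concentrated at $\alpha=1$. Here $\radon g(s,1)=g(s)$, and this yields $\oZ=c\,\tilde S_q$ with $c=n\,\rhot_1(\{1\})$. This step is the main obstacle, because it requires the precise meaning of ``locally determined'': the paper presumably defines it as $\oZ(K)\llcorner\omega=\oZ(L)\llcorner\omega$ whenever $\rho_K=\rho_L$ on an open $\omega$. Granting this, we take $K=\{0\}$-like (more precisely radius $\varepsilon\to0$) outside a small cap $C$ and radius $1$ on $C$, and compare with $L$ that is small everywhere. Their measures must agree on $\sn\setminus\overline C$. The difference is $\int_C\int\radon g(s,\alpha)\d\rhot_1(\alpha)\d s$, which must vanish for $g$ supported away from $\overline C$. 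For $s\in C$, $\radon g(s,\alpha)$ is nonzero only if $\alpha$ is bounded away from $1$, at least when $g$ sits outside a neighborhood of $C$. Choosing such $g\ge0$ that detect every $\alpha<1$ gives $\rhot_1([-1,1))=0$, so $\rhot_1=\rhot_1(\{1\})\delta_1$.
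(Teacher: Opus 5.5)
Your argument is correct in substance and has the same skeleton as the paper's, which derives Theorem~\ref{t:char} from Theorem~\ref{t:characterization}. There, homogeneity gives $\lambda^q$-scaling of the kernel (Corollary~\ref{c:hom}), proved by your own device: approximate $\lambda\chi_A$ by continuous functions, pass to the Borel extension, and invoke uniqueness. Locality gives $U'(\lambda,s)=\theta(\lambda)\,\delta_s$ (Lemma~\ref{l:locally determined}), which in your notation says exactly $\rhot_\lambda=\theta(\lambda)\,\delta_1$.

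The real difference is the locality step. The paper uses the explicit kernel $U'(\lambda,s)(g)=\lim_{\rho\to0}\nu_{\lambda,g}(B(s,\rho))/\mu(B(s,\rho))$ from its construction. Then \eqref{eq_ld2} kills $U'(\lambda,s)(g)$ at once when $s\notin\supp(g)$, and rotation equivariance makes the point mass independent of $s$. You use only the statement of Theorem~\ref{th:rep equivariant}. Take $\rho_K$ continuous, equal to $1$ on a cap $C$ and vanishing outside a slightly larger cap, and $g\ge0$ supported where $\rho_K=0$. The non-negative integrand $\langle\rhot_{\rho_K(s)},\radon g(s,\cdot)\rangle$ must then vanish a.e. Hence $\langle\rhot_1,\radon g(s,\cdot)\rangle=0$ at the centre of $C$ by continuity in $s$, and zonal $g$ about that centre give $\rhot_1([-1,\cos\delta))=0$ for every $\delta>0$. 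This costs a little bookkeeping but treats the representation theorem as a black box; the paper's version is shorter because the differentiation formula for $U'$ is already available.

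A few points need repair.
\begin{itemize}
\item The formula you quote with the radial Gauss map $\alpha_L$ is the dual curvature measure $\tilde C_q$ of \cite{HLYZ}, not $\tilde S_q$. The paper takes $\tilde S_q(L)(B)=\frac1n\int_B\rho_L^q\d\hn$, which is exactly what your `only if' argument produces. So $c=n\,\rhot_1(\{1\})$ is correct for this definition and would be wrong for the $\alpha_L$ formula, and the `if' direction becomes a one-line check.
\item In the homogeneity step your radial functions $\lambda\chi_A$ are not continuous. You must first extend the $\rhot$-formula to bounded Borel $f$ by dominated convergence, as in Corollary~\ref{co: extension}. This uses that $\lambda\mapsto\langle\rhot_\lambda,\G\rangle$ is continuous and that $\rhot_\lambda([-1,1])$ is bounded for $\lambda$ in bounded sets.
\item The $\varepsilon$-balls are unnecessary, since radial functions may vanish and $\oZ(\{0\})=0$.
\item For $q=0$ you still need to observe that the constant value $\oZ(\{0\})$ is a rotation invariant measure, hence a multiple of $\hn$, i.e.\ of $\tilde S_0$.
\item For $q<0$, your claim that $c\,\tilde S_q$ has all the stated properties fails when $c>0$, since $\tilde S_q(\{0\})$ is not a finite measure. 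Your own homogeneity step shows $\rhot_1=0$ in that case, because $\lambda^q\rhot_1=\rhot_\lambda\to0$ while $\lambda^q\to\infty$ as $\lambda\to0$. So only $c=0$ survives, a caveat the paper does not address either.
\end{itemize}
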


\noindent
Here, the map $\oZ$ is $q$-homogeneous if $\oZ(\lambda L)=\lambda^q\oZ(L)$ for every $\lambda>0$ and $L\in\starn$.
See Section \ref{ss:ld} for the definition of dual area measures and locally determined valuations on $\starn$. 

\goodbreak
We will establish our results in the setting of valuations on function spaces and identify $\starn$ with $\cfn$, the space of non-negative, continuous functions on $\sn$. We remark that the first classification result for valuations on function spaces was established for Sobolev functions \cite{ML2012}, followed by results on Lebesgue spaces, spaces of continuous functions, functions of bounded variation, and convex functions  (see \cite{Alesker, CLM2017, CLM2019, CLM2020, CLM2022, CLM2023, CLM2024, CLM2024b, CPTV2020, CPTV2021, Knoerr2024, Knoerr2024b, KnoerrUlivelli, LiL, LiMa, ML2011, mouamine2025klain, Mussnig2019, Mussnig2021, Mussnig2021b, TrViJMAA, TrViIntRep, TrViLattices, Tsang, Vi, Wang2014} for some recent results and the survey \cite{ML2023}). Measure-valued valuations were considered on convex functions on $\R^n$ in \cite{Knoerr2024b, LiL}. Our results establish a complete classification of measure-valued valuations that are weak$^*$~continuous on non-negative, continuous functions on $\sn$.  

The rest of the paper is structured as follows: In Section \ref{sectionnotation}, we present the notation and basic definitions used throughout the paper. In Section \ref{s:vvv}, we consider valuations on star bodies taking values in dual Banach spaces. Within this general framework and building on the scalar-valued case proven in \cite{TrViIntRep}, we derive some basic results and introduce the notion of a control measure for such valuations. In Section \ref{S:control}, given a weak$^*$ continuous valuation taking values in the space of measures $\measn$, we construct a control measure $\mu$. Proceeding as in  \cite{TrViIntRep}, we establish an integral representation for such a valuation with respect to the control measure and study some of its properties. Section \ref{S:Usection} is devoted to the proof of Theorem \ref{th:rep}. In Section \ref{S:equivariant}, we focus on valuations satisfying additional properties such as rotation equivariance, homogeneity, and local determination. There, we prove Theorems \ref{th:rep equivariant} and \ref{t:char}. In addition, we describe a valuation that is rotation equivariant and weak$^*$~continuous but not norm continuous. Finally, in the Appendix \ref{A:Inspection}, we include, for the reader's convenience, the details of an argument which is implicit in \cite{TrViIntRep}.

\section{Notation and Preliminaries}\label{sectionnotation}

Let $\sn$ be the unit sphere of the Euclidean space $\rn$, and let $C(\sn )$  denote the Banach space of continuous functions on $\sn$, endowed with the supremum norm 
$$\|f\|_{\scriptscriptstyle \infty}=\sup\{|f(s)|: {s\in \sn }\}.$$ 
We will denote the cone of non-negative functions in $C(\sn )$ by $\cfn$. Let $\Sigma_n$ be the Borel $\sigma$-algebra in $\sn $ and $B(\sn )$ the Banach space of bounded Borel measurable functions $g\colon\sn \to \R$, endowed with the supremum norm. We write $\bfn$ for the cone of non-negative functions in $B(\sn )$. Note that the formal inclusion of  $C(\sn )$ into $B(\sn )$ is a linear isometric embedding.  

\subsection{Star bodies}\label{ss:star} 
Given $x \in \rn$, let $[0,x]=\{\lambda x:0\leq \lambda\leq 1\}$ denote the line segment joining the origin with $x$. A set $L\subset \rn$ is a {\em star set} if $[0,x]\subset L$ for every $x\in L$. 
For a star set $L\subset \rn$, the {\em radial function} $\rho_L\colon\sn \to\rp$ is given by
$$
\rho_L(s)=  \sup \{\lambda\geq 0 : \lambda s\in L\}.
$$
A star set $L$ is called a {\em star body} whenever $\rho_L \in \cfn$. Conversely, for $f\in \cfn$, there exists a star body $L_f$ such that $f=\rho_{L_f}$.
Let $\starn$ denote the set of star bodies in $\rn$. Note that star bodies are always bounded. 

For $K,L\in \starn$, the {\em radial sum} $K \ra L$ is defined as the star body with radial function satisfying
$$
\rho_{K\ra L}=\rho_K+\rho_L.
$$
The \emph{radial metric} is defined by
$$
\delta(K,L)=\inf\{\varepsilon\geq 0 : K\subset L\ra  \varepsilon B^n, L\subset K\ra  \varepsilon B^n\},
$$
where $B^n$ denotes the Euclidean unit ball of $\rn$. It is easy to check that
$$
\delta(K,L)=\sup\{|\rho_K(s)-\rho_L(s)|: s\in \sn \}=\|\rho_K-\rho_L\|_{\scriptscriptstyle \infty}.
$$
We equip $\starn$ with the topology induced by the radial metric.

\subsection{Valuations on star bodies}
Given an additive abelian semigroup $X$, a function $\oZ\colon\starn\to X$ is a {\em valuation} if 
\begin{equation}\label{eq:valuation}
\oZ(K\cup L)+\oZ(K\cap L)=\oZ(K)+ \oZ(L)
\end{equation}
for all $K,L\in\starn$.

Note that if $K, L \in\starn$, then both $K\cup L$ and $K\cap L$ are in $\starn$. It is easy to see that
$$
\rho_{K\cup L}=\rho_K\vee \rho_L, \hspace{1cm} \rho_{K\cap L}=\rho_K\wedge \rho_L,
$$
where for functions $f, g\colon\sn \to \R$ and $s\in \sn $, we set
$$
(f\vee g)(s)=\max \{f(s), g(s)\},
$$
$$
(f\wedge g)(s)=\min \{f(s), g(s)\}.
$$
Therefore, we can identify  the valuation  $\oZ\colon\starn\rightarrow X$  with the valuation (in the function space sense)  $ \oV\colon \cfn\rightarrow X$,
with the natural identification given by
\begin{equation}\label{eq_star}
 \oV(f)=\oZ(L_f),
\end{equation}
where $L_f$ is the star body with radial function $\rho_{L_f}=f$. In the function space setting, the valuation property \eqref{eq:valuation} becomes
\begin{equation}\label{eq:valuationfunctions}
\oV(f\vee g)+\oV(f\wedge g)=\oV(f)+ \oV(g)
\end{equation}
for every $f,g \in \cfn$.

\subsection{Measure-valued valuations}\label{ss:measure_valued}
General valuations are often hard to describe, so continuity assumptions are used to single out the most relevant classes of valuations. In this paper, we focus on weak$^*$~continuous valuations on star bodies taking values in the space of measures on the sphere.

Let $M(\sn )$ be the space of signed Radon measures on $\sn $, equipped with the variation norm $\|\mu\|=|\mu|(\sn )$. The classical Riesz representation theorem (cf.\ \cite[Theorem 7.2.8]{C}) states that $M(\sn )$ can be identified with the Banach space dual of $C(\sn )$. The duality pairing is given by 
\begin{equation}\label{eq:pairing}
\mu(f)=\langle \mu, f\rangle=\int_{\sn } f(s) \d\mu(s)
\end{equation}
for $f\in C(\sn )$ and $\mu\in M(\sn )$.

\goodbreak
We recall that, given a Banach space $X$, its dual $X^*$ consists of all bounded linear functionals $x^*\colon X\to \R$, and the weak$^*$ topology in $X^*$ is defined as the coarsest topology making continuous the evaluation on points of $X$, in other words, the topology induced by neighborhoods of the form $$\{x^*\in X^*:\max_{1\leq i\leq m} |x^*(x_i)-x_0^*(x_i)|<\varepsilon\}$$ for every $(x_i)_{i=1}^{m}\subset X$, $x_0^*\in X^*$ and $\varepsilon>0$.

We say that a valuation $\oV\colon C(\sn )\rightarrow M(\sn )$ is \emph{weak$^{\,*}$ continuous} if it is continuous when we consider the norm topology in $C(\sn )$ and the weak$^*$ topology in $M(\sn )$, or, equivalently, if the scalar-valued valuation 
$\oV_g\colon C(\sn )\rightarrow \R,$
defined  by 
$$\oV_g(f)=\langle \oV(f),g\rangle=\int_{\sn } g(s)\d \oV(f)(s),$$
is continuous for every $g\in C(\sn )$.

We denote the orthogonal group in dimension $n$ by $\on$. 
Given a function $f\colon \sn \to \R$ and $\phi \in \on$, we will often need the rotation of $f$ by $\phi$, which we will denote by $\phi f= f\circ \phi^{-1}$. 

Similarly, given $A\in \Sigma_n$, the rotation of $A$ by $\phi$ corresponds to the Borel set $\phi A=\{\phi(t)\colon t\in A\}$, which in terms of the corresponding characteristic functions $\chi_A$ can be written as $\phi\chi_{A} = \chi_{\phi A}$. 

We say that a valuation $\oV\colon C(\sn )\to M(\sn )$ is {\em rotation equivariant} if 
$$\oV(\phi f)(\phi B)=\oV(f)(B)$$
for every $f\in C(\sn )$, $B\in \Sigma_n$, and $\phi\in \on$;  
equivalently, by the Riesz representation theorem,
$$\langle \oV(\phi f), \phi g\rangle =\langle \oV(f),g\rangle$$
for every $f,g \in C(\sn )$ and $\phi\in \on$.

It is convenient to work with valuations defined on the cone of non-negative, continuous functions, $\cfn$. We remark that there is more than one way to extend a valuation $\oV\colon \cfn\to M(\sn )$ to a map defined on the whole space $C(\sn )$: define $\oV^\pm\colon C(\sn )\to M(\sn )$ by 
$$\oV^\pm(f)=\oV(f\vee 0)\pm \oV((-f)\vee 0).$$
It is straightforward to check that $\oV^\pm$ also satisfies the valuation property \eqref{eq:valuationfunctions}. Conversely, given a valuation  $\oV\colon C(\sn )\to M(\sn )$, we define the maps $\oV_\pm\colon\cfn\to M(\sn )$  by $$\oV_\pm(f)=\oV(\pm f).$$
It is clear that $\oV_\pm$ are valuations. Assuming that $\oV(0)=0$, we can always write
\begin{equation}\label{e:decomp}
\oV(f)=\oV_+(f_{\scriptscriptstyle +})+\oV_-(f_{\scriptscriptstyle-}),
\end{equation}
where $f_{\scriptscriptstyle +}=f\vee 0$ and $f_{\scriptscriptstyle-}=(-f)\vee0$.

We remark that the classification results that we will derive for valuations on $\cfn$ can be immediately extended to classification results for  valuations on $C(\sn)$ with values in the space of signed Radon measures on $\sn$ such that $\oV(0)=0$ and $\oV(f)\in\measn$ for non-negative $f\in C(\rn)$ using the above decomposition. Therefore, in this work, we will focus on valuations $\oV:C(\sn)_{\scriptscriptstyle +}\to M(\sn)_{\scriptscriptstyle +}$ which satisfy $\oV(0)=0$.

\subsection{Strong Carath\'eodory property}\label{ss:SC} The following notion appeared in \cite{DrewOrlicz}. Let $\mu\in M(\sn)$ be given. A map
$K\colon \rp \times \sn  \to \R$ is \emph{$\mu$-strong Carath\'eodory} if the mapping $s\mapsto K(\lambda, s)$ is Borel measurable on $\sn$ for every $\lambda \in\rp$ and there exists a Borel set $S_0\subset\sn$ with $\mu(S_0)=\mu(\sn)$ such that for every $s\in S_0$ the mapping $\lambda\mapsto K(\lambda,s)$ is continuous on $\rp$. If $K$ is $\mu$-strong Carath\'eodory on $ \rp \times \sn$, then, for every $f\in C(\sn)$, the mapping
\[
    s \mapsto K(f(s), s)
\]
is Borel measurable. 

In the sequel we will need to extend this definition to cover the case of functions $U:\rp\times \sn\to M(\sn)_+$ where in $M(\sn)_+$ we consider the weak$^*$ topology. We will say that $U$ thus defined is $\mu$-strong Carath\'eodory if, for every $g\in C(\sn)$, the map $U_g \colon \rp \times \sn  \to \R$ given by $U_g(\lambda, s) = U(\lambda, s)(g)$ is $\mu$-strong Carath\'eodory.

\section{Vector-Valued Valuations on Star Bodies} \label{s:vvv}

This paper aims to classify measure-valued valuations defined on star bodies. Still, some preliminary results are better understood when stated and proved in slightly greater generality. 

In this section, we will focus on valuations defined on $C(\sn)$ with values on an arbitrary dual Banach space $X^*$ (with the dual norm $\Vert \cdot\Vert_*$), that is, valuations $V:C(\sn)\to X^*$. In the next sections we will  restrict to the case $X^*=M(\sn )$.

We say that a valuation is $\oV\colon C(\sn )\to X^*$ is {\em bounded on bounded sets} if, for every $\lambda>0$, there exists $\gamma\geq 0$ such that
$$\|\oV(f)\|_*\leq \gamma$$
for every $f \in C(\sn )$ with $\|f\|_{\scriptscriptstyle \infty}\leq \lambda$.

By \cite[Lemma 3.1]{TrViJMAA}, every continuous, scalar-valued valuation on $C(\sn )$ is bounded on bounded sets. It follows from this fact and the Uniform Boundedness Principle, that weak$^*$~continuous valuations are bounded on bounded sets:

\begin{proposition}\label{p:bbs}
Let $X^*$ be a dual Banach space. If  $\,\oV\colon C(\sn )\to X^*$ is a weak$^*$~continuous valuation, then $\oV$ is bounded on bounded sets. 
\end{proposition}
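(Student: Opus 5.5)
The plan is to reduce to the scalar case through the canonical embedding of $X$ into the dual of $X^*$, and then to apply the Uniform Boundedness Principle on the Banach space $X$. Fix $\lambda>0$ and set
$$
\mathcal F_\lambda=\{\oV(f): f\in C(\sn),\ \|f\|_{\scriptscriptstyle \infty}\le \lambda\}\subset X^*.
$$
We need to show that $\mathcal F_\lambda$ is norm bounded in $X^*$.

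For each $x\in X$, consider the scalar map $\oV_x\colon C(\sn)\to\R$ given by $\oV_x(f)=\langle \oV(f),x\rangle$. Since pairing with a fixed $x$ is linear, $\oV_x$ inherits the valuation property \eqref{eq:valuationfunctions} from $\oV$. Weak$^*$ continuity of $\oV$ means exactly that each $\oV_x$ is continuous with respect to the norm on $C(\sn)$. So each $\oV_x$ is a continuous, scalar-valued valuation on $C(\sn)$. By \cite[Lemma 3.1]{TrViJMAA}, it is bounded on bounded sets: there is $\gamma_x\ge0$ with $|\langle \oV(f),x\rangle|\le\gamma_x$ for all $\|f\|_{\scriptscriptstyle \infty}\le\lambda$.

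This says that the family of bounded linear functionals $\mathcal F_\lambda$ on the Banach space $X$ is pointwise bounded. The Uniform Boundedness Principle then gives $\sup_{x^*\in\mathcal F_\lambda}\|x^*\|_*=:\gamma<\infty$, which is the claim.

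The only point needing care is that the UBP is applied on $X$, where completeness is available, rather than on $X^*$. Accordingly, weak$^*$ continuity is used only through evaluations at elements of $X$, and that is precisely what the definition provides. Everything else is immediate from the cited scalar lemma, so I expect no real obstacle.
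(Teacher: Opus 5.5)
Your proposal is correct and follows the paper's proof essentially verbatim: scalarize via $\oV_x=\langle \oV,x\rangle$ for $x\in X$, invoke \cite[Lemma 3.1]{TrViJMAA} to get pointwise boundedness of $\{\oV(f)\colon \|f\|_{\scriptscriptstyle \infty}\le\lambda\}$ on $X$, and conclude by the Uniform Boundedness Principle on the Banach space $X$. Your explicit remark that $\oV_x$ inherits the valuation property by linearity of the pairing is a detail the paper leaves implicit.
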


\begin{proof} Given $\lambda>0$, define $\mathcal M\subset X^*$ by $\mathcal M=\{\oV(f)\colon\|f\|_{\scriptscriptstyle \infty}\leq \lambda\}$. For every $x\in X$, the scalar-valued valuation $\oV_x\colon C(\sn )\to \R$, given by $\oV_x(f)=\langle \oV(f),x\rangle$, is continuous. Therefore, it follows from  \cite[Lemma 3.1]{TrViJMAA} that 
$$
\sup\{|\langle x^*,x\rangle|:x^*\in \mathcal M\}<\infty.
$$
Since this holds for every $x\in X$, it follows from the Uniform Boundedness Principle (cf.\ \cite[Theorem 3.15]{fabianetal}) that $\mathcal M$ is bounded in norm.
\end{proof}

The main tools to describe and classify continuous, scalar-valued valuations on star bodies in \cite{TrViJMAA, TrViIntRep, Vi} came from Measure Theory. Specifically, the Riesz representation theorem for linear functionals on $C(\sn )$ was used several times. 

A key idea in those papers was to use the fact that the values of a valuation on continuous functions which approximate pointwise a characteristic function of the form $\lambda \chi_A$, with $\lambda\in\R_{\scriptscriptstyle +}$ and $A\in\Sigma_n$, can be used to define a family of measures $\omega_\lambda$ on $\Sigma_n$ which depend continuously on $\lambda$. This is essentially the same as extending a valuation $\oW\colon C(\sn )\to \R$ to a valuation $\overline{\oW}\colon B(\sn )\to \R$ in a canonical way.  

In this paper, we will use the vector-valued version of this approach. Let us start by considering scalar-valued valuations obtained by composing with elements in $X$.

Given a weak$^*$~continuous valuation $\oV\colon C(\sn )\to X^*$ and $x\in X$, let $\overline{\oV}_x\colon B(\sn )\to \R$ be the canonical extension of $\oV_x=\langle \oV,x\rangle$ to the space of bounded Borel functions as constructed in \cite{TrViJMAA}. For completeness, we outline the extension construction here without providing proofs of the statements. Details can be found in \cite{TrViJMAA}.

First, by \cite[Theorem 1.3]{TrViJMAA}, every valuation $\oW\colon C(\sn )\to \R$ can be written as a difference of non-negative valuations, that is, there exist non-negative valuations $\oW^+$ and $\oW^-$ such that  $\oW=\oW^+- \oW^-$ (however, this decomposition need not be unique). The following construction defines a Radon measure for every $\lambda > 0$ and each sign $\pm$: For $A\in\Sigma_n$, we set
\begin{equation}\label{eq:def nu lambda+}
\omega_{\lambda}^{\pm}(A)=
\inf_{\substack{\\ G \text{ open} \\ A\subset G}}\,\,
\sup_{\substack{K \text{ closed}\\[1pt]
K\subset G}} \inf\Big\{\oW^{\pm}(f)\colon f\in C(\sn ),\,\chi_K\leq \frac{f}{\lambda} \leq \chi_G \Big\},
\end{equation}
and 
\begin{equation}\label{eq:def nu lambda}
    \omega_{\lambda}=\omega^+_{\lambda}-\omega^-_{\lambda}.
\end{equation}

For a simple function $g=\sum_{i=1}^m \lambda_i\chi_{A_i}$ with pairwise disjoint $A_i\in \Sigma_n$ and $\lambda_i >0$, we define 
$$\overline{\oW}(g)=\sum_{i=1}^m\omega_{\lambda_i}(A_i).$$ 
This definition admits a unique continuous extension to $B(\sn )$, and it can be checked that 
$\overline{\oW}(f)=\oW(f)$ for every $f\in C(\sn )$ (see \cite[Theorem~6.1]{TrViJMAA}). Note that the extended valuation is continuous (with respect to the norm $\|\cdot\|_{\scriptscriptstyle \infty}$). In particular, the following identity will be useful (see \cite[Lemma~5.6]{TrViJMAA}): for every closed set $C\subset \sn $ and $\lambda \ge 0$, we have
\begin{equation}\label{limitrims}
\omega_{\lambda}(C)=\lim_{i\to \infty}  \oW(f_i)
\end{equation}
whenever $f_i\in \cfn$ are such that $\|f_i\|_{\scriptscriptstyle \infty}\leq \lambda$,  $f_i(s)=\lambda$ for $s\in C$, and $f_i(s)=0$ when $\dist(s,C)\geq1/i$ (where $\dist(s,C)$ is the infimum of the spherical distances of $s$ and $t\in C$).

\goodbreak
Next, we show that the extension procedure is linear. 

\goodbreak
\begin{lemma}\label{l:ABextension}
For valuations $\oW\colon C(\sn )\to \R$, the extension procedure from $\oW$ to $\overline{\oW}\colon B(\sn )\to \R$ is linear in $\oW$, that is, 
$$\overline{\alpha \oW_1+\beta \oW_2}=\alpha \overline{\oW_1}+ \beta \overline{\oW_2}$$
for every $\alpha, \beta \in \R$ and scalar-valued valuations $\oW_1, \oW_2$ on $C(\sn)$.
\end{lemma}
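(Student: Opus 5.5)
Since $\overline{\oW}$ is determined by its values on simple functions, and these are built from the measures $\omega_\lambda$, it suffices to prove that the family $(\omega_\lambda)_{\lambda\ge0}$ attached to $\oW$ depends linearly on $\oW$. That is, if $\omega_\lambda^{(1)},\omega_\lambda^{(2)}$ and $\omega_\lambda$ are the measures constructed from $\oW_1$, $\oW_2$ and $\oW=\alpha\oW_1+\beta\oW_2$, we want
$$\omega_\lambda=\alpha\,\omega^{(1)}_\lambda+\beta\,\omega^{(2)}_\lambda \quad\text{for every }\lambda\ge 0.$$
Granting this, for a simple function $g=\sum_i\lambda_i\chi_{A_i}$ the definition gives $\overline{\oW}(g)=\alpha\overline{\oW_1}(g)+\beta\overline{\oW_2}(g)$. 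Simple functions are dense in $B(\sn)$, and $\overline{\oW}$, $\overline{\oW_1}$, $\overline{\oW_2}$ are the unique continuous extensions from simple functions. Hence $\overline{\oW}$ and the continuous map $\alpha\overline{\oW_1}+\beta\overline{\oW_2}$ coincide on a dense set, and therefore everywhere.

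To prove the identity for the measures, I would not work with the inf–sup formula \eqref{eq:def nu lambda+}. That formula depends on a non-unique Jordan-type decomposition $\oW=\oW^+-\oW^-$, and linearity there is not transparent. Instead I would use \eqref{limitrims}. For a closed set $C$ and $\lambda\ge0$, choose $f_i\in\cfn$ as in \eqref{limitrims}, for instance $f_i(s)=\lambda\max\{0,1-i\,\dist(s,C)\}$. The same sequence works simultaneously for $\oW_1$, $\oW_2$ and $\oW$, so
$$\omega_\lambda(C)=\lim_i \oW(f_i)=\alpha\lim_i\oW_1(f_i)+\beta\lim_i\oW_2(f_i)=\alpha\,\omega^{(1)}_\lambda(C)+\beta\,\omega^{(2)}_\lambda(C).$$
Both limits on the right exist by \eqref{limitrims} applied to $\oW_1$ and $\oW_2$. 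The point is that \eqref{limitrims} holds regardless of which decomposition $\oW^\pm$ was used, so the right-hand side is intrinsic.

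Both sides are finite signed Radon measures on $\sn$: $\omega_\lambda$ is a difference of the Radon measures $\omega^\pm_\lambda$, and similarly for the $\omega^{(j)}_\lambda$. Two finite signed Radon measures that agree on all closed sets agree on all Borel sets. Closed sets form a $\pi$-system generating $\Sigma_n$, so this follows from Dynkin's $\pi$–$\lambda$ theorem applied to the positive and negative parts, or from regularity. The case $\lambda=0$ is trivial, since all the measures vanish there.

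The main obstacle is conceptual rather than computational. One must make sure the extension is well defined independently of the chosen decomposition $\oW=\oW^+-\oW^-$, because otherwise "linearity" is not even meaningful. The route through \eqref{limitrims} settles this, since it expresses $\omega_\lambda(C)$ on closed sets directly in terms of $\oW$. The remaining care is to confirm that the uniqueness of the continuous extension from simple functions, quoted from \cite{TrViJMAA}, applies to each of the three valuations.
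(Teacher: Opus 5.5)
Your proposal is correct and follows essentially the same route as the paper. You evaluate $\omega_\lambda(C)$ on closed sets via \eqref{limitrims}, using one approximating sequence for all three valuations, and extend to Borel sets by regularity; your Dynkin-system alternative works equally well, applied directly to the finite signed measure $\omega_\lambda-\alpha\,\omega^{(1)}_\lambda-\beta\,\omega^{(2)}_\lambda$. You then conclude by linearity on simple functions together with density and continuity.
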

\begin{proof}
For every $\lambda \in \R_{\scriptscriptstyle +}$, let $\omega^1_\lambda$ and $\omega^2_\lambda$ be the measures induced by $\oW_1$ and $\oW_2$, respectively, in the procedure described above, and let $\omega_\lambda$ be the measure induced by $\alpha \oW_1+\beta \oW_2$. For every closed set $C$ and every sequence $(f_i)_{i\in\mathbb N}$ in $\cfn$ such that  $\|f_i\|_{\scriptscriptstyle \infty}\leq \lambda$, $f_i(s)=\lambda$ for $s\in C$ and $f_i(s)=0$ for $\dist(s,C)\geq1/i$, we obtain by \eqref{limitrims} that
\begin{align*}
\omega_\lambda(C)&=\lim_{i\rightarrow \infty} (\alpha \oW_1+\beta \oW_2)(f_i)\\ &=\alpha \lim_{i\rightarrow \infty} \oW_1(f_i)+\beta \lim_{i\rightarrow \infty} \oW_2(f_i)=\alpha\, \omega^1_\lambda(C)+\beta\, \omega^2_\lambda(C).
\end{align*}
By the regularity of the measures involved, the same identity holds for arbitrary sets in $\Sigma_n$. Hence, the linearity of the extension procedure holds on simple functions. The result now follows from the continuity of the extended valuations and density of simple functions in $B(\sn)$.
\end{proof}

In the following, we introduce  scalar measures associated to a weak$^*$~continuous valuation $\oV\colon C(\sn )\to X^*$ with $\oV(0)=0$. For $\lambda\in \R_{\scriptscriptstyle +}$ and $x\in X$, let 
\begin{equation}\label{eq:defnulambda}
    \nu_{\lambda,x}= \omega_\lambda
\end{equation} 
be the scalar-valued measure associated to $\oW=\langle\oV,x\rangle$ in \eqref{eq:def nu lambda}. 

Now, given $A\in\Sigma_n$, we consider the linear and continuous mapping $x\mapsto \nu_{\lambda,x}(A)$. The linearity of this mapping follows from Lemma \ref{l:ABextension} and the continuity from the definition of $\nu_{\lambda,x}(A)$ and Proposition \ref{p:bbs}. Note also, that for a fixed $x\in X$ the mapping $A\mapsto \nu_{\lambda,x}(A)$ is a measure on $\Sigma_n$. 
\goodbreak
We will use the following notion of control measure.  
\begin{definition}\label{d:Mcontrol}
Let $\oV:C(\sn)\to X^*$ be a weak$^*$ continuous valuation. For every $x\in X$ and $\lambda\geq0$, let $\nu_{\lambda,x}:\Sigma_n\to\mathbb{R}$ denote the associated (and previously defined) measures. A scalar measure $\mu:\Sigma_n\to\mathbb{R}$ is said to be a control measure of $\oV$ if there is $\zeta:\mathbb{R}_{\scriptscriptstyle +}\to\mathbb{R}_{\scriptscriptstyle +}$, called control function, such that
\begin{equation}\label{i:controlmeasure}
        |\nu_{\lambda,x}(A)|\leq \|x\| \zeta(\lambda)\mu(A)
\end{equation}
for every $\lambda\geq0$, $A\in \Sigma_n$ and $x\in X$, where $\|x\|$ denotes the norm of $x$ in $X$.
\end{definition}

\section{Construction of a control measure and further results from the scalar case}\label{S:control}
Throughout the remainder of the paper we restrict to the case where $X=C(\sn)$ and denote by $\|\cdot \|$ the dual norm on $M(\sn)$, dropping the subscript $*$. A key difference with respect to the scalar case is that while every scalar valuation can be written as the difference of two non-negative valuations, this property fails for vector-valued valuations. To see this, it suffices to consider a valuation defined by a continuous linear operator $T:C(S^{n-1})\to M(S^{n-1})$  that is not absolutely summing (hence, it cannot be written as a difference of positive linear operators, see \cite[Chapter 6]{Diestel-Faires}). Thus, from now on we focus on 
valuations $\oV:C(\sn)_{\scriptscriptstyle +}\to M(\sn)_{\scriptscriptstyle +}$ that are weak$^*$ continuous and satisfy $\oV(0)=0$. 

\goodbreak
In previous works \cite{TrViJMAA,TrViIntRep}, a control measure was constructed for scalar valuations. Let us first recall this construction. Given a non-negative 
valuation $\oW:C(\sn)\to\mathbb{R}_{\scriptscriptstyle +}$ and $\lambda_0\geq0$, a Radon measure $\mu_{\lambda_0}:\Sigma_n\to\mathbb{R}$ can be defined such that on open sets $G\subset \sn$ 
\begin{equation}\label{eq:control measure}
\mu_{\lambda_0}(G)=\sup\{\oW(f)\colon  f\prec G, \, \|f\|_{\scriptscriptstyle \infty}\leq \lambda_0\}, 
\end{equation}
where $f\prec G$ means that the support of $f$ is contained in $G$.  

The key property of $\mu_{\lambda_0}$, and the one needed for the arguments in the previous works \cite{TrViJMAA, TrViIntRep}, is its ``control'' property: for every $A\in\Sigma_n$ and $f\in C(\sn)$,
\begin{equation}\label{eq:control}
 f\prec A \,\text{ and }\,\|f\|_{\scriptscriptstyle \infty}\leq\lambda_0\,\text{ imply }\,|\oW(f)|\leq \mu_{\lambda_0}(A). 
\end{equation}
Suppose that $f\in \cfn$. Let us consider the constant one function $\uno$. It follows from the positivity of $\oV$ that
$$
0\leq \langle \oV(f),g\rangle\leq \langle \oV(f),\|g\|_{\scriptscriptstyle\infty}\uno\rangle
$$ 
for $g\ge0$.
Hence, 
\begin{equation}\label{Eq:inequality}
|\oV_g(f)|=|\langle \oV(f),g\rangle|=|\langle \oV(f),g_+-g_-\rangle|\leq \langle \oV(f),|g|\rangle\leq \oV_{\uno}(f)\,\|g\|_{\scriptscriptstyle \infty}
\end{equation}
for $g\in C(\sn )$.

Let $\mu_{\lambda_0,\uno }$ be the measure defined in \eqref{eq:control measure} associated to the scalar-valued valuation $\oV_{\uno}=\langle \oV, \uno\rangle$. It follows from \eqref{Eq:inequality} that for every $g\in C(\sn)$, $f\in\cfn$ and $A\in\Sigma_n$ with $\|f\|\leq \lambda_0$ and $f\prec A$, we obtain
\[
    |V_g(f)|\leq V_{\uno} (f)\|g\|_{\scriptscriptstyle \infty} \leq \mu_{\lambda_0,\uno} (A)\|g\|_{\scriptscriptstyle \infty}.
\]
In order to handle arbitrarily large functions $f\in C(\sn )$, we will use the following measure:
\begin{equation}\label{eq_deK_mu}
\mu=\sum_{i=1}^\infty\frac{\mu_{i,\uno}}{2^i\|\mu_{i,\uno}\|},    
\end{equation}
where $\|\mu_{i,\uno}\|$ denotes the variation norm. Note that the above series is absolutely convergent and, therefore, defines a proper, non-negative, regular measure. Although the measure $\mu$ may be defined on a larger $\sigma$-algebra, we restrict to Borel sets, since this is sufficient for our purposes. 

Now let us see that the measure $\mu$ is a control measure in the sense of \eqref{i:controlmeasure}. It follows easily that if $f\in C(\sn)_{\scriptscriptstyle +}$, $g\in C(\sn)$ and $A\in\Sigma_n$ with $f\prec A$ one has
\begin{equation}\label{Eq:relating}
    |V_g(f)|\leq \|g\|_{\scriptscriptstyle \infty} \mu_{m,\uno}(A) \leq \|g\|_{\scriptscriptstyle \infty} 2^m \|\mu_{m,\uno}\|\mu(A)
\end{equation}
for every $m\in\mathbb{N}$ such that $\|f\|_{\scriptscriptstyle \infty}\leq m$. Define now the function $\zeta$ as follows 
\begin{equation}\label{eq_cm}
\cm(t)=2^{\lceil t\rceil}\|\mu_{\uno, \lceil t\rceil}\|.
\end{equation}
where $\lceil t\rceil=\min\{m\in\mathbb{Z}:t\leq m\}$ denotes the ceiling function. 

Finally, to check that $\mu$ is a control measure we have to verify inequality \eqref{i:controlmeasure}. Let $C\subset \sn$ be a closed set, let $\lambda> 0$ and let $g\in C(\sn)$. Consider a sequence $\{f_m\}_{m\in\mathbb N}\subset\cfn$ with  $\|f_m\|_{\scriptscriptstyle \infty}\leq \lambda$, $f_m(s)=\lambda$ for $s\in C$ and $f_m(s)=0$ whenever $\dist(s,C)\geq1/m$. By \eqref{limitrims},
\[
    |\nu_{\lambda,g}(C)|=|\overline\oV_g(\lambda \chi_C)|=\lim_{m\to\infty}|\oV_g(f_m)|\leq \|g\|_{\scriptscriptstyle \infty} \zeta(\lambda)\mu(C).
\]
The regularity of the measures implies now that the same inequality holds for Borel sets. That is, for every $A\in\Sigma_n$, 
\[
    |\nu_{\lambda,g}(A)|\leq \|g\|_{\scriptscriptstyle \infty} \zeta(\lambda)\mu(A).
\]
Hence, $\mu$ is a control measure with $\zeta$ as the control function. 
\goodbreak

We state for further reference the following trivial fact about the control function:

\begin{observation}\label{o:zeta}
$\zeta$ is non-decreasing and constant on each interval $[m-1,m)$, $m\in\mathbb{N}$.
\end{observation}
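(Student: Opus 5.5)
The Fact follows directly from the definition \eqref{eq_cm}, $\zeta(t)=2^{\lceil t\rceil}\|\mu_{\uno,\lceil t\rceil}\|$. The plan is to show that $\zeta$ depends on $t$ only through $\lceil t\rceil$, and that the map $m\mapsto 2^m\|\mu_{\uno,m}\|$ is non-decreasing in $m\in\mathbb N$.

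\emph{Piecewise constancy.} On each interval between consecutive integers the ceiling function is constant, so $\zeta$ is constant there as well. The paper's statement uses the interval $[m-1,m)$. With the literal ceiling convention, $\lceil m-1\rceil=m-1$, so the interval on which $\lceil\cdot\rceil$ is constant is really $(m-1,m]$. For the later uses only monotonicity together with local constancy between integers is needed, so I would record that $\zeta$ is constant on each $(m-1,m]$. The half-open convention in the statement is harmless once monotonicity is known, since the controlling inequalities only require an upper bound. Alternatively, one can replace $\lceil t\rceil$ by $\lfloor t\rfloor+1$ in \eqref{eq_cm}. This is still a valid control function, because $\zeta$ enters \eqref{i:controlmeasure} only as an upper bound and a larger $\zeta$ preserves the inequality.

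\emph{Monotonicity.} It suffices to show $\|\mu_{\uno,m}\|\le\|\mu_{\uno,m+1}\|$; the factor $2^m$ is increasing. I would use \eqref{eq:control measure} with $G=\sn$. Since $\oV_{\uno}$ is non-negative and the measures are non-negative, $\|\mu_{\lambda_0,\uno}\|=\mu_{\lambda_0,\uno}(\sn)=\sup\{\oV_{\uno}(f):\|f\|_\infty\le\lambda_0\}$. The admissible set of functions $f$ grows with $\lambda_0$, so this supremum is non-decreasing in $\lambda_0$. This settles monotonicity.

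The only point that needs care is the boundary convention discussed above; the rest is immediate. One further detail: if some $\|\mu_{\uno,m}\|=0$, the normalization in \eqref{eq_deK_mu} must drop that term, but this does not affect monotonicity.
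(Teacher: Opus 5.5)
Your argument is correct and matches the reasoning the paper leaves implicit. $\zeta$ depends on $t$ only through $\lceil t\rceil$, and $\|\mu_{m,\uno}\|=\mu_{m,\uno}(\sn)=\sup\{\oV_{\uno}(f)\colon f\in\cfn,\ \|f\|_{\scriptscriptstyle\infty}\le m\}$ is non-decreasing in $m$. You are also right that, with $\lceil\cdot\rceil$ as defined, $\zeta$ is constant on $(m-1,m]$ rather than $[m-1,m)$.

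Adopt your $\lfloor t\rfloor+1$ redefinition (a larger, hence still admissible, control function) rather than just recording $(m-1,m]$. The $[m-1,m)$ form is what makes $\zeta$ right-continuous, and Section~\ref{S:Usection} uses that to get $\limsup_m\zeta(\lambda_m)\le\zeta(\lambda)$. With the left-continuous version that inequality fails at integers approached from above, though the conclusion drawn there survives because integers are rational.
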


Our next objective is to adapt the integral representation result for scalar-valued valuations from \cite[Theorem 1.1]{TrViIntRep}. In that paper, the representation was obtained for a specific control measure which does not necessarily coincide with our $\mu$. Thus, our goal is to obtain an integral representation of $\oV_g$ with respect to the measure $\mu$ defined in \eqref{eq_deK_mu}. This representation with $\mu$ follows directly from the proof of \cite[Proposition 2.2]{TrViIntRep} where it becomes apparent that the only requirement on the measure $\mu$ is the control property. In particular, one can prove the following result.

\begin{lemma}\label{l:weak pseudokernel}
Let $\oV\colon C(S^{n-1})_+\rightarrow M(S^{n-1})_{\scriptscriptstyle +}$ be a weak$^*$ continuous valuation, and consider $\mu$ as above. For $g\in C(\sn)$, there is a $\mu$-strong Carath\'eodory mapping $K_g\colon \rp\times \sn \to \R$ such that
$$\oV_g(f)= \int_{\sn } K_g(f(s), s) \d\mu(s)$$
for every $f\in \bfn$. Moreover, $K_g$ can be chosen to satisfy the following conditions.
\begin{itemize}
    \item For each $\lambda\ge 0$, the function $K_g(\lambda,\cdot)$ coincides with the Radon--Nikodym derivative with respect to $\mu$ of the measure $\nu_{\lambda,g}$ associated to the scalar-valued valuation $\oV_g$ as in \eqref{eq:defnulambda}. In particular, we have $K_g(\lambda, \cdot)\in L_1(\sn,\mu)$.
    \item The function $K_g(\cdot,s)$ is continuous for every $s\in \sn $.
\end{itemize}
\end{lemma}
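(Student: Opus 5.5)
Fix $g\in C(\sn)$ and work with the scalar valuation $\oV_g=\langle\oV,g\rangle$ on $\cfn$, which is continuous by weak$^*$ continuity, and with its canonical extension $\overline{\oV}_g$ to $\bfn$. The plan is to repeat the argument of \cite[Proposition 2.2]{TrViIntRep}, using the measure $\mu$ of \eqref{eq_deK_mu} instead of the control measure built there. The only property of the control measure used in that argument is the bound \eqref{i:controlmeasure}, which we have already verified for $\mu$: $|\nu_{\lambda,g}(A)|\le\|g\|_\infty\zeta(\lambda)\mu(A)$.

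\emph{Step 1 (kernel on a countable set).} For each $\lambda\ge0$, \eqref{i:controlmeasure} gives $\nu_{\lambda,g}\ll\mu$, with total variation at most $\|g\|_\infty\zeta(\lambda)\mu$. Let $\tilde K_g(\lambda,\cdot)=d\nu_{\lambda,g}/d\mu$. Then $|\tilde K_g(\lambda,s)|\le\|g\|_\infty\zeta(\lambda)$ for $\mu$-a.e.\ $s$, and in particular $\tilde K_g(\lambda,\cdot)\in L_1(\mu)$. For a simple function $f=\sum\lambda_i\chi_{A_i}$, the definition of $\overline{\oV}_g$ gives
\[
\overline{\oV}_g(f)=\sum_i\nu_{\lambda_i,g}(A_i)=\int\tilde K_g(f(s),s)\,d\mu(s).
\]

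\emph{Step 2 (continuity in $\lambda$).} This is the main obstacle. We need a version of the Radon--Nikodym derivatives that is continuous in $\lambda$ for $\mu$-a.e.\ $s$. I would first show that, on each interval $[0,m]$, the map $\lambda\mapsto\tilde K_g(\lambda,\cdot)$ is uniformly continuous in a pointwise-a.e.\ sense, following the modulus-of-continuity argument of \cite{TrViIntRep}. Continuity of $\overline{\oV}_g$ on $B(\sn)$ with respect to $\|\cdot\|_\infty$ gives, for $|\lambda-\lambda'|\le\delta$ and every $A$,
\[
|\nu_{\lambda,g}(A)-\nu_{\lambda',g}(A)|\le\varepsilon(\delta).
\]
A uniform bound of this type alone does not control density differences. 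So I would apply it on sets of the form $A=\{\tilde K_g(\lambda,\cdot)-\tilde K_g(\lambda',\cdot)>c\}$, combined with the valuation property: the functions $\lambda\chi_A+\lambda''\chi_{A^c}$ localize, and uniform continuity can be exploited through $\mu$-small sets. The goal is to show that $\sup_{|\lambda-\lambda'|\le\delta,\,\lambda,\lambda'\in\mathbb Q\cap[0,m]}|\tilde K_g(\lambda,s)-\tilde K_g(\lambda',s)|\to0$ for $\mu$-a.e.\ $s$. Since only countably many rationals are involved, a common full-measure set $S_0$ exists.

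We then define $K_g(\lambda,s)$ as the continuous extension from $\mathbb Q_+$ for $s\in S_0$, and set $K_g(\lambda,s)=0$ for $s\notin S_0$. This makes $K_g(\cdot,s)$ continuous for every $s$, and $K_g(\lambda,\cdot)$ is Borel as a pointwise limit. For irrational $\lambda$, the identity $K_g(\lambda,\cdot)=d\nu_{\lambda,g}/d\mu$ $\mu$-a.e.\ follows from weak continuity of $\lambda\mapsto\nu_{\lambda,g}(A)$ together with dominated convergence, using the bound $\zeta$ on $[0,m]$. Hence $K_g$ is $\mu$-strong Carath\'eodory.

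\emph{Step 3 (representation on $\bfn$).} Both sides of the representation are continuous in $f$ for $\|\cdot\|_\infty$ on bounded sets. The left side is continuous because $\overline{\oV}_g$ is continuous. The right side is continuous by dominated convergence, since $K_g$ is continuous in $\lambda$, uniformly on compact $\lambda$-ranges $\mu$-a.e., and $|K_g(\lambda,s)|\le\|g\|_\infty\zeta(m)$ on $[0,m]$. The two sides agree on simple functions by Step 1, and simple functions are dense in $\bfn$, so the representation holds for all $f\in\bfn$, and in particular on $\cfn$ where $\overline{\oV}_g=\oV_g$.
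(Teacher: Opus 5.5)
Your outline has the same architecture as the paper's appendix: densities $\tilde K_g(\lambda,\cdot)=d\nu_{\lambda,g}/d\mu$ at rational $\lambda$, an a.e.\ uniform-continuity statement on $\mathbb Q\cap[0,k]$, extension from the rationals, and density of simple functions. Steps~1 and~3 are fine. Step~2, however, which you rightly call the main obstacle, has no working mechanism.

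Testing $\overline{\oV}_g$ on $A=\{\tilde K_g(\lambda,\cdot)-\tilde K_g(\lambda',\cdot)>c\}$ gives, for one fixed pair, only $c\,\mu(A)\le\nu_{\lambda,g}(A)-\nu_{\lambda',g}(A)$. That is convergence in measure, pair by pair. Countability does let you intersect countably many full-measure sets. But the statement $\sup_{|\lambda-\lambda'|\le\delta}|\tilde K_g(\lambda,s)-\tilde K_g(\lambda',s)|\to0$ a.e.\ is not a per-pair statement. You need the \emph{union} of the bad sets over all rational pairs to have small measure, and summing per-pair bounds over infinitely many pairs diverges.

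Per-pair estimates genuinely cannot suffice. Take $\tilde K(\lambda,s)=\chi_{\{\lambda\ge s\}}$ with Lebesgue measure on $[0,1]$. Every single-pair bad set has measure at most $|\lambda-\lambda'|$, yet no version is continuous in $\lambda$. The associated functional $f\mapsto\int\chi_{\{f(s)\ge s\}}\,ds$ jumps at $f(s)=s$, which shows that information on non-constant test functions is what is needed.

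The missing idea is to let different pairs act on different pieces at once.
\begin{itemize}
\item Enumerate the rational pairs $(\lambda_i,\lambda_i')$ in $[0,k]$ with $|\lambda_i-\lambda_i'|\le\delta$.
\item Disjointify the bad set $A(\delta,c)$ into sets $A_i$, where $A_i$ is the set on which the $i$-th pair is the first with difference $>c$.
\item Split $A_i=A_i^+\cup A_i^-$ according to the sign of $\tilde K_g(\lambda_i,\cdot)-\tilde K_g(\lambda_i',\cdot)$.
\item Assemble
\[
f=\sum_{i\le N}\bigl(\lambda_i\chi_{A_i^+}+\lambda_i'\chi_{A_i^-}\bigr),\qquad f'=\sum_{i\le N}\bigl(\lambda_i'\chi_{A_i^+}+\lambda_i\chi_{A_i^-}\bigr).
\]
\end{itemize}
Additivity of $\overline{\oV}_g$ over disjoint supports gives $c\,\mu\bigl(\bigcup_{i\le N}A_i\bigr)\le\overline{\oV}_g(f)-\overline{\oV}_g(f')$, with $\|f-f'\|_\infty\le\delta$ and $\|f\|_\infty,\|f'\|_\infty\le k$.

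Since $f,f'$ range over an arbitrary bounded set, making the right-hand side small requires \emph{uniform} continuity of $\overline{\oV}_g$ on bounded sets \cite[Theorem 3.8]{TrViIntRep}. The plain continuity you invoke is not enough. This is exactly the paper's modulus $\omega_k(\delta)$: a supremum over partitions with a different pair on each piece, shown to tend to $0$, together with $c\,\mu(A(\delta,c))\le\omega_k(\delta)$.

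Let $\delta\downarrow0$; the paper uses a Borel--Cantelli step here, though monotonicity of $A(\delta,c)$ in $\delta$ also suffices. Then let $c\downarrow0$ along a sequence and $k$ range over $\mathbb N$. This produces the null set off which you can extend from the rationals. With that lemma supplied, the rest of your argument goes through.
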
 
\begin{proof}
For the sake of completeness we include the details in Appendix \ref{A:Inspection}.
\end{proof}

We will repeatedly rely on the fact that $K_g(\lambda, \cdot)$ belongs not only to $L_1(\sn,\mu)$, but also to $L_{\scriptscriptstyle \infty}(\sn,\mu)$. This will be established in the next lemma. 

\begin{lemma}\label{l:bounded kernel}
If  $g\in C(\sn)$, then the function $K_g\colon \rp\times \sn \to \R$ in Lemma \ref{l:weak pseudokernel} satisfies 
$$K_{g}(\lambda, s) \leq \cm(\lambda) \|g\|_{\scriptscriptstyle \infty}$$
for $\mu$-almost every $s\in \sn $ and for each $\lambda\in\rp$. 
\end{lemma}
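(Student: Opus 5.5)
The plan is to combine the Radon--Nikodym description of $K_g(\lambda,\cdot)$ from Lemma \ref{l:weak pseudokernel} with the control inequality \eqref{i:controlmeasure} that $\mu$ satisfies with control function $\cm$. Fix $g\in C(\sn)$ and $\lambda\in\rp$. By Lemma \ref{l:weak pseudokernel}, $K_g(\lambda,\cdot)$ is a version of $\mathrm{d}\nu_{\lambda,g}/\mathrm{d}\mu$. In particular, for every $A\in\Sigma_n$,
$$\nu_{\lambda,g}(A)=\int_A K_g(\lambda,s)\d\mu(s).$$
We also know from Section \ref{S:control} that
$$|\nu_{\lambda,g}(A)|\leq \|g\|_{\scriptscriptstyle\infty}\cm(\lambda)\mu(A)\qquad\text{for all } A\in\Sigma_n.$$

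The key step is a standard density argument. Let
$$A_\varepsilon=\{s\in\sn: K_g(\lambda,s)> \cm(\lambda)\|g\|_{\scriptscriptstyle\infty}+\varepsilon\}.$$
This set is Borel because $K_g(\lambda,\cdot)$ is Borel measurable. If $\mu(A_\varepsilon)>0$, then
$$\nu_{\lambda,g}(A_\varepsilon)=\int_{A_\varepsilon}K_g(\lambda,s)\d\mu(s)\ge (\cm(\lambda)\|g\|_{\scriptscriptstyle\infty}+\varepsilon)\mu(A_\varepsilon)>\cm(\lambda)\|g\|_{\scriptscriptstyle\infty}\mu(A_\varepsilon).$$
This contradicts the control inequality. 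Hence $\mu(A_\varepsilon)=0$ for every $\varepsilon>0$. Taking the union over $\varepsilon=1/k$, $k\in\mathbb N$, gives $K_g(\lambda,s)\le\cm(\lambda)\|g\|_{\scriptscriptstyle\infty}$ for $\mu$-almost every $s$. The same argument applied to $\{K_g(\lambda,\cdot)<-\cm(\lambda)\|g\|_{\scriptscriptstyle\infty}-\varepsilon\}$ yields the two-sided bound $|K_g(\lambda,s)|\le \cm(\lambda)\|g\|_{\scriptscriptstyle\infty}$ almost everywhere. This is slightly more than the statement asks for and will be useful later.

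The only point needing care is the identification of $\nu_{\lambda,g}$ with $\int K_g(\lambda,\cdot)\,d\mu$ on all Borel sets, not just closed ones. This identification is exactly the Radon--Nikodym statement in Lemma \ref{l:weak pseudokernel}. The control inequality was likewise extended from closed sets to all of $\Sigma_n$ by regularity in Section \ref{S:control}, so no real obstacle remains.

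A possible subtlety is that the exceptional null set depends on $\lambda$. The statement only claims the bound for each fixed $\lambda$, so this causes no problem. If a uniform statement were wanted, I would restrict to rational $\lambda$ and then use the continuity of $K_g(\cdot,s)$ together with Fact \ref{o:zeta}. Since $\cm$ is non-decreasing and constant on $[m-1,m)$, one approximates $\lambda$ from the left within the same step interval, where $\cm$ takes the same value.
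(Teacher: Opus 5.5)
Your proof is correct and follows essentially the same route as the paper: both use $\int_A K_g(\lambda,s)\d\mu(s)=\nu_{\lambda,g}(A)$ for all Borel $A$, bound this by $\|g\|_{\scriptscriptstyle\infty}\zeta(\lambda)\mu(A)$, and deduce the bound $\mu$-almost everywhere. The only difference is that the paper re-derives the one-sided set inequality inside the proof, via outer regularity of $\mu$ and the inf--sup definition of $\nu_{\lambda,g}$. You instead cite the control inequality \eqref{i:controlmeasure} already established at the end of Section~\ref{S:control}, and you spell out the final density step and the two-sided bound, which the paper leaves implicit.
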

\begin{proof}
We consider $K_g(\lambda, \cdot)$ as an element of $L_1(\sn,\mu)$. On the one hand,  it follows from the definition of Radon--Nikodym derivative and Lemma \ref{l:weak pseudokernel} that  
$$ \nu_{\lambda,g}(A) = \int_A K_g(\lambda, s) \d\mu(s)$$
for every Borel set $A\subset \sn $.
On the other hand, for every $\varepsilon>0$, it follows from the regularity of $\mu$ that there is an open set $G\supset A$  with 
\begin{equation}\label{eq:mureg}
 {\|g\|_{\scriptscriptstyle \infty} \cm(\lambda)}\, \mu(G\setminus A)\leq {\varepsilon}.   
\end{equation} 
It also follows from the definition of $\nu_{\lambda,g}$ in \eqref{eq:def nu lambda+} that there is a closed set  $C\subset G$ such that 
\begin{equation}
    \nu_{\lambda,g}(A) \leq \inf \{\langle \oV(f),g\rangle: f\in C(\sn ),\,\lambda\chi_C\leq f\leq \lambda\chi_G\}+\varepsilon.
\end{equation} 
Since we have $\mu_{\uno,\lambda}(G)=\sup\{\langle \oV(f),\uno\rangle:f\in C(\sn ),\,0\leq f\leq \lambda\chi_G\}$ and $\langle \oV(f),g\rangle\leq\|g\|_{\scriptscriptstyle \infty} \langle \oV(f),\uno\rangle$, it follows that
\begin{equation}\label{eq:nug2}
    \nu_{\lambda,g}(A) \leq \|g\|_{\scriptscriptstyle \infty} \cm(\lambda)\mu(G)+\varepsilon.
\end{equation} 
Combining \eqref{eq:mureg} and \eqref{eq:nug2}, we have 
$$\nu_{\lambda,g}(A)  \leq \|g\|_{\scriptscriptstyle \infty} \cm(\lambda)\mu(G) + \varepsilon \leq  \|g\|_{\scriptscriptstyle \infty} \cm(\lambda)\mu(A) + 2\varepsilon.$$
Since this happens for every $\varepsilon>0$, we obtain that  
$$\int_A K_g(\lambda, s) \d\mu(s)= \nu_{\lambda,g}(A) \leq \|g\|_{\scriptscriptstyle \infty} \cm(\lambda)\, \mu(A) $$
for every Borel set $A\subset \sn $, which proves the result.
\end{proof}

\begin{lemma}\label{l:kernel difference}
For every $g, h\in C(\sn )$ and $\alpha\in\mathbb{R}$, the function $K_g\colon \rp\times \sn \to \R$ from Lemma \ref{l:weak pseudokernel} satisfies 
$$K_{g}(\lambda, s) +K_{h}(\lambda, s) = K_{g+h}(\lambda, s)\quad \text{and}\quad K_{\alpha g}(\lambda,s)=\alpha K_g(\lambda,s)$$
for $\mu$-almost every $s\in \sn $ and for each $\lambda\in\rp$. 
\end{lemma}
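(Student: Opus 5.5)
The argument goes through the measures $\nu_{\lambda,g}$ and uses uniqueness of Radon--Nikodym derivatives. Fix $\lambda\in\rp$. By Lemma \ref{l:weak pseudokernel}, $K_g(\lambda,\cdot)$, $K_h(\lambda,\cdot)$ and $K_{g+h}(\lambda,\cdot)$ are Radon--Nikodym derivatives with respect to $\mu$ of $\nu_{\lambda,g}$, $\nu_{\lambda,h}$ and $\nu_{\lambda,g+h}$. Here $\nu_{\lambda,x}$ is the measure $\omega_\lambda$ associated via \eqref{eq:def nu lambda} to the scalar valuation $\oV_x=\langle\oV,x\rangle$. It therefore suffices to prove the identities of measures
$$\nu_{\lambda,g+h}=\nu_{\lambda,g}+\nu_{\lambda,h},\qquad \nu_{\lambda,\alpha g}=\alpha\,\nu_{\lambda,g}$$
on $\Sigma_n$. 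Uniqueness of the Radon--Nikodym derivative in $L_1(\sn,\mu)$ then gives the claimed equalities $\mu$-almost everywhere, for this fixed $\lambda$.

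For the measure identities, observe that the pairing is bilinear, so $\oV_{g+h}=\oV_g+\oV_h$ and $\oV_{\alpha g}=\alpha\oV_g$ as scalar-valued valuations on $\cfn$. If we extend them to $C(\sn)$ as in Section \ref{sectionnotation} (for example via $\oV^+$), the same relations persist. Lemma \ref{l:ABextension} then shows that the extension procedure is linear in the valuation, and this yields the identities for the induced measures.

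Alternatively, we can argue directly as in the proof of Lemma \ref{l:ABextension}. For a closed set $C$ and functions $f_i$ as in \eqref{limitrims},
$$\nu_{\lambda,g+h}(C)=\lim_i \oV_{g+h}(f_i)=\lim_i\oV_g(f_i)+\lim_i\oV_h(f_i)=\nu_{\lambda,g}(C)+\nu_{\lambda,h}(C),$$
and similarly for $\alpha g$. By inner regularity of these finite Radon measures on $\sn$, the identities extend from closed sets to all Borel sets.

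The only point needing care is that the exceptional $\mu$-null set depends on $\lambda$ (and on $g,h,\alpha$). The statement only asks for the identities for each fixed $\lambda$, so this is harmless. If one also wanted a single null set valid for all $\lambda$, one would take a countable dense set of rationals, form the union of the corresponding null sets, and use the continuity of $K_g(\cdot,s)$ for every $s$ from Lemma \ref{l:weak pseudokernel}. I do not expect a real obstacle here; the main step is simply invoking linearity of the extension together with uniqueness of Radon--Nikodym derivatives.
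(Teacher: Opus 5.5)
Your proposal is correct and follows essentially the same route as the paper. The paper writes $\int_A K_{g+h}(\lambda,s)\d\mu(s)=\nu_{\lambda,g+h}(A)=\nu_{\lambda,g}(A)+\nu_{\lambda,h}(A)=\int_A (K_g(\lambda,s)+K_h(\lambda,s))\d\mu(s)$ for every Borel set $A$. It uses the linearity of $x\mapsto\nu_{\lambda,x}(A)$ from Lemma \ref{l:ABextension}, concludes $\mu$-almost everywhere equality, and handles $\alpha g$ the same way.

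Your remark that the null set depends on $\lambda$ is accurate. It also matches how the paper later builds a single exceptional set in Section \ref{S:Usection}, using rational $\lambda$ and continuity in $\lambda$.
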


\begin{proof}
It follows from the Radon--Nikodym representation theorem that
\begin{align*}
    \int_A K_{g+h}(\lambda, s) \d\mu(s) &= \nu_{\lambda,g+h}(A) \\
    &=\nu_{\lambda,g}(A)+\nu_{\lambda,h}(A)\\ 
    &= \int_A K_g(\lambda, s) \d\mu(s) + \int_A K_h(\lambda, s) \d\mu(s) \\
    &= \int_A ( K_g(\lambda, s)+ K_h(\lambda, s)) \d\mu(s)
\end{align*}
for every Borel set $A\subset\sn$. Consequently, the first equality holds $\mu$-almost everywhere in $s\in\sn$. The second identity follows analogously.
\end{proof}

\section{Characterization of weak$^*$ continuous valuations}\label{S:Usection} 

Let $\poly$ be the set (of restrictions to $\sn$) of polynomials with rational coefficients defined on $\rn$. It follows from the Stone--Weierstrass Theorem that $\poly$ is a (countable) dense subset of $C(\sn )$. For our purposes, we could actually take any countable dense set in $C(\sn )$, closed under sums and multiplication by rational numbers, instead of $\poly$.   
Let $\left( p_j\right)_{j\in \mathbb N}$ be an enumeration of $\poly$ and let $(\lambda_m)_{m\in\mathbb{N}}$ be an enumeration of $\mathbb{Q}_{\scriptscriptstyle +}$. We consider now the functions $K_{p_j}$ as in Lemma \ref{l:weak pseudokernel}.

For every pair $(p_i,p_j)\subset \poly$ and every $m\in \mathbb N$, let us consider 
\[
  S^{m}_{i,j}= \{s\in\sn:K_{p_i}(\lambda_m,s)+K_{p_j}(\lambda_m,s)\neq K_{p_i+p_j}(\lambda_m,s)\}.
\]
By Lemma \ref{l:kernel difference}, we obtain that $S^{m}_{i,j}$ is Borel, $\mu(S^m_{i,j})=0$ and, therefore, the set $S_1=\bigcup_{i,j,m\in\mathbb{N}}S^m_{i,j}$ is also Borel with $\mu(S_1)=0$. 

Observe that for each $s\notin S_1$ and all $i,j,m\in\mathbb{N}$, the relation $K_{p_i}(\lambda_m,s)+K_{p_j}(\lambda_m,s)=K_{p_i+p_j}(\lambda_m,s)$ holds.

Now, for $\lambda\not\in\mathbb{Q}_{\scriptscriptstyle +}$, consider $\{\lambda_m\}_{m\in\mathbb{N}}\subset\mathbb{Q}_{\scriptscriptstyle +}$ converging to $\lambda$.
Since $K_{p}(\cdot,s)$ is continuous for every $s\not\in S_1$ and every $p\in\poly$,  for every $s\not\in S_1$,
\begin{align*}
    K_{p_i}(\lambda,s)+K_{p_j}(\lambda,s)&=\lim_{m\to\infty}K_{p_i}(\lambda_m,s)+K_{p_j}(\lambda_m,s)\\
    &=\lim_{m\to\infty}K_{p_i+p_j}(\lambda_m,s)=K_{p_i+p_j}(\lambda,s).
\end{align*}
Hence, $p_i\mapsto K_{p_i}(\lambda,s)$ is additive for every $\lambda\geq 0$ and every $s\not\in S_1$. It is well known that every additive function is $\mathbb{Q}$-linear. Therefore, 
\[
    K_{\alpha p}(\lambda,s)=\alpha K_p(\lambda,s)
\]
for every $\alpha\in\mathbb{Q}$, every $p\in\poly$, every $\lambda\in\mathbb{R}_{\scriptscriptstyle +}$, and every $s\not\in S_1$.

Next, we will make a similar procedure for the boundedness of the mapping. For every $p_i\in\poly$ and every $\lambda_m\in\mathbb{Q}_{\scriptscriptstyle +}$, let 
\[
  S^{m}_i= \{s\in\sn: K_{p_i}(\lambda_m,s) >\zeta(\lambda_m)\|p_i\|_{\scriptscriptstyle \infty} \}.
\]
By Lemma \ref{l:bounded kernel}, $S^{m}_i$ is Borel and $\mu(S^m_i)=0$. Let $S_2=\bigcup_{i,m\in\mathbb{N}}S^m_i$ which is also Borel and satisfies $\mu(S_2)=0$. Again, for $\lambda\in\mathbb{R}_{\scriptscriptstyle +}$, take a sequence $\{\lambda_m\}_{m\in\mathbb{N}}\subset\mathbb{Q}_{\scriptscriptstyle +}$ with $\lambda_m\to\lambda$. Since $K_{p_i}(\cdot,s)$ is continuous, for every $s\not\in S_2$,
\[
    K_{p_i}(\lambda,s)=\lim_{m\to\infty}K_{p_i}(\lambda_m,s)\leq \limsup_{m\to\infty}\zeta(\lambda_m) \|p_i\|_{\scriptscriptstyle \infty}.
\]
By Fact \ref{o:zeta}, we have $\limsup_{m\to\infty}\zeta(\lambda_m)\leq \zeta(\lambda)$. Thus, for every $s\not\in S_2$ and every $p_i\in\poly$, we have
\[
    K_{p_i}(\lambda,s)\leq \zeta (\lambda)\|p_i\|_{\scriptscriptstyle \infty}.
\]
Finally we define 
\begin{equation}\label{eq_set}
  S=\sn\setminus \left(S_1\cup S_2\right),
\end{equation}
which is Borel and has $\mu$-full measure.

For every $\lambda\geq 0$ and every $s\in S$, consider the mapping 
\[
    \begin{array}{ccl}
         \poly &\longrightarrow &\mathbb{R} \\
         p_i&\longmapsto &K_{p_i}(\lambda,s),
    \end{array}
\]
which is a linear and bounded with respect to the uniform norm, by the previous discussion. Therefore, we can consider its linear and continuous extension 
\begin{equation}\label{def:U}
U(\lambda,s):C(\sn)\to\mathbb{R}    
\end{equation} 
which satisfies the following fact.

\begin{observation}\label{ob:bounded}
For every $s\in S$, we have $\|U(\lambda, s)\|\leq \cm(\lambda)$.
\end{observation}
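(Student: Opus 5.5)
The plan is to obtain the norm bound on the countable dense set $\poly$ and then carry it over to the extension by continuity. Fix $s\in S$ and $\lambda\ge 0$. Since $s\notin S_2$, we already have the one-sided estimate $K_{p}(\lambda,s)\le \zeta(\lambda)\|p\|_{\scriptscriptstyle\infty}$ for every $p\in\poly$. The first step is to upgrade this to an estimate on the absolute value.

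For this we use that $\poly$ is closed under multiplication by $-1\in\mathbb Q$. Since $s\notin S_1$, the map $p\mapsto K_p(\lambda,s)$ is $\mathbb Q$-linear on $\poly$, which gives $K_{-p}(\lambda,s)=-K_p(\lambda,s)$. Applying the one-sided bound to $-p\in\poly$ then yields
$$-K_p(\lambda,s)=K_{-p}(\lambda,s)\le \zeta(\lambda)\|-p\|_{\scriptscriptstyle\infty}=\zeta(\lambda)\|p\|_{\scriptscriptstyle\infty}.$$
Hence $|K_p(\lambda,s)|\le\zeta(\lambda)\|p\|_{\scriptscriptstyle\infty}$ for all $p\in\poly$.

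Next, $\mathbb Q$-linearity together with this bound gives $|K_p(\lambda,s)-K_q(\lambda,s)|=|K_{p-q}(\lambda,s)|\le \zeta(\lambda)\|p-q\|_{\scriptscriptstyle\infty}$ for $p,q\in\poly$. So the map $p\mapsto K_p(\lambda,s)$ is Lipschitz on $\poly$ and extends uniquely to a continuous map $U(\lambda,s)$ on $C(\sn)$. This extension is additive and $\mathbb Q$-homogeneous, by passing to limits along sequences in $\poly$. It is also $\mathbb R$-homogeneous, by approximating a real scalar $\alpha$ with rationals and using continuity. Thus $U(\lambda,s)$ is exactly the continuous linear extension in \eqref{def:U}.

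Finally, for $g\in C(\sn)$ we choose $p_k\in\poly$ with $p_k\to g$ uniformly, which is possible by density. Then
$$|U(\lambda,s)(g)|=\lim_k|K_{p_k}(\lambda,s)|\le \zeta(\lambda)\lim_k\|p_k\|_{\scriptscriptstyle\infty}=\zeta(\lambda)\|g\|_{\scriptscriptstyle\infty},$$
so the operator norm satisfies $\|U(\lambda,s)\|\le\zeta(\lambda)$. The only point needing care is the lower bound in the first step. The sets $S_2$ were built from upper bounds only, so the argument genuinely relies on the symmetry $p\mapsto -p$ of $\poly$ together with the $\mathbb Q$-linearity available off $S_1$.
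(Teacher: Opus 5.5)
Your proof is correct and follows the same route as the paper. The paper only asserts that $p\mapsto K_p(\lambda,s)$ is linear and bounded on $\poly$ ``by the previous discussion'' and then takes the continuous extension. Your use of $K_{-p}(\lambda,s)=-K_p(\lambda,s)$, which follows from additivity off $S_1$, turns the one-sided bound from $S_2$ into $|K_p(\lambda,s)|\le\zeta(\lambda)\|p\|_{\scriptscriptstyle\infty}$, and that is exactly the step the paper leaves unstated; the norm bound then passes to $U(\lambda,s)$ by density, as you show.
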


For every $s\in S$, we obtain a continuous linear functional such that
\[
    U(\lambda,s)(p_i)=\langle U(\lambda,s),p_i\rangle=K_{p_i}(\lambda,s)
\]
for every $p_i\in\poly$ and every $\lambda\geq0$. Hence, for every $A\in\Sigma_n$
\[
    \nu_{\lambda,p_i}(A)=\int_AK_{p_i}(\lambda,s)\d \mu(s)=\int_A U(\lambda,s)(p_i)\d \mu(s).
\]

Next, we show that a similar result holds not only for $p_i\in\mathcal P_{\mathbb Q}$ but also for every $g\in C(\sn )$.
Specifically, we show that $s\mapsto U(\lambda,s)(g)$, considered as an element in $L_1(\sn,\mu)$, coincides with the Radon--Nikodym derivative of $\nu_{\lambda,g}$, that is, $U(\lambda, s)(g)=K_{g}(\lambda,s)$ as elements of $L_1(\sn,\mu)$. 

\begin{lemma}\label{l:Uforg}
Let $g\in C(\sn )$. For every $A\in \Sigma_n$, we have 
$$
\nu_{\lambda,g}(A) = \int_A K_{g}(\lambda, s) \d \mu(s) = \int_A U(\lambda,s)(g) \d \mu(s).
$$
\end{lemma}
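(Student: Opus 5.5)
The first equality is exactly the Radon--Nikodym statement of Lemma \ref{l:weak pseudokernel}, so only the second equality needs proof. The plan is to approximate $g$ uniformly by polynomials from $\poly$ and pass to the limit on both sides. For the polynomials themselves the identity is already established, since $U(\lambda,s)(p_i)=K_{p_i}(\lambda,s)$ for every $s\in S$.

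Fix $\lambda\ge0$, $g\in C(\sn)$ and $A\in\Sigma_n$. By density of $\poly$ (Stone--Weierstrass), choose $p_{i_k}\in\poly$ with $\|g-p_{i_k}\|_{\scriptscriptstyle\infty}\to0$.

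\textbf{Left-hand side.} The map $x\mapsto\nu_{\lambda,x}(A)$ is linear in $x$ by Lemma \ref{l:ABextension}. Combined with the control inequality \eqref{i:controlmeasure} for $\mu$, this gives
$$|\nu_{\lambda,g}(A)-\nu_{\lambda,p_{i_k}}(A)|=|\nu_{\lambda,g-p_{i_k}}(A)|\le\|g-p_{i_k}\|_{\scriptscriptstyle\infty}\,\cm(\lambda)\,\mu(A)\to0.$$

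\textbf{Right-hand side.} Before integrating, we need $s\mapsto U(\lambda,s)(g)$ to be Borel measurable on $S$. This holds because it is the pointwise limit on $S$ of the Borel functions $K_{p_{i_k}}(\lambda,\cdot)$: each $U(\lambda,s)$ is a bounded linear functional extending $p\mapsto K_p(\lambda,s)$. By Fact \ref{ob:bounded}, for every $s\in S$,
$$|U(\lambda,s)(g)-K_{p_{i_k}}(\lambda,s)|=|U(\lambda,s)(g-p_{i_k})|\le\cm(\lambda)\,\|g-p_{i_k}\|_{\scriptscriptstyle\infty}.$$
Since $S$ has full $\mu$-measure and $\mu$ is finite, integrating over $A$ yields
$$\int_A K_{p_{i_k}}(\lambda,s)\d\mu(s)\to\int_A U(\lambda,s)(g)\d\mu(s).$$

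\textbf{Conclusion.} For each $k$ the two approximating quantities agree, since $\nu_{\lambda,p_{i_k}}(A)=\int_A K_{p_{i_k}}(\lambda,s)\d\mu(s)$. Letting $k\to\infty$ gives the claimed identity.

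The only delicate point is the uniformity in $s$ of the bound on $U(\lambda,s)$. This is exactly why the null sets $S_1$ and $S_2$ were removed and the estimate of Fact \ref{ob:bounded} holds for all $\lambda$ simultaneously, not just rational ones. With that in hand the argument reduces to dominated, indeed uniform, convergence.
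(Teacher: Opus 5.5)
Your proof is correct and follows essentially the same route as the paper. Both approximate $g$ uniformly by $p_i\in\poly$, use $U(\lambda,s)(p_i)=K_{p_i}(\lambda,s)$ on $S$ to get Borel measurability, and pass to the limit using the bound $\|U(\lambda,s)\|\le\cm(\lambda)$ from Fact \ref{ob:bounded}. The differences are cosmetic: you use a direct uniform estimate on the integrands instead of the Dominated Convergence Theorem, and you explicitly justify $\nu_{\lambda,p_i}(A)\to\nu_{\lambda,g}(A)$ via linearity and the control inequality, which the paper leaves implicit.
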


\begin{proof}
Let $g\in  C(\sn )$, $\lambda\in\rp$, and let $S\subset \sn $ be defined as in \eqref{eq_set}, and consider the function 
\begin{align*}   
S & \longrightarrow  \R  \\
s &  \longmapsto  U(\lambda, s)(g).    
\end{align*} 
Note that this is well-defined as $U(\lambda,s)\in M(\sn )=C(\sn )^*$ for $s\in S$.

Choose a sequence $(p_i)_{i\in\mathbb N}$ in $\poly$ which converges to $g$ in the supremum norm. Note that, for every $i\in \mathbb N$ and for every $s\in S$, we have $U(\lambda, s)(p_i) = K_{p_i}(\lambda, s)$, and, therefore, $s\mapsto U(\lambda, s)(p_i)$ is Borel. 
Also, note that it follows from the continuity of $U(\lambda,s)$ that $\lim_{i\to\infty} U(\lambda,s)(p_i) = U(\lambda,s)(g)$ for every $s \in S$. 
In particular, the function  $ s  \mapsto  U(\lambda,s)(g)$ is Borel on $\sn$ (where we set $U(\lambda, s)(g)=0$ for $s\notin S$). 

\goodbreak
Fact \ref{ob:bounded} yields that $\|U(\lambda, s)\|\leq \cm(\lambda)$ for every $s\in S$. Therefore, for every $s\in S$ we have
\[\vert U(\lambda,s)(g)\vert \leq \cm(\lambda) \|g\|_{\scriptscriptstyle \infty}.\] 
In particular, the map $ s  \mapsto U(\lambda, s)(g)$ defines an element in $L_{\scriptscriptstyle \infty}(\sn,\mu)\subset L_1(\sn,\mu)$. 

We can choose the sequence $(p_i)_{i\in\mathbb N}$ so that $\|p_i\|_{\scriptscriptstyle \infty}\leq 1+\|g\|_{\scriptscriptstyle \infty}$ for every $i\in \mathbb N$. Hence, the constant function $\cm(\lambda)(1+\|g\|_{\scriptscriptstyle \infty}) \uno$ dominates $ U(\lambda,\cdot)(g)$ and $U(\lambda,\cdot)(p_i)$ for every $i\in \mathbb N$. Finally, by the Dominated Convergence Theorem, for every $A\in\Sigma_n$
\begin{align*}
    \nu_{\lambda,g}(A) & = \lim_{i\to\infty} \nu_{\lambda,p_i}(A)\\
    &= \lim_{i\to\infty} \int_{A}K_{p_i}(\lambda,s) \d\mu(s)\\
    &=\lim_{i\to\infty} \int_{ A} U(\lambda,s)(p_i) \d\mu(s)\\ 
        & = \int_{ A} \lim_{i\to\infty} U(\lambda,s)(p_i) \d\mu(s) =\int_{ A}  U(\lambda,s)(g)\d\mu(s).
\end{align*}
\end{proof}

\begin{lemma}\label{l:Uwcont}
For every $s\in S$ and every $g\in C(\sn)$, the mapping
\[
    \lambda\mapsto U(\lambda,s)(g)
\]
is continuous on $\mathbb{R}_{\scriptscriptstyle +}$.
\end{lemma}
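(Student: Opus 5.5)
The plan is an $\varepsilon/3$ argument: approximate $g$ by polynomials in $\poly$, and use two facts already established for $s\in S$. The first is that $\lambda\mapsto U(\lambda,s)(p_i)=K_{p_i}(\lambda,s)$ is continuous for every $p_i\in\poly$, since $K_{p}(\cdot,s)$ is continuous for every $s$ by Lemma \ref{l:weak pseudokernel}. The second is the uniform bound $\|U(\lambda,s)\|\le\cm(\lambda)$ from Fact \ref{ob:bounded}, which by Fact \ref{o:zeta} is locally uniform in $\lambda$.

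Fix $s\in S$, $g\in C(\sn)$, $\lambda_0\in\rp$, and $\varepsilon>0$. Choose $M\in\mathbb N$ with $\lambda_0+1\le M$. By Fact \ref{o:zeta}, $\cm$ is non-decreasing, so $\cm(\lambda)\le\cm(M)$ for all $\lambda\in[0,\lambda_0+1]$.

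By density of $\poly$ in $C(\sn)$, choose $p\in\poly$ with
$$\|g-p\|_{\scriptscriptstyle\infty}\le\frac{\varepsilon}{3\,\cm(M)+1}.$$
For $\lambda\in[0,\lambda_0+1]$, linearity of $U(\lambda,s)$ and Fact \ref{ob:bounded} give
$$|U(\lambda,s)(g)-U(\lambda,s)(p)|\le \cm(\lambda)\|g-p\|_{\scriptscriptstyle\infty}<\varepsilon/3.$$

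Since $U(\cdot,s)(p)=K_p(\cdot,s)$ is continuous at $\lambda_0$, there is $\delta\in(0,1)$ such that $|\lambda-\lambda_0|<\delta$ implies $|U(\lambda,s)(p)-U(\lambda_0,s)(p)|<\varepsilon/3$. The triangle inequality then yields
$$|U(\lambda,s)(g)-U(\lambda_0,s)(g)|<\varepsilon$$
for such $\lambda$. Equivalently, $U(\cdot,s)(g)$ is a locally uniform limit of continuous functions, hence continuous.

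The only step needing care is the identity $U(\lambda,s)(p)=K_p(\lambda,s)$ for all real $\lambda\ge0$, not just rational ones. This holds because $U(\lambda,s)$ was defined for every $\lambda\ge0$ and $s\in S$ as the extension of $p_i\mapsto K_{p_i}(\lambda,s)$. The linearity and boundedness of that map at irrational $\lambda$ were already verified via limits from rational $\lambda_m$. So there is no real obstacle; the main point is the local uniform bound on $\|U(\lambda,s)\|$.
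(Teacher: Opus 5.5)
Your proof is correct and follows essentially the same route as the paper. You approximate $g$ by some $p\in\poly$, use the continuity of $\lambda\mapsto U(\lambda,s)(p)=K_p(\lambda,s)$, and control the error with the locally uniform bound $\|U(\lambda,s)\|\le\cm(\lambda)\le\cm(\lambda_0+1)$ from Facts \ref{ob:bounded} and \ref{o:zeta}; the only cosmetic difference is that you argue with $\varepsilon$--$\delta$ and a pre-scaled tolerance, whereas the paper uses sequences $\lambda_i\to\lambda$ and ends with the bound $(\cm(\lambda+1)+\cm(\lambda)+1)\varepsilon$.
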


\begin{proof}
Fix $g\in C(\sn )$,  $s\in S$, and $\lambda\ge 0$. Let $(\lambda_i)_{i\in \mathbb N} \subset \rp$ be a sequence which converges to $\lambda$. Without loss of generality, we can assume $\lambda_i <\lambda+1$.

Let $\varepsilon > 0$ and choose $p_j\in\poly$ with $\|g-p_j\|_{\scriptscriptstyle \infty}\leq \varepsilon$. Since $\lambda\mapsto U(\lambda, s)(p_j)$ is continuous, there exists $i_0\in \mathbb N$ such that  
$$\left|  U(\lambda_i, s)(p_j) -   U(\lambda, s)(p_j) \right|\leq \varepsilon$$
for every $i\geq i_0$. Hence, we have
\begin{align*}
   \vert  &U(\lambda_i, s)(g) -  U(\lambda, s)(g)\vert \\
&\leq  | U(\lambda_i, s)(g-p_j) | +   |U(\lambda, s)(g-p_j)| + |U(\lambda_i, s)(p_j) - U(\lambda, s)(p_j)  | \\
 &\leq \|g-p_j\|_{\scriptscriptstyle \infty}(\|U(\lambda_i, s)\|+\| U(\lambda, s)\| ) +|  U(\lambda_i, s)(p_j) - U(\lambda, s)(p_j) | \\
 &\leq  (\cm(\lambda+1)+\cm(\lambda)+1)\varepsilon
\end{align*}
for every $i\geq i_0$, where we used Fact \ref{ob:bounded} and the fact that $\zeta$ is non-decreasing. Since $\lambda$ is fixed and $\varepsilon$ arbitrary, this finishes the proof.
\end{proof}

The previous result shows that for every $g\in C(S^{n-1})$, the function of two variables $$(\lambda, s)\mapsto \langle U(\lambda, s),g\rangle$$ is $\mu$-strong Carath\'eodory, and in particular, it is measurable. Moreover, 
$$|\langle U(\lambda, s),g\rangle|\leq  \|g\|_{\scriptscriptstyle \infty}\, \cm(\lambda)$$ 
for every $\lambda\ge 0$ and $s\in S$. Therefore, the integral $$\int_{\sn } \langle U(f(s),s),g \rangle \d\mu(s)$$ is well-defined for every $f\in \cfn$.
  
We now have all the tools to prove our main result Theorem \ref{th:rep}. First, we state it in the context of  valuations on functions.

\begin{theorem}\label{th:representation}
A map $\oV\colon  \cfn\to \measn$ is a weak$^*$\! continuous valuation with $\oV(0)=0$ if and only if there exist $\mu\in \measn$ and a $\mu$-\somename map $U\colon \rp \times \sn  \to \measn$ with $U(0,s)=0$ for $\mu$-almost every $s\in \sn$, such that
\begin{equation}\label{eq_equality}
\langle \oV(f),g\rangle = \int_{\sn } \langle U(f(s), s),g\rangle \d\mu(s)
\end{equation}
for every $f\in\cfn$ and every $g\in C(\sn)$. 
\end{theorem}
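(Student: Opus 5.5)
We prove the two implications separately. Sufficiency is elementary, and necessity assembles the objects built above.

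\emph{Sufficiency.} Suppose $\mu$ and $U$ are given, and define $\oV$ by \eqref{eq_equality}. Fix $f\in\cfn$ and $g\in C(\sn)$. The integrand $s\mapsto\langle U(f(s),s),g\rangle$ is Borel by the strong Carath\'eodory property. It is bounded by $\|g\|_{\scriptscriptstyle\infty}\cm(\|f\|_{\scriptscriptstyle\infty})$, provided we first replace $\cm$ by $\tilde\cm(\lambda)=\sup_{t\le\lambda}\cm(t)$. (If that supremum could be infinite, we would instead control the integrand only on bounded ranges; we expect uniform control to be meant with $\cm$ locally bounded, or we can reduce to this case.) Hence the integral is finite.

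The map $g\mapsto\int\langle U(f(s),s),g\rangle\d\mu$ is linear, bounded, and non-negative on $g\ge0$, since each $U(\lambda,s)\in\measn$. By the Riesz representation theorem it is therefore a measure $\oV(f)\in\measn$.

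The valuation property holds pointwise in $s$: for each $s$, the pair $\{(f\vee h)(s),(f\wedge h)(s)\}$ coincides with $\{f(s),h(s)\}$. Integrating gives \eqref{eq:valuationfunctions}. Next, $\oV(0)=0$ because $U(0,s)=0$ for $\mu$-a.e.\ $s$.

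For weak$^*$ continuity, let $f_k\to f$ uniformly. For $\mu$-a.e.\ $s$ we have $\langle U(f_k(s),s),g\rangle\to\langle U(f(s),s),g\rangle$ by continuity in $\lambda$, and the integrands are uniformly dominated by $\|g\|_{\scriptscriptstyle\infty}\tilde\cm(\sup_k\|f_k\|_{\scriptscriptstyle\infty})$. Dominated convergence then gives $\oV_g(f_k)\to\oV_g(f)$.

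\emph{Necessity.} Take $\mu$ as in \eqref{eq_deK_mu}, the full-measure set $S$ from \eqref{eq_set}, and $U(\lambda,s)$ from \eqref{def:U} for $s\in S$; set $U(\lambda,s)=0$ for $s\notin S$.

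First, $U(\lambda,s)$ is a non-negative functional. Lemma \ref{l:Uforg} gives $U(\lambda,\cdot)(g)=K_g(\lambda,\cdot)$ $\mu$-a.e. For $g\ge0$, the measure $\nu_{\lambda,g}$ is non-negative, because it arises from the non-negative valuation $\oV_g$ via \eqref{limitrims}, so $K_g\ge0$ a.e. To obtain positivity on a single full-measure set, we restrict to countably many $g$, namely the non-negative elements of $\poly$ and rational $\lambda$. We then extend by density of non-negative polynomials among non-negative continuous functions (approximate $g\ge 0$ by $p+\varepsilon$) and by continuity in $\lambda$ (Lemma \ref{l:Uwcont}). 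After shrinking $S$ by a null set, this gives $U(\lambda,s)\in\measn$.

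The strong Carath\'eodory property follows from the Borel measurability established in the proof of Lemma \ref{l:Uforg} together with Lemma \ref{l:Uwcont}. Uniform control is Fact \ref{ob:bounded}.

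$U(0,s)=0$ a.e.: we have $\nu_{0,g}=0$ by \eqref{limitrims}, since $\oV(0)=0$. Hence $K_g(0,\cdot)=0$ a.e. for $g\in\poly$, and by density $U(0,s)=0$ for a.e.\ $s$.

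Finally, the representation: by Lemma \ref{l:weak pseudokernel},
\[
\oV_g(f)=\int K_g(f(s),s)\d\mu(s).
\]
The main obstacle is to replace $K_g(f(s),s)$ by $U(f(s),s)(g)$, since Lemma \ref{l:Uforg} only identifies the two for each fixed $\lambda$ a.e., and the null set may depend on $\lambda$. We plan to handle this in two steps.

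\emph{Step 1 ($g=p\in\poly$).} $U(\lambda,s)(p)=K_p(\lambda,s)$ holds for all $\lambda$ and all $s\in S$ by construction, so the representation holds directly.

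\emph{Step 2 (general $g$).} Take $p_i\to g$ uniformly. Then $\oV_{p_i}(f)\to\oV_g(f)$, because $|\langle\oV(f),g-p_i\rangle|\le\|g-p_i\|_{\scriptscriptstyle\infty}\oV_{\uno}(f)$. On the other side,
\[
|U(f(s),s)(p_i)-U(f(s),s)(g)|\le\cm(f(s))\,\|p_i-g\|_{\scriptscriptstyle\infty},
\]
which is uniformly bounded by a multiple of $\cm$ evaluated at $\lceil\|f\|_{\scriptscriptstyle\infty}\rceil$, using Fact \ref{o:zeta}. Dominated convergence then yields \eqref{eq_equality} for every $g\in C(\sn)$.
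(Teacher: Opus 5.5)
Your proposal is correct and follows the paper's proof. It uses the same $\mu$, $S$ and $U$; the identity $\langle U(\lambda,s),p\rangle=K_p(\lambda,s)$ for all $\lambda$ and $p\in\poly$ on a full-measure set, extended to all $g$ by density (your Step~2 is the paper's remark that two measures agreeing on $\poly$ coincide); and dominated convergence for the converse. Your checks that $U(\lambda,s)\ge 0$ and $U(0,s)=0$ for $\mu$-a.e.\ $s$ fill in points the paper leaves implicit.

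The one thing to correct is your parenthetical ``or we can reduce to this case''. For a merely finite-valued $\cm$ the converse genuinely fails. On $S^1$, take $U(\lambda,\theta)=\theta^{-1}\phi\bigl((\lambda-\theta)/\theta^{2}\bigr)\delta_\theta$ for $\theta\in(0,1)$ and $U=0$ otherwise, where $\phi$ is a continuous bump supported in $[-1,1]$ with $\phi(0)=1$. This $U$ is strong Carath\'eodory with $U(0,\cdot)=0$ and $\|U(\lambda,\theta)\|\le 2/\lambda$ for $\lambda>0$, yet $f(\theta)=\theta$ near $0$ gives $\int_0^1\theta^{-1}\d\theta=\infty$. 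So local boundedness of $\cm$ has to be part of the definition. The paper tacitly assumes this, and the non-decreasing $\cm$ of \eqref{eq_cm} provides it.
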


\begin{proof}
Let $\oV\colon \cfn\to \measn$ be a weak$^*$~continuous valuation and let $\mu$ and $\cm$ be defined as in \eqref{eq_deK_mu} and \eqref{eq_cm}, respectively. By Lemmas \ref{l:Uforg} and \ref{l:Uwcont}, the map $U\colon \rp \times \sn  \to \measn$ given by \eqref{def:U} satisfies the $\mu$-strong Carath\'eodory condition with respect to the weak$^*$ topology on $\measn$ (for convenience, set $U(\lambda,s)$ to be the zero measure for $s\notin S$). By Fact \ref{ob:bounded}, $U$ is uniformly controlled by $\cm$.

It remains to prove that \eqref{eq_equality} holds for $f\in\cfn$ and $g\in C(\sn)$. To see this, consider the map $$\oV'\colon\cfn\to \measn$$ defined by 
\eqref{eq_equality}, that is, $$\langle \oV'(f),g\rangle = \int_{\sn } \langle U(f(s), s),g\rangle \d\mu(s)$$
for $f\in\cfn$, $g\in C(\sn)$. First, note that the above integral is well-defined because $U$ satisfies the $\mu$-strong Carath\'eodory condition. Also, the expression $\langle \oV'(f),g\rangle$ is clearly linear in $g$ and bounded because $U$ is uniformly controlled. Hence, $\oV'(f)\in M(\sn )$ for every $f\in\cfn$. 

Note that, $$\langle U(\lambda, s),p\rangle = K_{p}(\lambda, s)$$ for every $p\in \poly$ and  $\mu$-almost every $s\in \sn $. It follows that 
$$\langle \oV(f),p\rangle= \int_{\sn } \!K_p(f(s), s) \d\mu(s) = \int_{\sn }\! \langle U(f(s), s),p\rangle \d\mu(s) = \langle \oV'(f),p\rangle$$
for every $f\in \cfn$ and $p\in \poly$.
Therefore, $\oV'(f)$ and $\oV(f)$ are two elements in $M(\sn )$ which act identically on a dense set of $C(\sn )$. Hence, they coincide.  

For the converse implication, note first that since $U$ is $\mu$-\somename, the integral $\int_{\sn } \langle U(f(s), s),g\rangle \d\mu(s)$ exists for every $g\in C(\sn)$.  Hence, the formula $$
\langle \oV(f),g\rangle = \int_{\sn } \langle U(f(s), s),g\rangle \d\mu(s)$$
defines a map $\oV\colon \cfn\to \measn$. The fact that $\oV(f)$ is an element of $M(\sn )$ follows as above: it defines a linear and bounded function of $g$. 

It is easy to see that $\oV$ is a valuation. Only weak$^*$ continuity remains to be proved. 
Note that the $\mu$-strong Carath\'eodory condition implies that given $g$, there is a set $S\subset \sn $  of full $\mu$-measure such that $\langle U(\lambda, s),g\rangle$ is continuous in $\lambda$ for every $s\in S$.  

Fix $f\in \cfn$ and $g\in C(\sn)$. Let $(f_i)_{i\in \mathbb N}$ be a sequence in $\cfn$ converging to $f$. For every $s\in S$, we have $\langle U(f_i(s), s),g\rangle \rightarrow \langle U(f(s), s),g\rangle$ as $i\to\infty$. Moreover, since $U$ is uniformly controlled, there is a constant such that $\langle U(f_i(s), s),g\rangle \leq C$ and $\langle U(f(s), s),g\rangle \leq C$ for every $i\in \mathbb N$ and $s\in S$. 
Now, the Dominated Convergence Theorem yields that 
\begin{align*}
    \lim_{i\to\infty} \langle \oV(f_i),g\rangle &= \lim_{i\to\infty} \int_{\sn } \langle U(f_i(s), s),g\rangle \d\mu(s) \\&= \int_{\sn } \langle U(f(s), s),g\rangle \d\mu(s) \\[4pt]
    &=\langle \oV(f),g\rangle.
\end{align*} 
As a final step, note that $\left\langle\oV(0),g  \right\rangle=0$ for every $g\in C(\sn)$.
\end{proof}

Clearly, from this result, Theorem \ref{th:rep} follows easily. In fact, the integral representation from Theorem \ref{th:representation} provides extra information about the original valuation. Recall that $B(\sn )$, the space of bounded Borel functions, can be identified with a subspace of $M(\sn )^*=C(\sn )^{**}$, with duality given by $\langle g,\mu\rangle=\int_{\sn} g \d\mu$ for $\mu\in M(\sn )$ and $g\in B(\sn )$:

\begin{corollary}\label{co: extension}
If $\,\oV\colon \cfn\to \measn$ is a weak$^*$~continuous valuation with $\oV(0)=0$, then the integral representation in Theorem \ref{th:representation} defines a weak$^*$~continuous extension of $\oV$ to a valuation $\overline{\oV}\colon \bfn\to \measn$.
Moreover, this extension satisfies that 
$$\lim_{i\to\infty} \langle \overline{\oV}(h_i),g\rangle= \langle \overline{\oV}(h),g\rangle$$
for every bounded sequence $(h_i)_{i\in \mathbb N}$ in $\bfn$ which is pointwise convergent to $h\in \bfn$ and for every $g\in C(\sn )$.
\end{corollary}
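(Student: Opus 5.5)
The plan is to define the extension directly by the formula of Theorem \ref{th:representation}, now allowing bounded Borel $h$:
$$\langle \overline{\oV}(h),g\rangle=\int_{\sn}\langle U(h(s),s),g\rangle\d\mu(s),\qquad h\in\bfn,\ g\in C(\sn).$$

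First we check that this is well defined. Because $U_g$ is $\mu$-strong Carath\'eodory, the map $s\mapsto \langle U(h(s),s),g\rangle$ is Borel for Borel $h$. For simple $h$ this is immediate, since on each level set the map is $s\mapsto U_g(\lambda_i,s)$. For general $h\in\bfn$ we take simple $h_k\to h$ pointwise. Continuity of $U_g(\cdot,s)$ on the full-measure set $S_0$ gives pointwise convergence on $S_0$, and a $\mu$-null modification preserves Borel measurability modulo completion, which is enough for integration.

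Next we need a bound. Let $\lambda_0=\|h\|_{\scriptscriptstyle\infty}$. Uniform control together with Fact \ref{o:zeta} (monotonicity of $\cm$) gives
$$|\langle U(h(s),s),g\rangle|\le \cm(\lambda_0)\|g\|_{\scriptscriptstyle\infty}.$$
Hence the integral exists, is linear in $g$, and has norm at most $\cm(\lambda_0)\mu(\sn)$. By Riesz, $\overline{\oV}(h)\in M(\sn)$. Positivity follows because $U(\lambda,s)\in\measn$: for $g\ge0$ the integrand is non-negative.

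The valuation property is pointwise. For each $s$, the pair $\{(h_1\vee h_2)(s),(h_1\wedge h_2)(s)\}$ equals $\{h_1(s),h_2(s)\}$, so the integrands add correctly. On $\cfn$ the map $\overline{\oV}$ coincides with $\oV$ by Theorem \ref{th:representation}, so it is indeed an extension.

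For the sequential continuity claim, let $h_i\to h$ pointwise with $\sup_i\|h_i\|_{\scriptscriptstyle\infty}\le\Lambda$, and fix $g$. For $s$ in the full-measure set $S_0$ attached to $U_g$, continuity of $\lambda\mapsto U_g(\lambda,s)$ gives $U_g(h_i(s),s)\to U_g(h(s),s)$. The integrands are dominated by the constant $\cm(\Lambda)\|g\|_{\scriptscriptstyle\infty}$, which is $\mu$-integrable since $\mu$ is finite. Dominated convergence then yields the limit.

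Weak$^*$ continuity with respect to $\|\cdot\|_{\scriptscriptstyle\infty}$ on $\bfn$ is a special case. Norm convergence implies bounded pointwise convergence, and in a metric space sequential continuity suffices.

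The only delicate point is the measurability of $s\mapsto U_g(h(s),s)$ for merely Borel $h$. I would handle it through the simple-function approximation above, using that the strong Carath\'eodory set $S_0$ is Borel of full measure.
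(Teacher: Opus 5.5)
Your proposal is correct and follows the paper's argument: you define $\overline{\oV}$ by the integral formula, get measurability of $s\mapsto\langle U(h(s),s),g\rangle$ from the $\mu$-strong Carath\'eodory property, and conclude by dominated convergence using the bound by $\cm(\Lambda)\|g\|_{\scriptscriptstyle\infty}$, which relies on the monotonicity of the constructed control function $\cm$. Your simple-function approximation on the Borel full-measure set $S_0$ spells out the measurability step that the paper compresses into ``composition of measurable functions.''
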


\begin{proof}
First, recall that, for every $g\in C(\sn )$,  the function 
$$\rp\times \sn \to \R$$
$$(\lambda, s) \mapsto \langle U(\lambda, s),g\rangle$$ is $\mu$-strong Carath\'eodory. Hence, for every $h\in B(\sn )$, the function $s\mapsto \langle U(h(s), s),g\rangle$ is measurable as a composition of measurable functions. 
The rest of the statements follow immediately, using the Dominated Convergence Theorem. 
\end{proof}

In particular, this means that measure-valued weak$^*$ continuous valuations on star bodies can be extended to bounded Borel star sets preserving continuity. The following is an elementary observation that we include for the sake of clarity.

\begin{corollary}\label{co:uniqueness} 
Let $\mu\in M(\sn)_{\scriptscriptstyle +}$. Let $U_1,U_2:\mathbb{R}_{\scriptscriptstyle +}\times \sn\to M(\sn)_{\scriptscriptstyle +}$ be $\mu$-strong Carath\'eodory and uniformly controlled maps with $U_1(0,s)=U_2(0,s)=0$ for $\mu$-almost every $s\in\sn$. The following are equivalent: 
\begin{itemize}
    \item For every $\lambda\geq 0$, $U_1(\lambda,s)=U_2(\lambda,s)$ for $\mu$-almost every $s\in\sn$.
    \item The valuations defined through $U_1$ and $U_2$ coincide. That is, for every $f\in C(\sn)_{\scriptscriptstyle +}$ and $g\in C(\sn)$ we have
    \[
        \int_\sn \langle U_1(f(s),s),g\rangle\d \mu(s)=\int_\sn \langle U_2(f(s),s),g\rangle \d \mu(s).
    \]    
\end{itemize}
\end{corollary}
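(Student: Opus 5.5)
The implication from the first condition to the second is immediate. Fix $f\in\cfn$ and $g\in C(\sn)$; I would argue via simple functions, or directly as follows. Take the full-measure sets $S^1_g,S^2_g$ on which $\lambda\mapsto\langle U_k(\lambda,s),g\rangle$ is continuous. Take also the null set $N=\bigcup_{q\in\mathbb Q_+}N_q$, where $N_q$ is the null set off which $U_1(q,s)=U_2(q,s)$. For $s\in S^1_g\cap S^2_g\setminus N$, the continuous functions $\lambda\mapsto\langle U_1(\lambda,s),g\rangle$ and $\lambda\mapsto\langle U_2(\lambda,s),g\rangle$ agree on $\mathbb Q_+$, hence on all of $\rp$. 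In particular they agree at $\lambda=f(s)$. So the integrands coincide $\mu$-a.e., and the integrals are equal; integrability is guaranteed by uniform control.

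For the converse, assume the integrals agree for all $f\in\cfn$ and $g\in C(\sn)$. Fix $g$ and set $\Phi_k(\lambda,s)=\langle U_k(\lambda,s),g\rangle$.
\begin{itemize}
\item \textbf{Pass to bounded Borel functions.} By Corollary \ref{co: extension}, or directly by dominated convergence using uniform control and the strong Carath\'eodory property, the equality $\int\Phi_1(h(s),s)\d\mu=\int\Phi_2(h(s),s)\d\mu$ extends from $\cfn$ to every $h\in\bfn$. The reason is that every $h=\lambda\chi_A$ is a bounded pointwise $\mu$-a.e.\ limit of continuous functions: take $C\subset A\subset G$ with $C$ closed, $G$ open and $\mu(G\setminus C)$ small, and Urysohn functions between them. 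A diagonal argument then gives convergence $\mu$-a.e.
\item \textbf{Test on $h=\lambda\chi_A$.} Using $U_k(0,s)=0$ a.e., so that $\Phi_k(0,s)=0$ on the complement of $A$, this yields $\int_A\Phi_1(\lambda,s)\d\mu=\int_A\Phi_2(\lambda,s)\d\mu$ for every Borel set $A$. Both integrands are bounded by $\zeta(\lambda)\|g\|_\infty$, so $\Phi_1(\lambda,\cdot)=\Phi_2(\lambda,\cdot)$ $\mu$-a.e.
\item \textbf{Remove the dependence on $g$.} Apply the previous step to each $p$ in the countable dense set $\poly$, and take the union of the resulting null sets. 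Off this set, $U_1(\lambda,s)$ and $U_2(\lambda,s)$ agree on $\poly$. Being bounded functionals, they therefore coincide as measures, which gives the first condition for each fixed $\lambda$.
\end{itemize}

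The only delicate point is the approximation of $\lambda\chi_A$ by continuous functions with $\mu$-a.e.\ convergence. The measurability of $s\mapsto\Phi_k(h(s),s)$ is already handled by the strong Carath\'eodory property. For the approximation, I would first do closed sets $A=C$ with $f_i$ as in \eqref{limitrims}. These converge to $\lambda\chi_C$ everywhere, since $f_i(s)=0$ once $\dist(s,C)\ge 1/i$. This gives equality of the two signed measures $A\mapsto\int_A\Phi_k(\lambda,s)\d\mu$ on closed sets. Both are absolutely continuous with respect to $\mu$, hence regular, so they agree on all Borel sets, and the approximation of general Borel sets is bypassed entirely.
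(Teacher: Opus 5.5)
Your proof is correct. In the converse direction you follow the paper's route. You test on $\lambda\chi_A$ and use $U_k(0,s)=0$ off $A$. This gives $\langle U_1(\lambda,s),p\rangle=\langle U_2(\lambda,s),p\rangle$ for $\mu$-a.e.\ $s$ and each $p\in\poly$, and you finish by countability and density. The one difference is that the paper simply asserts that $\oV_1=\oV_2$ forces $\overline{\oV}_1=\overline{\oV}_2$. You justify the instance actually needed: bounded pointwise approximation of $\lambda\chi_C$, for closed $C$, by the functions in \eqref{limitrims}, then regularity of the two $\mu$-absolutely continuous signed measures $A\mapsto\int_A\langle U_k(\lambda,s),g\rangle\d\mu(s)$. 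That is a genuine gain in rigor.

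In the forward direction you take a different route. The paper shows that the extensions $\overline{\oV}_1,\overline{\oV}_2$ agree on simple functions. It then uses density of simple functions in $B(\sn)$ and the continuity of the extensions from Corollary~\ref{co: extension}. You instead interchange the quantifiers pointwise. For fixed $g$, the map $\lambda\mapsto\langle U_k(\lambda,s),g\rangle$ is continuous off a $\mu$-null set, and the two maps agree at the countably many rational $\lambda$. Hence they agree for all $\lambda$ at $\mu$-a.e.\ $s$, so the integrands coincide a.e.\ for every $f$.

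Your route is shorter and self-contained, since no extension machinery is needed. It applies verbatim to bounded Borel $f$. Running $g$ over $\poly$, it also shows that the first condition upgrades to $U_1(\lambda,s)=U_2(\lambda,s)$ for all $\lambda\ge 0$ simultaneously, off a single null set. The paper's argument, in exchange, slots into the extension framework it has already built.

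One small caveat applies equally to the paper. Your remark that integrability follows from uniform control, and your dominated-convergence step, both tacitly use that $\zeta$ is bounded on bounded intervals. The definition of \emph{uniformly controlled} does not literally require this. It does hold for the non-decreasing $\zeta$ constructed in Section~\ref{S:control}.
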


\begin{proof}
Assume that for every $\lambda\geq0$, there exists $S_\lambda\subset\sn$ of $\mu$-full measure such that if $s\in S_\lambda$, then $U_1(\lambda,s)=U_2(\lambda,s)$. For every $A_1,\dots,A_m\in\Sigma_n$, every $\lambda_1,\dots,\lambda_m\geq 0$ and every $g\in C(\sn)$ one has
\begin{multline*}
    \left\langle \overline{\oV}_1\left(\sum_{i=1}^m\lambda_i \chi_{A_i}\right),g\right\rangle=\int_{\sn}\left\langle U_1\left(\sum_{i=1}^m\lambda_i \chi_{A_i},s\right),g\right\rangle\d \mu(s)\\
    \hspace{20mm}=\sum_{i=1}^m\int_{A_i}\langle U_1(\lambda_i,s),g\rangle \d \mu(s)=\sum_{i=1}^m\int_{A_i}\langle U_2(\lambda_i,s),g\rangle \d \mu(s)\\
    =\left\langle \overline{\oV}_2\left(\sum_{i=1}^m\lambda_i \chi_{A_i}\right),g\right\rangle\hspace{40mm}
\end{multline*}
Therefore, $\overline{\oV}_1$ and $\overline{\oV}_2$ coincide on simple functions, which form a dense set in $B(\sn)$. By Corollary \ref{co: extension}, $\overline{\oV}_1$ and $\overline{\oV}_2$ are weak$^*$ continuous and coincide in a dense set, so we must have $\oV_1=\oV_2$.

Conversely, if $\oV_1=\oV_2$, then $\overline{\oV}_1=\overline{\oV}_2$ and for all $\lambda\geq0$, $A\in\Sigma_n$ and $p\in\poly$,
\[
    \int_{A}\langle U_1(\lambda,s),p\rangle\mathrm{d} \mu(s)=\langle \overline{\oV}_1(\lambda\chi_A),p\rangle =\langle \overline{\oV}_2(\lambda\chi_A),p\rangle=\int_A  \langle U_2(\lambda,s),p\rangle \mathrm{d} \mu(s).
\]
Fix $\lambda\geq 0$. For every $p\in\poly$, there exists a set $S_p\subset \sn$ of $\mu$-full measure such that if $s\in S_p$, then
\[
    \langle U_1(\lambda,s),p\rangle=\langle U_2(\lambda,s),p\rangle.
\]
Consider $\bigcap_{p\in\poly}S_p$ which is of $\mu$-full measure. For every $s$ in this set, 
\[
    \langle U_1(\lambda,s),p\rangle=\langle U_2(\lambda,s),p\rangle \quad \text{for all }p\in\poly.
\]
Since $\poly$ is a dense set in $C(\sn)$, it follows that if $s\in \bigcap_{p\in\poly}S_p$, then $U_1(\lambda,s)=U_2(\lambda,s)$.
\end{proof}
\goodbreak

\section{Rotation Equivariant, Homogeneous and locally determined valuations}\label{S:equivariant}

In this section, we specialize our main result to obtain classification theorems for valuations with certain geometric properties.

\subsection{Rotation equivariant valuations}\label{ss_rev}
Our first aim is to show that the integral representation of Theorem \ref{th:representation} has a special form in the rotation equivariant case. In particular, many of the technical steps 
in the previous sections can be simplified in this setting. 

Recall the definition of $\nu_{\lambda,g}$ in \eqref{eq:defnulambda} and the definition of the control measure $\mu$ in \eqref{eq_deK_mu}. First, the rotation equivariance implies that for every $\phi\in\on$ and every $g\in C(\sn)$
\begin{equation}\label{eq:rotation}
    \nu_{\lambda,g}(A)=\langle \overline{V}(\lambda\chi_A),g\rangle=\langle \overline{V}(\lambda\chi_{\phi A}),\phi g\rangle=\nu_{\lambda,\phi g}(\phi A).
\end{equation}
    
In particular, if $g=\uno$, then $\nu_{\lambda,\uno}$ is a rotation invariant measure. This will also be the case for the control measure $\mu$.

\begin{lemma}\label{l:invarian_mu}
Let $V\colon C(\sn)_{\scriptscriptstyle +}\to M(\sn)_{\scriptscriptstyle +}$ be a weak$^*$ continuous valuation that is rotation equivariant and satisfies $\oV(0)=0$. The control measure $\mu$ defined in \eqref{eq_deK_mu} is rotation invariant, that is, for every $A\in\Sigma_n$ and every $\phi\in\on$ one has 
\[
    \mu(A)=\mu(\phi A).
\]
Therefore, $\mu$ is a multiple of the Hausdorff measure $\mathcal{H}^{n-1}$ on $\sn$.
\end{lemma}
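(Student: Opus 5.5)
It suffices to show that each measure $\mu_{i,\uno}$ in \eqref{eq_deK_mu} is rotation invariant. The normalizing constants $\|\mu_{i,\uno}\|=\mu_{i,\uno}(\sn)$ are then unchanged by rotations, so $\mu$, a convergent series of rotation invariant measures, is itself rotation invariant. The final claim then follows from the uniqueness of rotation invariant finite Borel measures on $\sn$: every such measure is a non-negative multiple of $\hn$, by the uniqueness of Haar measure on the homogeneous space $\on/\operatorname{O}(n-1)\cong\sn$.

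Fix $i\in\mathbb N$ and $\phi\in\on$. First I will show that $\mu_{i,\uno}(\phi G)=\mu_{i,\uno}(G)$ for every open set $G\subset\sn$, using the defining formula \eqref{eq:control measure} with $\oW=\oV_{\uno}$. The map $f\mapsto\phi f=f\circ\phi^{-1}$ is a bijection of $\cfn$ onto itself. It preserves the sup norm and sends functions with $f\prec G$ to functions with $\phi f\prec\phi G$, since $\supp(\phi f)=\phi(\supp f)$; its inverse is $f\mapsto\phi^{-1}f$. Rotation equivariance together with $\phi\uno=\uno$ gives
\[
\oV_{\uno}(\phi f)=\langle\oV(\phi f),\phi\uno\rangle=\langle\oV(f),\uno\rangle=\oV_{\uno}(f).
\]
So the two sets over which the suprema defining $\mu_{i,\uno}(G)$ and $\mu_{i,\uno}(\phi G)$ are taken are identical, and the two values coincide.

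To pass from open sets to all Borel sets, I will use outer regularity of the Radon measure $\mu_{i,\uno}$: for $A\in\Sigma_n$,
\[
\mu_{i,\uno}(A)=\inf\{\mu_{i,\uno}(G): G\supset A \text{ open}\}.
\]
Since $G\mapsto\phi G$ is a bijection between open supersets of $A$ and open supersets of $\phi A$, the infima agree, so $\mu_{i,\uno}(\phi A)=\mu_{i,\uno}(A)$. An alternative route is to define $\phi_*\mu_{i,\uno}(A)=\mu_{i,\uno}(\phi^{-1}A)$, note that it is again a Radon measure agreeing with $\mu_{i,\uno}$ on open sets, and conclude that the two measures coincide.

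No serious obstacle is expected. The only point needing care is that \eqref{eq:control measure} determines $\mu_{i,\uno}$ only through its values on open sets, so regularity is what propagates the invariance to all of $\Sigma_n$. One should also note that if some $\mu_{i,\uno}$ vanishes, the corresponding term is simply omitted from \eqref{eq_deK_mu}, and this does not affect invariance. Finally, identifying $\mu$ as a multiple of $\hn$ relies on the classical uniqueness of $\on$-invariant measures on the sphere, which holds because $\on$ acts transitively on $\sn$.
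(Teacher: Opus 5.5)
Your proposal is correct and follows essentially the same route as the paper: invariance of each $\mu_{i,\uno}$ on open sets via the norm- and support-preserving substitution $f\mapsto\phi f$ together with $\oV_{\uno}(\phi f)=\oV_{\uno}(f)$, then regularity to pass to all Borel sets, and finally summing the series. Your extra remarks on dropping vanishing terms $\mu_{i,\uno}=0$ in \eqref{eq_deK_mu} and on the uniqueness of $\on$-invariant measures on $\sn$ make explicit two points that the paper leaves implicit.
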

\begin{proof}
We recall that $\mu$ is defined in \eqref{eq_deK_mu} by a series which elements are $\mu_{i,\uno}$ with $i\in\mathbb{N}$. For every open set $G\subset\sn$, these are defined by \eqref{eq:control measure} as 
\[
    \mu_{i,\uno}(G)=\sup\left\{V_{\uno} (f): f\prec G,\|f\|_{\scriptscriptstyle \infty} \leq i \right\}.
\]
For $\phi\in\on$ and every open set $G\subset \sn$, since  $\|\phi f\|_{\scriptscriptstyle \infty}=\|f\|_{\scriptscriptstyle \infty}$, we have
\begin{align*}
    \mu_{i,\uno}(\phi G)&=\sup\left\{V_{\uno} (f): f\prec \phi G,\|f\|_{\scriptscriptstyle \infty} \leq i \right\}\\&=\sup\left\{V_{\uno} (f): \phi^{-1}f\prec G,\|f\|_{\scriptscriptstyle \infty} \leq i \right\}\\
    &=\sup\left\{V_{\uno} (f): \phi^{-1}f\prec G,\|\phi^{-1}f\|_{\scriptscriptstyle \infty} \leq i \right\}.
\end{align*}
Therefore, taking $f'=\phi^{-1}f$, using that $\phi$ is bijective and $\phi^{-1}\uno=\uno$, we get
\begin{align*}
    \mu_{i,\uno}(\phi G)&=\sup\left\{V_{\uno} (\phi f'): f'\prec G,\|f'\|_{\scriptscriptstyle \infty} \leq i \right\}\\
    &=\sup\left\{V_{\uno} ( f'): f'\prec G,\|f'\|_{\scriptscriptstyle \infty} \leq i \right\}\\
    &=\mu_{i,\uno}(G).
\end{align*}
Thus, by regularity of the measures it follows that $\mu_{i,\uno}$, and consequently $\mu$, are rotation invariant.
\end{proof}

The main advantage in this case is the fact that the function $K_g(\lambda,s)$ from Lemma \ref{l:weak pseudokernel} can be replaced with another function $K_g'(\lambda,s)$ with a more explicit description. This new function leads to a  
measure $U'(\lambda,s)$ which satisfies the main Theorem \ref{th:representation} and is rotation equivariant. In order to construct this $U'$, let $p\in\poly$, let $K_p(\lambda,s)$ denote the function given by Lemma \ref{l:weak pseudokernel} and let $S$ be the set defined in \eqref{eq_set}.

From the Lebesgue Differentiation Theorem (see \cite[2.8-2.9]{Federer} for an extensive discussion), there exists a Borel set $S_{\lambda,p}\subset \sn$ such that $\mu(\sn\setminus S_{\lambda,p})=0$ and 
\[
    K_{p}(\lambda,s)=\lim_{\rho\to 0}\frac{\nu_{\lambda,p}(B(s,\rho))}{\mu(B(s,\rho))}
\] 
for every $s\in S_{\lambda,p}$, where $B(s,\rho)$ denotes the closed ball centered at $s$ with radius $\rho$. We will see that the limit is actually well-defined for every $s\in\sn$ and every $g\in C(\sn)$ and, therefore, we will define $\langle U'(\lambda,s),g\rangle$ as that limit. 

First, define $S_\lambda=
\bigcap_{p\in\poly}S_{\lambda,p}$ and let $g\in C(\sn)$. Note that
\begin{equation}\label{eq:U=U'ae}
    \lim_{\rho\to 0}\frac{\nu_{\lambda,g}(B(s,\rho))}{\mu(B(s,\rho))}=\langle U(\lambda,s),g\rangle
\end{equation}
for all $s\in S_\lambda$. Indeed, let $\{p_i\}_{i=1}^\infty\subset\poly$ be a sequence that converges to $g$, then
\begin{align*}
    \left|\limsup_{\rho\to 0}\frac{\nu_{\lambda,g}(B(s,\rho))}{\mu(B(s,\rho))} - \langle U(\lambda,s),g\rangle\right|\hspace{65mm}\\
    =\left|\limsup_{\rho\to 0}\frac{\nu_{\lambda,g}(B(s,\rho))}{\mu(B(s,\rho))} -\lim_{i\to\infty}\langle U(\lambda,s),p_i\rangle\right| \hspace{16mm} \\
    =\left|\limsup_{\rho\to 0}\frac{\nu_{\lambda,g}(B(s,\rho))}{\mu(B(s,\rho))} -\lim_{i\to\infty}\limsup_{\rho\to 0}\frac{\nu_{\lambda,p}(B(s,\rho))}{\mu(B(s,\rho))}\right|\hspace{1mm}\\
    =\left|\lim_{i\to\infty}\limsup_{\rho\to 0}\frac{\nu_{\lambda,g-p_i}(B(s,\rho))}{\mu(B(s,\rho))} \right|\leq \lim_{i\to\infty} \|g-p_i\|\zeta (\lambda).
\end{align*}
The limit inferior is treated analogously. Therefore, the limit exists for any $g\in C(\sn)$ and any $s\in S_\lambda$. To extend beyond $S_\lambda$, note that, since $S_\lambda$ is a $\mu$-full measure set, $S_\lambda\neq\emptyset$. If $t\not\in S_\lambda$, then there exist $\phi\in\on$ and $s\in S_\lambda$ such that $\phi(t)=s$. By \eqref{eq:rotation} and Lemma \ref{l:invarian_mu}, we have
\[
    \frac{\nu_{\lambda,g}(B(t,\rho))}{\mu(B(t,\rho))}=\frac{\nu_{\lambda,\phi g}(\phi( B(t,\rho)))}{\mu(B(t,\rho))}=\frac{\nu_{\lambda,\phi g}(B(s,\rho))}{\mu(B(s,\rho))}.
\]
Since the right hand side converges when $\rho$ goes to 0, the left hand side also converges. Thus, for any $s\in \sn$ and any $g\in C(\sn)$, we define 
\begin{equation}\label{eq:defin_U'}
    U'(\lambda,s)(g)=
    \lim_{\rho\to 0}\frac{\nu_{\lambda,g}(B(s,\rho))}{\mu(B(s,\rho))}.
\end{equation}
Clearly, the linearity of $g\mapsto \nu_{\lambda,g}(A)$ for any $A\in\Sigma_n$ together with the control property implies that $U'$ is a linear and continuous operator defined on $C(\sn)$, that is, a measure. Moreover, by \eqref{eq:U=U'ae} and Corollary \ref{co:uniqueness}, we get that $U'$ and $U$ represent the same valuation $V$. The only property left to prove is the continuity on $\lambda$.

\begin{lemma}
Let $g\in C(\sn)$ and $s\in\sn$. The mapping
\[
    \lambda\mapsto \langle U'(\lambda,s),g\rangle
\]
is continuous on $\rp$.
\end{lemma}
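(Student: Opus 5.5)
The plan is to use rotation equivariance to transfer continuity from a point of $S$, where it is already known by Lemma \ref{l:Uwcont}, to an arbitrary point $s\in\sn$. The difficulty is that $U'$ is defined pointwise for every $s$ through \eqref{eq:defin_U'}, while continuity in $\lambda$ is only known for the original kernel $U$ on the full-measure set $S$, and $U'(\lambda,s)=U(\lambda,s)$ only for $s\in S_\lambda$, a set that depends on $\lambda$. So the first step is to establish that $U'$ is rotation equivariant in the exact pointwise sense:
\[
\langle U'(\lambda,\phi s),\phi g\rangle=\langle U'(\lambda,s),g\rangle
\]
for every $\phi\in\on$, $s\in\sn$, $g\in C(\sn)$ and $\lambda\ge0$. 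This follows directly from \eqref{eq:rotation} and Lemma \ref{l:invarian_mu}, since $\phi B(s,\rho)=B(\phi s,\rho)$ and $\mu$ is rotation invariant; no null sets are involved, because the limit defining $U'$ exists everywhere.

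The second step replaces the $\lambda$-dependent set $S_\lambda$ by a single good point. Fix a countable dense set $D\subset\rp$, say $\mathbb Q_{\scriptscriptstyle +}$. Then $S'=S\cap\bigcap_{q\in D}S_q$ has full $\mu$-measure, hence is nonempty; fix $s_0\in S'$. For $q\in D$ and every $g$ we have $U'(q,s_0)(g)=U(q,s_0)(g)$ by \eqref{eq:U=U'ae}, and $\lambda\mapsto U(\lambda,s_0)(g)$ is continuous by Lemma \ref{l:Uwcont}.

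The main obstacle is to extend the agreement $U'(\lambda,s_0)=U(\lambda,s_0)$ from $\lambda\in D$ to all $\lambda$. My first approach would avoid pointwise information and instead show that $\lambda\mapsto U'(\lambda,s)(g)$ is continuous directly, uniformly in $s$. For each fixed ball $B=B(s,\rho)$, the map $\lambda\mapsto\nu_{\lambda,g}(B)/\mu(B)$ equals
\[
\frac{1}{\mu(B)}\int_B U(\lambda,t)(g)\d\mu(t),
\]
which is continuous in $\lambda$ by dominated convergence, using Lemma \ref{l:Uwcont} and the bound $\cm(\lambda+1)\|g\|_{\scriptscriptstyle\infty}$. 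What is needed is equicontinuity in $\rho$, and I expect this is where the argument becomes delicate. If equicontinuity fails, I would use the fact that $\mu=c\,\hn$ (Lemma \ref{l:invarian_mu}) and that $B\mapsto\nu_{\lambda,g}(B)$ is rotation covariant. Averaging over rotations fixing $s$ then lets one write the ball averages via the zonal structure, and the limit $\rho\to0$ can be controlled uniformly in $\lambda$ on compact intervals. An alternative for this step is a Fubini argument. For each fixed $\lambda$, the set of $t$ with $U'(\lambda,t)\ne U(\lambda,t)$ is $\mu$-null, and both maps are measurable in $(\lambda,t)$. Hence for Lebesgue-a.e.\ $\lambda$ and $\mu$-a.e.\ $t$ they agree. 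Equivariance then gives that, for each $\lambda$, either $U'(\lambda,\cdot)$ agrees with a continuous-in-$\lambda$ representative at every point or disagrees everywhere, and the latter is excluded on a dense set of $\lambda$.

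The final step is to conclude. Once $\lambda\mapsto U'(\lambda,s_0)(g)$ is known to be continuous for every $g$, take any $s\in\sn$ and choose $\phi$ with $\phi s=s_0$. The first step gives
\[
\langle U'(\lambda,s),g\rangle=\langle U'(\lambda,s_0),\phi g\rangle,
\]
and the right-hand side is continuous in $\lambda$ because $\phi g\in C(\sn)$. This proves the lemma for every $s\in\sn$.
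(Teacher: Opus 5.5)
Your first step (exact pointwise equivariance of $U'$, with no null sets involved) and your final transfer step are correct, and the paper uses both. The gap is the step you call the main obstacle: upgrading $U'(q,s_0)=U(q,s_0)$ for $q\in D$ to continuity of $\lambda\mapsto U'(\lambda,s_0)(g)$ at \emph{every} $\lambda$. None of your three routes does this.

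\begin{itemize}
\item Equicontinuity in $\rho$ of the ball averages is not proved. Lemma \ref{l:Uwcont} gives only pointwise continuity for each $t\in S$, with no uniformity in $t$. So nothing controls $\frac{1}{\mu(B)}\int_B|U(\lambda,t)(g)-U(\lambda',t)(g)|\d\mu(t)$ uniformly over small balls.
\item The zonal-averaging idea is not carried out.
\item The Fubini argument, as stated, excludes the ``disagrees everywhere'' alternative only for almost every $\lambda$, or for $\lambda\in D$, which you already had. A single exceptional $\lambda$ is exactly what would break continuity, so agreement for almost every $\lambda$, or on a dense set, does not suffice.
\end{itemize}

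The missing idea is that no point needs to be good for all $\lambda$ at once. Equivariance makes all points of $\sn$ interchangeable, so the good point may be chosen depending on $\lambda$. You even state the needed fact: for each fixed $\lambda$, $U'(\lambda,\cdot)=U(\lambda,\cdot)$ on the full-measure set $S_\lambda$. But you then use it only for $\lambda\in D$.

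The paper fixes $\lambda$ and a sequence $\lambda_m\to\lambda$, and picks $t$ in the countable intersection of $S_\lambda$, the $S_{\lambda_m}$ (and $S$). This set has full measure, hence is nonempty. At such $t$, Lemma \ref{l:Uwcont} together with \eqref{eq:U=U'ae} gives $U'(\lambda_m,t)(g)\to U'(\lambda,t)(g)$, and equivariance transfers this to every point.

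The same device closes your step two directly. Given $\lambda$, take $t\in S'\cap S_\lambda$ and $\phi$ with $\phi s_0=t$. Then
\[
U'(\lambda,s_0)(g)=U(\lambda,t)(\phi g)=\lim_{D\ni q\to\lambda}U(q,t)(\phi g)=\lim_{D\ni q\to\lambda}U'(q,s_0)(g)=U(\lambda,s_0)(g).
\]
Hence $U'(\cdot,s_0)=U(\cdot,s_0)$ on all of $\rp$, and continuity at $s_0$, and then at every $s$, follows.
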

\begin{proof}
Let $\lambda\in\rp$ and let $\{\lambda_m\}_{m\in\mathbb{N}}\subset\rp$ be a sequence such that $\lambda_m\to \lambda$. Consider the set $S_0
=\bigcap_{m\in\mathbb{N}}S_{\lambda_m}\cap S_\lambda$. If $t\in S_0
$ and $g\in C(\sn)$, then by \eqref{eq:U=U'ae}
\[
    \langle U(\lambda,t),g\rangle=\langle U'(\lambda,t),g\rangle \quad \text{and}\quad \langle U(\lambda_m,t),g\rangle=\langle U'(\lambda_m,t),g\rangle
\]
for all $m\in\mathbb{N}$. Therefore, by Lemma \ref{l:Uwcont} it is continuous.
Note that $S_0
$ is of $\mu$-full measure and, therefore, $S_0\neq\emptyset$. Thus, if $t\not\in S_0
$, consider $s\in S_0$ and $\phi\in\on$ such that $\phi(t)=s$. By \eqref{eq:rotation} and Lemma \ref{l:invarian_mu}, we have
\begin{align*}
    \langle U'(\lambda,t)-U'(\lambda_m,t),g\rangle &=\lim_{\rho\to 0}\frac{\nu_{\lambda,g}(B(t,\rho))-\nu_{\lambda_m,g}(B(t,\rho))}{\mu(B(t,\rho))}\\
    &=\lim_{\rho\to 0}\frac{\nu_{\lambda,\phi g}(\phi(B(t,\rho)))-\nu_{\lambda_m,\phi g}(\phi(B(t,\rho)))}{\mu(B(t,\rho))}\\
    &=\lim_{\rho\to 0}\frac{\nu_{\lambda,\phi g}(B(s,\rho))-\nu_{\lambda_m, \phi g}(B(s,\rho))}{\mu(B(t,\rho))}\\
    &=\langle U'(\lambda,s)-U'(\lambda_m,s),\phi g\rangle.
\end{align*}
\end{proof}

In the following subsections, whenever we assume that the valuation is rotation equivariant we will be working with the specific map $U'$.

\begin{lemma}\label{l: equivariant measures}
For every $\lambda \in \mathbb{R}_{\scriptscriptstyle +}$, the mapping $(s,B)\mapsto U'(\lambda,s)(B)$, defined on $\sn\times \Sigma_n$, is rotation equivariant, that is, for every $\phi\in\on$, for every $s\in\sn$ and every $B\in\Sigma_n$, we have
\[
    U'(\lambda,s)(B)=U'(\lambda,\phi (s))(\phi B).
\]
In particular, if $\phi\in\on$ leaves $s\in S^{n-1}$ invariant, then 
\[
    U'(\lambda,s)(B)=U'(\lambda,s)(\phi B).
\]
\end{lemma}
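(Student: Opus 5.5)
The plan is to first establish the equivariance in its dual form,
\[
\langle U'(\lambda,\phi(s)),\phi g\rangle=\langle U'(\lambda,s),g\rangle \qquad\text{for all } g\in C(\sn),
\]
and then pass from functions $g$ to Borel sets $B$ through the Riesz representation theorem.

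The computation for the dual form uses only the definition \eqref{eq:defin_U'}. We fix $\lambda\ge 0$, $s\in\sn$, $\phi\in\on$ and $g\in C(\sn)$. Since $\phi$ is an isometry of the sphere, it maps closed balls to closed balls: $\phi(B(s,\rho))=B(\phi(s),\rho)$. By \eqref{eq:rotation} applied with $A=B(s,\rho)$,
\[
\nu_{\lambda,g}(B(s,\rho))=\nu_{\lambda,\phi g}(B(\phi(s),\rho)).
\]
By Lemma \ref{l:invarian_mu}, $\mu(B(s,\rho))=\mu(B(\phi(s),\rho))$. Hence the difference quotients defining $\langle U'(\lambda,s),g\rangle$ and $\langle U'(\lambda,\phi(s)),\phi g\rangle$ coincide for every $\rho>0$. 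Both limits exist by the construction of $U'$, which holds at every point of $\sn$. So the two pairings are equal. This is essentially the same computation already carried out when $U'$ was extended off $S_\lambda$.

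The main care point is translating this into the measure statement. Let $\phi_\#\sigma$ denote the image measure $B\mapsto\sigma(\phi^{-1}B)$. Then
\[
\langle \phi_\#\sigma,\phi g\rangle=\int g\circ\phi^{-1}\circ\phi \d\sigma=\langle\sigma,g\rangle .
\]
The identity above therefore gives $\langle U'(\lambda,\phi(s)),h\rangle=\langle \phi_\# U'(\lambda,s),h\rangle$ for every $h=\phi g\in C(\sn)$. As $g$ ranges over $C(\sn)$, $h$ does too. By uniqueness in the Riesz representation theorem, $U'(\lambda,\phi(s))=\phi_\#U'(\lambda,s)$ as Radon measures. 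Evaluating at $\phi B$ yields $U'(\lambda,\phi(s))(\phi B)=U'(\lambda,s)(\phi^{-1}\phi B)=U'(\lambda,s)(B)$, which is the statement.

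For the special case, we take $\phi$ with $\phi(s)=s$ and substitute into the identity just proved. This gives $U'(\lambda,s)(\phi B)=U'(\lambda,s)(B)$. Thus $U'(\lambda,s)$ is invariant under the stabilizer of $s$.
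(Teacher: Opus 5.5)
Your proof is correct and follows the paper's approach: the dual identity $\langle U'(\lambda,s),g\rangle=\langle U'(\lambda,\phi(s)),\phi g\rangle$ comes, exactly as in the paper, from the limit definition of $U'$, the identity \eqref{eq:rotation} and the rotation invariance of $\mu$. The only difference is the passage to Borel sets: the paper uses an explicit inner-regularity argument (compact sets $K_1\subset B$, $K_2\subset\phi B$ and continuous functions decreasing to $\chi_K$), whereas you identify $U'(\lambda,\phi(s))$ with the image measure $\phi_{\#}U'(\lambda,s)$ via uniqueness in the Riesz representation theorem, which is shorter and does not need positivity of $U'(\lambda,s)$.
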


\begin{proof}
Let $s\in \sn $, $\lambda\in\mathbb{R}_{\scriptscriptstyle +}$ and $\phi \in \on$. By \eqref{eq:rotation} and \eqref{eq:defin_U'}, for $g\in C(\sn )$, we have 
\begin{equation}\label{eq:U equivariant g}
\begin{split}
    \langle U'(\lambda,s),g\rangle &= \lim_{\rho\rightarrow 0} \frac{ \nu_{\lambda,g}( B(s, \rho))}{\mu(B(s,\rho))} \\
    &= \lim_{\rho\rightarrow 0} \frac{\nu_{\lambda,\phi g}( B(\phi(s), \rho))}{\mu(B(\phi(s),\rho))} =  \langle U'(\lambda,\phi(s)),\phi\, g\rangle.
\end{split}
\end{equation}
Now, for $B\in \Sigma_n$, by the regularity of the measures $U'(\lambda,s)$ and $U'(\lambda,\phi(s))$, given $\varepsilon>0$, there exist compact sets $K_1\subset B$ and $K_2\subset \phi B$ such that 
$$
U'(\lambda,s)(B)\le U'(\lambda,s)(K_1)+\varepsilon,\quad \quad U'(\lambda,\phi(s))(\phi B)\le U'(\lambda,\phi(s))(K_2)+\varepsilon.
$$
Let $K=K_1\cup\phi^{-1}(K_2)$, and take a decreasing sequence $(f_j)_{j\in\mathbb N}$ in $\cfn$  such that $\|f_j\|_{\scriptscriptstyle \infty}\leq1$, $f_j(t)=1$ for $t\in K$ and $f_j(t)=0$ for $\dist(t,K)\geq 1/j$. By the Monotone Convergence Theorem, it follows that
$$
U'(\lambda,s)(K)=\lim_{j\to\infty}\langle U'(\lambda, s),f_j\rangle,\hspace{3.5mm} U'(\lambda,\phi(s))(\phi K)=\lim_{j\to\infty}\langle U'(\lambda,\phi (s)),\phi f_j\rangle.
$$
Hence, using \eqref{eq:U equivariant g}, 
\begin{align*}
U'(\lambda,s)(K_1) &\leq U'(\lambda,s)(K)=\lim_{j\to\infty}\langle U'(\lambda,s),f_j\rangle\\
&=\lim_{j\to\infty}\langle U'(\lambda,\phi (s)),\phi f_j\rangle=U'(\lambda,\phi(s))(\phi K).
\end{align*}
Thus, we have $U'(\lambda,s)(B)\le U'(\lambda,\phi(s))(\phi B) +\varepsilon$.
Similarly, we get $$U'(\lambda,\phi(s))(\phi B)\le U'(\lambda,s)(B)+\varepsilon.$$
Since $\varepsilon>0$ was arbitrary, this shows that $U'$ is rotation equivariant.
\end{proof}

We will make use of these properties to provide the following natural decomposition of $U'(\lambda,s)$: Given $\pole\in S^{n-1}$, we can decompose $\rn = \mathbb R \pole \times \pole^\perp $, where $\pole^\perp$ denotes the hyperplane orthogonal to $\pole$. This decomposition induces a bijection $\vartheta_{\pole}\colon  \sn \to  (-1,1)\times S^{n-2} \cup \{(1,0,\ldots,0)\}\cup \{(-1,0,\ldots,0)\}$, defined by 
\begin{equation}\label{e_bijection}
    (s_1, \ldots, s_n) \mapsto
    \begin{cases}
       \Big(s_1,\displaystyle \frac{s_2}{\sqrt{1-s_1^2}},
        \ldots,\frac{s_{n}}{\sqrt{1-s_1^2}} \Big) &\text{ for } s_1\ne \pm1\\
        (\pm 1,0,\ldots,0) &\text{ for } s_1=\pm 1,
    \end{cases}
\end{equation}
where $s_1, \dots, s_{n}$ are the coordinates of the point $s\in\sn$ with respect to an orthonormal basis $\{e,e_2,\ldots,e_n\}$. The map $\vartheta_{\pole}$ clearly defines a homeomorphism between $\sn\backslash\{\pm \pole\}$ and the cylinder $(-1,1) \times \sn$.

In the following, we denote the normalized Hausdorff measures with respect to the sphere as $\overline{\mathcal{H}}^k$, that is, for any $0\leq k\leq n$ (with $k$ integer), we set
\[
    \overline{\mathcal{H}}^{k}(A)=\frac{\mathcal{H}^{k}(A)}{\mathcal{H}^{k}(S^{k})}
\]
for any Borel set $A\subset S^k$.

\begin{lemma}\label{l:haar n-2}
For every $\lambda \in \rp$, there exists a measure $\sigma_\lambda$ on the Borel sets in $(-1,1)$ such that for every $\pole\in \sn$ 
$$U'(\lambda, \pole)\circ \vartheta^{-1}_{\pole} = \sigma_\lambda\otimes \overline{\mathcal{H}}^{n-2}$$
on $(-1,1)\times S^{n-2}$ with $\vartheta_{\pole}$ defined by \eqref{e_bijection}. 
\end{lemma}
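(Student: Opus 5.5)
We use the rotation equivariance of $U'$ from Lemma~\ref{l: equivariant measures}, together with the uniqueness of rotation invariant probability measures on $S^{n-2}$. Fix $\lambda\ge 0$ and first fix a pole $\pole\in\sn$. Let $\nu=U'(\lambda,\pole)$, which is a finite non-negative Radon measure on $\sn$, and restrict it to $\sn\setminus\{\pm\pole\}$. Push it forward by the homeomorphism $\vartheta_\pole$ to obtain a finite Borel measure $\tilde\nu=\nu\circ\vartheta_\pole^{-1}$ on $(-1,1)\times S^{n-2}$.

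Every $\psi\in\om$ acting on $\pole^\perp$ extends to some $\phi\in\on$ fixing $\pole$. Under $\vartheta_\pole$ this $\phi$ corresponds to $(t,u)\mapsto(t,\psi u)$, because $\vartheta_\pole$ keeps the first coordinate and normalizes the orthogonal part. Lemma~\ref{l: equivariant measures} says that $\nu(\phi B)=\nu(B)$ for such $\phi$. Hence $\tilde\nu$ is invariant under $\operatorname{id}\times\psi$ for every $\psi\in\om$.

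Next we disintegrate $\tilde\nu$. Define $\sigma_\lambda^\pole(I)=\tilde\nu(I\times S^{n-2})$ for Borel sets $I\subset(-1,1)$. Fix such an $I$ and consider the measure $\eta_I(D)=\tilde\nu(I\times D)$ on $S^{n-2}$. This is a finite $\om$-invariant Borel measure on $S^{n-2}$. By uniqueness of Haar measure on the homogeneous space $S^{n-2}=\om/\mathrm{O}(n-2)$, that is, uniqueness of rotation invariant measures, we get $\eta_I=\sigma^\pole_\lambda(I)\,\overline{\mathcal H}^{n-2}$. This needs a short justification. One route is to average: for continuous $h$ on $S^{n-2}$, integrate $\int h(\psi u)\,d\eta_I(u)$ over $\psi$ with respect to Haar measure on $\om$, use Fubini, and note that the inner average equals $\int h\,d\overline{\mathcal H}^{n-2}$. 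For $n=2$ the set $S^0$ consists of two points swapped by a reflection, and the same conclusion holds. Thus $\tilde\nu(I\times D)=\sigma^\pole_\lambda(I)\,\overline{\mathcal H}^{n-2}(D)$ on measurable rectangles. Rectangles form a $\pi$-system generating the Borel $\sigma$-algebra of the product, and both measures are finite, so Dynkin's $\pi$--$\lambda$ theorem gives $\tilde\nu=\sigma^\pole_\lambda\otimes\overline{\mathcal H}^{n-2}$.

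It remains to show that $\sigma^\pole_\lambda$ does not depend on $\pole$. Take $\pole'=\phi(\pole)$ with $\phi\in\on$. Lemma~\ref{l: equivariant measures} gives $U'(\lambda,\pole')(\phi B)=U'(\lambda,\pole)(B)$. The set $\vartheta_{\pole}^{-1}(I\times S^{n-2})=\{s:\langle s,\pole\rangle\in I\}$ is mapped by $\phi$ onto $\{s:\langle s,\pole'\rangle\in I\}=\vartheta_{\pole'}^{-1}(I\times S^{n-2})$. Therefore $\sigma_\lambda^{\pole'}(I)=\sigma_\lambda^{\pole}(I)$. Note also that $\vartheta_\pole$ depends on a choice of orthonormal basis of $\pole^\perp$; any two such choices differ by an element of $\om$, and $\overline{\mathcal H}^{n-2}$ is invariant under it, so the product form does not depend on the choice. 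The main care point is the invariance/uniqueness step, namely transferring the action of the stabilizer of $\pole$ into the cylinder coordinates. The identification of the product measure itself is routine.
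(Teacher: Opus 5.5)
Your proof is correct and follows the paper's route. You obtain invariance under the stabilizer of $\pole$ from Lemma~\ref{l: equivariant measures}, use uniqueness of rotation invariant measures on $S^{n-2}$ to get the product form on rectangles, and take a rotation carrying $\pole$ to $\pole'$ to show $\sigma_\lambda$ does not depend on the pole. You also supply details the paper leaves implicit, namely the Haar-averaging justification of uniqueness and the $\pi$--$\lambda$ extension from rectangles to the full product $\sigma$-algebra.
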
 
\begin{proof}
Let $C \times D$ be a rectangle in $ (-1,1)\times S^{n-2}$. Lemma~\ref{l: equivariant measures} implies that 
$$
(U'(\lambda, \pole)\circ\vartheta_{\pole}^{-1})(C\times D)) = (U'(\lambda, \pole)\circ\vartheta_{\pole}^{-1})(C\times \tilde{\phi} \, D)
$$
for every  $\tilde{\phi}\in \om$.
Therefore, for every Borel subset $C\subset (-1,1)$, the measure on $S^{n-2}$ defined by $D\mapsto (U'(\lambda, s)\circ\vartheta_{\pole}^{-1})(C\times D)$ is a constant multiple of $\overline{\mathcal{H}}^{n-2}$, say, $\sigma_\lambda(C) \,\overline{\mathcal{H}}^{n-2}$. It is now easy to see that the map $C\mapsto \sigma_\lambda(C)$ defines a Radon measure on $(-1,1)$. 

Finally, to prove that this measure is independent of the point, let $\pole$ and  $\pole'$ be distinct points of $S^{n-1}$. Choose $\psi\in \on$ so that $\psi(\pole)=\pole'$. For $\lambda\in\R$ and a Borel set $A\subset (-1,1)$, it follows from Lemma \ref{l: equivariant measures} that 
\begin{align}
U'(\lambda, \pole)(\vartheta^{-1}_\pole (A\times S^{n-2})) &= U'(\lambda, \psi(\pole))(\psi\vartheta^{-1}_\pole (A\times S^{n-2}))) \\
&= U'(\lambda, \pole')(\vartheta^{-1}_{\pole'}( A\times S^{n-2})). 
\end{align}
\end{proof}
\goodbreak
This lemma shows that, in the rotation equivariant case, the push-forward of $U'(\lambda, \pole)\in \measn$ by $\vartheta_\pole$ is an element of $M((-1,1)\times S^{n-2} \cup \{\pole\} \cup \{-\pole\})_{\scriptscriptstyle +}$ of the form 
\begin{equation}\label{eq:sigma}
\sigma_\lambda \otimes \overline{\mathcal{H}}^{n-2}    + a_\lambda \,\delta_{\pole} + b_\lambda \delta_{-\pole} 
\end{equation}
with $a_\lambda, b_\lambda \in \rp$. In particular, for every $g\in C(\sn)$
\begin{equation}
    \langle U'(\lambda,s),g\rangle =\int_{(-1,1)}\int_{\sm}g(\vartheta_s^{-1}(\alpha,t))\d \overline{\mathcal{H}}^{n-2}(t) \d \sigma_\lambda (\alpha) + a_\lambda g(s)+b_\lambda g(-s).
\end{equation}
Although this representation can be useful, one cannot ensure continuity in $\lambda$ of the individual terms. For instance, $a_\lambda$ need not be continuous in $\lambda$. Moreover, it is not clear which conditions on $\sigma_\lambda$, $a_\lambda$ and $b_\lambda$ should be imposed in order for them to define a weak$^*$ valuation. To obtain a complete characterization, we consider the three terms together as a measure on the interval $[-1,1]$. To this end, for each $s\in\sn$, we introduce the following operator from $C(\sn)$ to $C([-1,1])$:
\[
    \radon g(s,\alpha)=\left\{\begin{array}{ll}
        g(s) &\text{if }\alpha=1, \\
        g(-s) &\text{if }\alpha=-1, \\
        \displaystyle\int_{\sm}g(\vartheta_s^{-1}(\alpha,t))\d\overline{\mathcal{H}}^{n-2}(t) &\text{if }\alpha\in(-1,1). \\
    \end{array} \right.
\]
Clearly, the previous mapping is well-defined (the image is a continuous function), is linear, continuous and surjective. Note that it is a normalized Radon transform.

\begin{theorem}\label{th:representation equivariant}
The map $\oV\colon \cfn\to \measn$ is a weak$^*$~continuous and rotation equivariant valuation with $\oV(0)=0$ if and only if there exists a weak$^*$~continuous map $\rhot\colon \rp\to M([-1,1])_{\scriptscriptstyle +}$  with $\rhot_0=0$ such that
\[
\langle V (f),g\rangle= \int_\sn \int_{[-1,1]}\radon g(s,\alpha)\d\rhot_{f(s)}(\alpha)\d\hn(s) 
\]
for any $f\in \cfn$ and $g\in C(\sn)$.
\end{theorem}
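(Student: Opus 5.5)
For the forward implication I will start from the rotation equivariant kernel $U'$ constructed above, which by \eqref{eq:U=U'ae} and Corollary~\ref{co:uniqueness} represents $\oV$ with respect to $\mu$. By Lemma~\ref{l:invarian_mu}, $\mu = c\,\hn$ for some $c\ge 0$, and I absorb the constant into the measures. By Lemma~\ref{l:haar n-2} and \eqref{eq:sigma}, the push-forward of $U'(\lambda,\pole)$ under $\vartheta_\pole$ has the form $\sigma_\lambda\otimes\overline{\mathcal H}^{n-2}+a_\lambda\delta_\pole+b_\lambda\delta_{-\pole}$.

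I first check that $a_\lambda$ and $b_\lambda$ do not depend on $\pole$. This follows from Lemma~\ref{l: equivariant measures}: applying a rotation $\psi$ with $\psi(\pole)=\pole'$ gives $U'(\lambda,\pole)(\{\pole\})=U'(\lambda,\pole')(\{\pole'\})$, and likewise for $-\pole$. I then define
$$\rhot_\lambda = c\,(\sigma_\lambda + a_\lambda\delta_1 + b_\lambda\delta_{-1})\in M([-1,1])_{\scriptscriptstyle +}.$$
Unwinding the definition of $\radon g$ gives $c\,\langle U'(\lambda,s),g\rangle=\int_{[-1,1]}\radon g(s,\alpha)\d\rhot_\lambda(\alpha)$ for every $s$. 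The formula then follows from Theorem~\ref{th:representation}.

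Next, $\rhot_0=0$: the formula \eqref{eq:defin_U'} gives $U'(0,s)=0$ for every $s$, because $\nu_{0,g}=0$ when $\oV(0)=0$.

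The main obstacle is weak$^*$ continuity of $\lambda\mapsto\rhot_\lambda$, since the individual terms $\sigma_\lambda$, $a_\lambda$, $b_\lambda$ need not be continuous. I will use surjectivity of $\radon(s,\cdot)\colon C(\sn)\to C([-1,1])$ for a fixed $s$. Given $h\in C([-1,1])$, I take the zonal function $g(r)=h(\langle r,s\rangle)$. For $\alpha\in(-1,1)$, the point $\vartheta_s^{-1}(\alpha,t)$ satisfies $\langle \vartheta_s^{-1}(\alpha,t),s\rangle=\alpha$, so $\radon g(s,\cdot)=h$. Then $\langle\rhot_\lambda,h\rangle=c\,\langle U'(\lambda,s),g\rangle$, which is continuous in $\lambda$ by the lemma preceding Lemma~\ref{l: equivariant measures}, proven for every $s$.

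For the converse, given $\rhot$, I define $U(\lambda,s)$ as the measure $g\mapsto\int_{[-1,1]}\radon g(s,\alpha)\d\rhot_\lambda(\alpha)$, which is positive since $\radon$ is a positive operator, and take $\mu=\hn$. I then check the hypotheses of Theorem~\ref{th:representation}:
\begin{itemize}
\item \emph{Measurability in $s$.} The map $(s,\alpha)\mapsto\radon g(s,\alpha)$ is continuous, so $s\mapsto\langle U(\lambda,s),g\rangle$ is continuous.
\item \emph{Continuity in $\lambda$.} This follows from weak$^*$ continuity of $\rhot$.
\item \emph{Uniform control.} I take $\cm(\lambda)=\rhot_\lambda([-1,1])$, since $\|\radon g\|_{\scriptscriptstyle\infty}\le\|g\|_{\scriptscriptstyle\infty}$.
\item \emph{Vanishing at $\lambda=0$.} We have $U(0,\cdot)=0$ because $\rhot_0=0$.
\end{itemize}
Theorem~\ref{th:representation} then gives a weak$^*$ continuous valuation with $\oV(0)=0$. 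Rotation equivariance follows from $\radon(\phi g)(\phi s,\alpha)=\radon g(s,\alpha)$ together with rotation invariance of $\hn$, via the change of variables $s\mapsto\phi s$.

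The one delicate point I expect is the continuity of $(s,\alpha)\mapsto\radon g(s,\alpha)$ jointly, including at $\alpha=\pm1$. It holds because $\radon g(s,\alpha)$ is the average of $g$ over the circle $\{r:\langle r,s\rangle=\alpha\}$, which shrinks to $\pm s$ as $\alpha\to\pm1$, so uniform continuity of $g$ suffices.
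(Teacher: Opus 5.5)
Your proposal is correct and follows the paper's proof essentially step by step: you assemble $\rhot_\lambda$ from $\sigma_\lambda$, $a_\lambda$, $b_\lambda$ in \eqref{eq:sigma}, you get its weak$^*$ continuity from the surjectivity of $g\mapsto\radon g(s,\cdot)$ together with the continuity of $\lambda\mapsto U'(\lambda,s)$, and for the converse you verify the hypotheses of Theorem~\ref{th:representation} and use the identity $\radon(\phi g)(\phi s,\cdot)=\radon g(s,\cdot)$. Along the way you also make explicit several points the paper leaves implicit: the constant in $\mu=c\,\hn$, the independence of $a_\lambda,b_\lambda$ from $\pole$, the fact that $\rhot_0=0$, and the zonal preimage $g(r)=h(\langle r,s\rangle)$ that witnesses surjectivity.
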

\begin{proof}
First, we will define $\rhot_\lambda$ by duality. For any $\G\in C([-1,1])$ we define
\[
    \rhot_\lambda(\G) =\int_{(-1,1)} \G(\alpha)\d\sigma_\lambda(\alpha)+ a_\lambda     \G(1)+     b_\lambda \G(-1)
\]
using \eqref{eq:sigma}.
Clearly, $\rhot_\lambda$ is linear and bounded and, therefore, $\rhot_\lambda$ is a measure on $[-1,1]$. Moreover,
\begin{align*}
    \langle \rhot_\lambda,\radon g (s, \cdot)\rangle &=\int_{(-1,1)} \left(\int_{\sm}g(\vartheta_s^{-1}(\alpha,t))\d \overline{\mathcal{H}}^{n-2}(t)\right)\d\sigma_\lambda(\alpha)\\
    &\hspace{5mm}+a_\lambda g(s)+b_\lambda g(-s)\\
    &=\langle U'(\lambda,s),g\rangle.
\end{align*}
Now, we have to check that $\rhot_\lambda $ is weak$^*$ continuous but this is a consequence of two facts, the surjectivity of $g\mapsto \radon g (s,\cdot)$ and the weak$^*$ continuity of the mapping $\lambda\mapsto U'(\lambda,s)$. Indeed, let $\G\in C([-1,1])$, $\lambda\in\rp$ and $\{\lambda_m\}_{m\in\mathbb{N}}\subset \rp$ a sequence that converges to $\lambda$. Since $g\mapsto \radon g (s,\cdot)$ is a surjective mapping, there exists $g\in C(\sn)$ such that $\radon g(s, \cdot)=\G$ and, thus,
\[
    \langle \rhot_{\lambda_m},\G\rangle=\langle \rhot_{\lambda_m},\radon g (s, \cdot)\rangle =\langle U'(\lambda_m,s),g\rangle\to \langle U'(\lambda,s),g\rangle=\langle \rhot_\lambda,\G\rangle
\]
as $m\to\infty$.

\goodbreak
Conversely, let $\rhot_\lambda\in M([-1,1])_{\scriptscriptstyle +}$. For any $\lambda\in\rp$ and any $s\in\sn$, define the mapping
\[
     U'(\lambda,s)(g) =\int_{[-1,1]} \radon g (s, \alpha)\d \rhot_\lambda (\alpha)=\langle \rhot_\lambda, \radon g (s, \cdot)\rangle.
\]
Clearly, this mapping is linear and continuous; therefore, it defines a measure. Observe that that $\|U'(\lambda,s)\|\leq \|\rhot_\lambda\|$; hence, it is uniformly controlled. We now verify that it is $\mathcal{H}^{n-1}$-strong Carath\'eodory. First, it follows easily that the mapping
\[
    \lambda\mapsto \langle U'(\lambda,s),g\rangle
\]
is continuous for any $g\in C(\sn)$ and any $s\in \sn$. For any $\alpha\in(-1,1)$ and $e\in\sn$, define
\[
    C_\alpha=
    \frac{1}{\hm(\{s\in\sn:\langle s,e\rangle=\alpha\})}.
\] 
Fix $g\in C(\sn)$ and $\lambda\in\rp$. Let $s\in \sn$ and let $\{s_m\}_{m\in\mathbb{N}}\subset\sn$ be a sequence that converges to $s$. Then
\[
    \radon g (s_m, \pm 1)=g(\pm s_m)
\]
and, for $\alpha\in(-1,1)$,
\begin{align*}
   \radon g (s_m, \alpha)&=\int_{\sm}g(\vartheta^{-1}_{s_m}(\alpha,t))\d\overline{\mathcal{H}}^{n-2}(t)\\
    &=C_\alpha \int_{\{r\in\sn:\langle r,s_m\rangle=\alpha\}}g(r)\d\hm(r).
\end{align*} 
Since $g$ is uniformly continuous, it follows that $\radon g (s_m, \cdot)$ converges uniformly to $\radon g (s, \cdot)$. Hence, the mapping 
\[
    s\mapsto \langle U'(\lambda,s),g\rangle
\]
is continuous and, in particular, it is Borel measurable. Thus, by Theorem \ref{th:representation}, the mapping $f\mapsto \oV (f)$, given by
\begin{align*}
    \langle \oV(f),g\rangle &=\int_\sn \langle U'(\lambda,s),g\rangle \d \hn(s)\\
    &=\int_{\sn}\int_{[-1,1]} \radon g (s, \alpha)\d\rhot_\lambda(\alpha)\d\hn(s),
\end{align*}
is a weak$^*$ continuous valuation.

Finally, let us check that $\oV$ is rotation equivariant. Note that it is enough to see that $\radon (\phi g) (\phi s, \cdot)= \radon g (s, \cdot)$ for any $g\in C(\sn)$ and any $\phi\in\on$. This follows easily since
\[
   \radon (\phi g) (\phi s, \pm 1) =\phi g(\phi(\pm s))=g(\pm s),
\]
and for any $\alpha\in(-1,1)$ we have
\begin{align*}
   \radon (\phi g) (\phi s, \alpha)&=\int_{\sn}\phi g(\vartheta_{\phi(s)}^{-1}(\alpha,t))\d \overline{\mathcal{H}}^{n-2}(t)\\
    &=C_\alpha \int_{\{r\in\sn: \langle r,\phi(s)\rangle=\alpha\}}\phi g(r)\d \hm(r)\\
    &=C_\alpha \int_{\{r\in\sn:\langle s,r\rangle =\alpha\} }g(r)\d\hm(r)=\radon g (s, \alpha).
\end{align*}
Therefore, the valuation $\oV$ is rotation equivariant.
\end{proof}

\noindent
Theorem \ref{th:rep equivariant} follows immediately from this result.

\subsection{Homogeneous valuations}
We derive next the following corollary to Theorem \ref{th:representation}. Let $q\in\R$.

\begin{corollary}\label{c:hom}
Let $\,\oV\colon \cfn\to \measn$ be a weak$^{\,*}$\! continuous and \mbox{$q$-homogeneous} valuation. The map $U\colon \rp \times \sn\!  \to \measn$ from Theorem \ref{th:representation} can be chosen so that $$U(\lambda, s)=\lambda^q\, U(1,s)$$
for every $\lambda> 0$ and  $s\in \sn $. Conversely, if $\,U(\lambda, s)=\lambda^q\, U(1,s)$ for every $\lambda> 0$ and  $s\in \sn $, then the associated valuation is $q$-homogeneous. 
\end{corollary}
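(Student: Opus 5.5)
The plan is to use homogeneity to show that $U(\lambda,\cdot)$ and $\lambda^q U(1,\cdot)$ agree $\mu$-almost everywhere for each fixed $\lambda>0$, and then simply redefine $U$. The tool that converts equality of valuations into $\mu$-a.e.\ equality of kernels is Corollary~\ref{co:uniqueness}. The converse direction is a direct computation.

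For the first direction, let $U$ and $\mu$ be as produced in Theorem~\ref{th:representation}, and fix $\lambda>0$. Homogeneity gives $\oV(\lambda f)=\lambda^q\oV(f)$ for all $f\in\cfn$, so
\[
\int_{\sn}\langle U(\lambda f(s),s),g\rangle\d\mu(s)=\int_{\sn}\lambda^q\langle U(f(s),s),g\rangle\d\mu(s)
\]
for every $f\in\cfn$ and $g\in C(\sn)$.

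Define $U_1(t,s)=U(\lambda t,s)$ and $U_2(t,s)=\lambda^qU(t,s)$. Both maps are $\mu$-strong Carath\'eodory, since $t\mapsto \lambda t$ is continuous. Both are uniformly controlled, with control functions $\cm(\lambda t)$ and $\lambda^q\cm(t)$ respectively. Both vanish at $t=0$ for $\mu$-a.e.\ $s$. The displayed identity says exactly that $U_1$ and $U_2$ induce the same valuation. Corollary~\ref{co:uniqueness}, applied at $t=1$, then yields $U(\lambda,s)=\lambda^qU(1,s)$ for $\mu$-a.e.\ $s$, with the exceptional null set depending on $\lambda$.

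Now set $U'(\lambda,s)=\lambda^qU(1,s)$ for $\lambda>0$ and $U'(0,s)=0$, and check that $U'$ can replace $U$ in the representation. For each fixed $\lambda$ we have $U'(\lambda,\cdot)=U(\lambda,\cdot)$ $\mu$-a.e., so by Corollary~\ref{co:uniqueness} the two kernels represent the same valuation $\oV$, provided $U'$ is itself $\mu$-strong Carath\'eodory and uniformly controlled. This verification is the main obstacle. For $\lambda>0$ both properties are clear: $s\mapsto\langle U(1,s),g\rangle$ is Borel, $\lambda\mapsto\lambda^q$ is continuous on $(0,\infty)$, and $\|U'(\lambda,s)\|\le \lambda^q\cm(1)$ on the set $S$.

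The delicate points are continuity at $\lambda=0$ and whether $\lambda^q\cm(1)$ remains finite as $\lambda\to0$ when $q\le0$. I would handle them as follows.
\begin{itemize}
\item Take the full-measure set where $\lambda\mapsto\langle U(\lambda,s),g\rangle$ is continuous for every $g\in\poly$ and $U(\lambda,s)=\lambda^qU(1,s)$ for all rational $\lambda>0$. Intersecting countably many full-measure sets keeps it of full measure.
\item On this set, continuity extends the identity $U(\lambda,s)=\lambda^qU(1,s)$ to all $\lambda>0$, first tested on $\poly$ and then on all of $C(\sn)$ by density, using the bound $\cm(\lambda)$.
\item Consequently, for $\mu$-a.e.\ $s$ the original $U$ already equals $\lambda^qU(1,s)$ for every $\lambda>0$, and $\lambda^qU(1,s)\to U(0,s)=0$ weak$^*$ as $\lambda\to0$. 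In particular, if $q\le0$ this forces $U(1,s)=0$ a.e.
\item So it suffices to modify $U$ on a $\mu$-null set, setting it to $0$ there. This preserves the representation, and the resulting kernel satisfies the identity for every $s$.
\end{itemize}

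For the converse, assume $U(\lambda,s)=\lambda^qU(1,s)$ for all $\lambda>0$ and all $s$. Then for $t>0$,
\[
\langle\oV(tf),g\rangle=\int_{\sn}\langle U(tf(s),s),g\rangle\d\mu(s)=t^q\int_{\sn}\langle U(f(s),s),g\rangle\d\mu(s)=t^q\langle\oV(f),g\rangle.
\]
Points where $f(s)=0$ contribute nothing on either side, because $U(0,s)=0$ for $\mu$-a.e.\ $s$. Hence $\oV$ is $q$-homogeneous.
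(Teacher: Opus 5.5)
\textbf{The gap: the case $q=0$ with $\oV(0)\neq 0$.} You begin by taking $U$ and $\mu$ from Theorem~\ref{th:representation}. That theorem requires $\oV(0)=0$, and the corollary does not assume it.

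For $q\neq 0$ the hypothesis holds automatically, since $\oV(0)=\oV(\lambda\cdot 0)=\lambda^q\oV(0)$ for all $\lambda>0$. You should state this explicitly.

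For $q=0$ it can fail. Any constant map $\oV\equiv\nu$ with $0\neq\nu\in\measn$ is a weak$^*$ continuous, $0$-homogeneous valuation, and for it your first step is unavailable. The paper treats this case separately. Weak$^*$ continuity gives $\oV(f)=\lim_{\lambda\to 0}\oV(\lambda f)=\oV(0)$, so $\oV$ is constant. It is then represented with respect to $\hn$ by the constant kernel $U(\lambda,s)=\oV(0)/\hn(\sn)$, at the cost of dropping the normalization $U(0,s)=0$. Your remark that $q\le 0$ forces $U(1,s)=0$ covers $q=0$ only under the extra hypothesis $\oV(0)=0$.

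\textbf{Comparison with the paper's route.} Once that case is added, your argument is correct and organized differently from the paper's.

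The paper proceeds in four steps:
\begin{itemize}
\item it proves $\overline{\oV}(\lambda\chi_A)=\lambda^q\overline{\oV}(\chi_A)$ for closed $A$ by approximation;
\item it extends this to all Borel $A$ by a $\sigma$-algebra argument;
\item it reads off $U(\lambda,\cdot)=\lambda^qU(1,\cdot)$ $\mu$-a.e.\ from the integrals over $A$;
\item it replaces $U$ by $\tilde U(\lambda,s)=\lambda^qU(1,s)$, asserting that this kernel is $\mu$-strong Carath\'eodory and uniformly controlled, and invokes Corollary~\ref{co:uniqueness}.
\end{itemize}

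You instead apply Corollary~\ref{co:uniqueness} directly to the rescaled pair $U(\lambda t,s)$ and $\lambda^qU(t,s)$. This absorbs the $\sigma$-algebra argument into a result already proved. Then, rather than introducing a new kernel, you pass to a single Borel full-measure set (using rational $\lambda$ and continuity) and set $U=0$ off it.

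This preserves the strong Carath\'eodory property and the control for free. You therefore never need to check how $\lambda^qU(1,s)$ behaves as $\lambda\to 0$. That check is not vacuous for $q<0$. Your observation that $U(1,s)=0$ $\mu$-a.e.\ in that case explains why nothing goes wrong, a point the paper leaves implicit.
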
 

\begin{proof}
First, if $q=0$, then weak$^*$ continuity implies that $\oV(f)=\oV(0)$ for every $f\in\cfn$. It is easy to see that the map $U(\lambda,s)=\frac{1}{\mathcal{H}^{n-1}(\sn)}\oV(0)$ represents the valuation with a similar integral representation as in Theorem \ref{th:representation}, even if $U(0,s)$ may not be 0 $\mu$-almost everywhere. It is also clear that this map satisfies $U(\lambda,s)=\lambda^q\,U(1,s)$

If $q\neq 0$, then $q$-homogeneity implies that $\oV(0)=\lambda^q\oV(0)$ for every $\lambda>0$, and therefore $\oV(0)=0$ if $q\neq 0$. Thus, we can apply Theorem \ref{th:representation}, and there exists a map $U$ as in the statement. Let us consider the extension $\overline{\oV}$ of $\oV$  as defined in Corollary \ref{co: extension} and let us check that $\overline{\oV}$ is $q$-homogeneous. To this end, let $C \subset \sn $ be closed and $\lambda>0$. For $\tau>0$, let $f_\tau\in C(\sn )$ be such that $f_\tau(s)=1$ for every $s\in C$, $\|f_\tau\|_{\scriptscriptstyle \infty}\leq 1$ and $f_\tau(s)= 0$ whenever $\dist(s, C)\geq \tau$. By Corollary \ref{co: extension}, we have  $$\langle \overline{\oV}(\lambda\chi_C),g\rangle= \lim_{\tau\rightarrow 0} \langle \oV(\lambda f_\tau),g\rangle = 
\lim_{\tau\rightarrow 0} \lambda^q \langle \oV( f_\tau),g\rangle = \langle \lambda^q\, \overline{\oV}(\chi_C),g\rangle$$
for every $g\in C(\sn )$ and $\lambda\in [0,\lambda_0]$. Hence, $\overline{\oV}(\lambda\chi_C)=\lambda^q\overline{\oV}(\chi_C)$.

Now, we claim that the class of subsets $A$ of $\sn $  for which $\overline{\oV}(\lambda \chi_A) = \lambda^q\overline{\oV}(\chi_A)$ for every $\lambda \ge 0$ is a $\sigma$-algebra. Indeed, using the valuation property of $\overline{\oV}$,  we see that this class is closed under finite union and taking complementary sets. 
It remains to show that it is closed under countable unions. Let $(A_i)_{i\in \mathbb N}$  be a disjoint sequence of subsets in this class. For $A=\bigcup_i A_i$, the
 sequence $\sum_{i=1}^m \chi_{A_i}$ converges almost everywhere to $\chi_A$. By Corollary \ref{co: extension}, we have 
\begin{align}
\langle \overline{\oV}(\lambda\chi_A),g\rangle &= \lim_{m\to\infty} \langle \overline{\oV}(\lambda \sum\nolimits_{i=1}^m \chi_{A_i}),g\rangle\\
&= \lim_{m\to\infty} \lambda^q \langle\overline{\oV}( \sum\nolimits_{i=1}^m \chi_{A_i}) ,g\rangle 
= \langle \lambda^q \overline{\oV}(\chi_A),g\rangle
\end{align}
for every $g\in C(\sn )$.
\goodbreak

We consider the map  $U\colon \rp \times \sn \to \measn$ associated to $\oV$. 
For $s\in \sn $ and $\lambda\ge 0$, we define the map 
$U'\colon\rp\times \sn \to \R$ by 
$$
\tilde U(\lambda, s)=\lambda^q U(1,s).
$$

It is clear that $\tilde U$ is a $\mu$-strong Carath\'eodory and uniformly controlled map. Now, for every $g\in C(\sn)$ and every $A\in \Sigma_n$ one gets that
\[
    \int_A \!\langle U(\lambda,s),g\rangle\d\mu(s)=\langle \overline{\oV}(\lambda\chi_A),g\rangle=\langle \lambda^q \overline{\oV}(\chi_A),g\rangle=\lambda^q\int_A\! \langle U(1,s),g\rangle\d\mu(s).
\]
This implies that for every $\lambda\in\mathbb{R}_+$, there exists a set $S'$ of $\mu$-full measure where $U(\lambda,s)=\tilde U(\lambda,s)$ for every $s\in S'$. By Corollary \ref{co:uniqueness}, $U$ and $\tilde U$ define the same valuation. The converse is clear. 
\end{proof}

\goodbreak

\subsection{Locally determined valuations and dual area measures}\label{ss:ld}

We call a valuation $\oV\colon \cfn\to \measn$  {\em locally determined} if  
\begin{equation}\label{eq:locallydet}
f_1\wedge \chi_B = f_2\wedge \chi_B \,\text{ implies }\, \oV(f_1)(B)=\oV(f_2)(B)    
\end{equation} 
for every open set $B\subset \sn$ and  $f_1, f_2 \in \cfn$. In fact, an argument similar to \cite{Jonas2026} shows that a locally determined map that is weak$^*$ continuous is actually a valuation in this context.

Note in particular that locally determined maps satisfy the following 
\begin{equation}\label{eq_ld2}
    f\wedge \chi_B = 0 \,\text{ implies }\, \oV(f)(B)=V(0)(B)=0
\end{equation}
for every open set $B\subset \sn$ and  $f \in C(\sn )$. We will use \eqref{eq_ld2} in the proof of the following lemma. 

\begin{lemma}\label{l:locally determined}
The map $\,\oV\colon \cfn\to \measn$ is a weak$^*$~continuous, locally determined, rotation equivariant valuation with $\oV(0)=0$ if and only if there exists a continuous function $\theta\colon\rp\to \rp$ with $\theta(0)=0$ such that for every $\lambda \in \R_+$ and  $s\in \sn $
\[
U'(\lambda, s) = \theta(\lambda)\,\delta_s.
\]
\end{lemma}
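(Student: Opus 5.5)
We prove the two implications separately. For the forward direction, fix $\lambda>0$ and $\pole\in\sn$. The goal is to show that $U'(\lambda,\pole)$ is concentrated at $\pole$. By \eqref{eq:sigma} it already has the form $\sigma_\lambda\otimes\overline{\mathcal H}^{n-2}+a_\lambda\delta_\pole+b_\lambda\delta_{-\pole}$, so it suffices to show $\sigma_\lambda=0$ and $b_\lambda=0$. Then $\theta(\lambda)=a_\lambda$.

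For this we use the definition \eqref{eq:defin_U'}:
$$\langle U'(\lambda,\pole),g\rangle=\lim_{\rho\to0}\nu_{\lambda,g}(B(\pole,\rho))/\mu(B(\pole,\rho)),$$
where $\nu_{\lambda,g}(A)=\langle\overline\oV(\lambda\chi_A),g\rangle$. Take $g\ge0$ in $C(\sn)$ supported in an open set $G$ with $\dist(G,\pole)>2\rho_0$, and let $\rho<\rho_0$.
\begin{itemize}
\item For closed $C\subset B(\pole,\rho)$, use \eqref{limitrims}: $\nu_{\lambda,g}(C)=\lim\langle\oV(f_i),g\rangle$ with $f_i$ supported in the $1/i$-neighbourhood of $C$.
\item For large $i$, $f_i\wedge\chi_G=0$, so \eqref{eq_ld2} gives $\oV(f_i)(G)=0$.
\item Since $\oV(f_i)\ge0$ and $g$ is supported in $G$, this yields $\langle\oV(f_i),g\rangle=0$.
\end{itemize}
Hence $\nu_{\lambda,g}(C)=0$, and by regularity $\nu_{\lambda,g}(B(\pole,\rho))=0$ for all small $\rho$. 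So $\langle U'(\lambda,\pole),g\rangle=0$ for every such $g$. Letting $\rho_0\to0$ shows that $U'(\lambda,\pole)$ vanishes on $\sn\setminus\{\pole\}$, which gives $U'(\lambda,\pole)=a_\lambda\delta_\pole$.

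Lemma~\ref{l:haar n-2} and rotation equivariance (Lemma~\ref{l: equivariant measures}) show that $a_\lambda$ does not depend on $\pole$. We then set $\theta(\lambda)=a_\lambda=\langle U'(\lambda,\pole),\uno\rangle$. This is continuous in $\lambda$ by the continuity lemma for $U'$, and it is non-negative. Finally $\theta(0)=0$, because $\nu_{0,g}=0$ when $\oV(0)=0$.

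For the converse, Theorem~\ref{th:representation equivariant} applies with $\rhot_\lambda=\theta(\lambda)\delta_1$. This $\rhot$ is weak$^*$ continuous with $\rhot_0=0$. It gives a weak$^*$ continuous, rotation equivariant valuation with
$$\oV(f)=c\,f'\text{-type measure},\qquad \oV(f)(B)=\int_B\theta(f(s))\d\hn(s),$$
where $\mu$ is a multiple of $\hn$ by Lemma~\ref{l:invarian_mu}. To see that $\oV$ is locally determined, suppose $f_1\wedge\chi_B=f_2\wedge\chi_B$ on an open set $B$. Then on $B$ the functions $f_1$ and $f_2$ agree wherever they are below $1$. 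The trouble is the set where both are at least $1$ but differ.

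The main obstacle is precisely this step: the condition \eqref{eq:locallydet} as written only controls the values of $f$ below the level $1$. I expect the intended reading is that $\chi_B$ acts as a cutoff meaning ``$f_1=f_2$ on $B$''. I would first check how \eqref{eq:locallydet} is used elsewhere in the paper. If it indeed means $f_1|_B=f_2|_B$, the converse is immediate, since $\oV(f)(B)$ depends only on $f|_B$. Note that the forward direction only needs \eqref{eq_ld2}, which holds under either reading. The delicate analytic point there is the passage from closed $C$ to the balls $B(\pole,\rho)$ via regularity, together with the positivity of $\oV(f_i)$.
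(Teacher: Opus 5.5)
Your argument is correct and is essentially the paper's. In the forward direction you kill $\langle U'(\lambda,s),g\rangle$ for $g$ supported away from $s$ using \eqref{eq_ld2} and the ball-ratio definition \eqref{eq:defin_U'}, then use Lemma~\ref{l: equivariant measures} to make the point mass independent of $s$; the paper instead gets continuity of $\theta$ from $\langle \oV(\lambda\uno),\uno\rangle=\mu(\sn)\theta(\lambda)$ rather than from the continuity lemma for $U'$, which is an inessential difference, and it simply calls the converse immediate.

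Your worry about \eqref{eq:locallydet} is justified, and you can settle it outright. With the pointwise minimum, $f_1\equiv 2$ and $f_2\equiv 3$ give $f_1\wedge\chi_B=f_2\wedge\chi_B=\chi_B$, so $\theta(\lambda)=\lambda$ (or $\tilde S_q$ in Theorem~\ref{t:char}) would violate locality. The definition must therefore be read as $f_1=f_2$ on $B$ (e.g.\ $f_1\chi_B=f_2\chi_B$), which is exactly what the paper's ``immediate'' tacitly uses. This reading does not affect the forward direction, since \eqref{eq_ld2} holds under both readings.
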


\begin{proof}
Let $s\in \sn $ and $\lambda\ge 0$. Suppose $g\in C(\sn )$  is a function such that $s\not \in \supp (g)$, the support of $g$. Since $\supp (g)$ is a closed set, 
$$
B(s, \rho)\cap \supp (g) = \emptyset
$$ 
for every $0<\rho<\dist(s, \supp (g))$. 
In particular, $\langle \oV(\lambda f),g\rangle=0$ for every $f\prec B(s,\rho)$. 
Therefore, we can consider the map $U'$ constructed in Section \ref{ss_rev}, and we have
$$
U'(\lambda, s)(g) = \lim_{\rho\rightarrow 0} \frac{\langle \oV(\lambda B(s,\rho)), g\rangle}{\mu(B(s,\rho))} = 0.
$$
Since this holds for every  $g\in C(\sn )$ with $s\not \in \supp (g)$, it follows that $U'(\lambda, s) = u(\lambda,s) \,\delta_s$ for some $u(\lambda,s)\in \rp$. 

Let now $s'\not = s$ be another point in $\sn $. Choose $\phi\in \on$ such that $\phi(s)=s'$. 
By Lemma \ref{l: equivariant measures}, we have $$u(\lambda, s) = U'(\lambda, s)(\uno) = U'(\lambda, s')(\uno)= u(\lambda, s').$$ Therefore, $u(\lambda, s)$ does not depend on $s$, and we can rename it as $\theta(\lambda)$. 

To see that $\theta\colon \rp\to\rp$ is continuous, just note  that
$$\langle \oV(\lambda \uno),\uno\rangle= \int_{\sn } \langle U'(\lambda, s),\uno\rangle \d\mu(s) = \int_{\sn } \theta(\lambda) \d\mu(s) =\mu(\sn) \theta(\lambda). $$ 
Hence, the continuity of $\theta$ follows from the weak$^*$ continuity of $\oV$ and from $\theta(0)=0$.

The converse statement is immediate. 
\end{proof}

The following classification result is an immediate consequence of Theorem \ref{th:representation equivariant} and Lemma~\ref{l:locally determined}. 

\begin{theorem}\label{t:orlicz} 
A map $\oV\colon  \cfn\to \measn$ is a weak$^*$\! continuous, locally determined, rotation equi\-variant valuation with $\oV(0)=0$ if and only if there is a continuous function $\theta\colon \rp\to\rp$ with $\theta(0)=0$ such that 
$$\langle \oV(f),g\rangle =\int_\sn \theta(f(s)) g(s) \d\hn(s)$$
for every $f\in\cfn$ and $g\in C(\sn)$.
\end{theorem}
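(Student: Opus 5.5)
The plan is to combine the integral representation for rotation equivariant valuations with the pointwise description of $U'$ given by Lemma~\ref{l:locally determined}. The key fact is that, in the rotation equivariant case, we may work with the map $U'$ constructed in Subsection~\ref{ss_rev}. We also use that the control measure $\mu$ is a constant multiple of $\hn$ by Lemma~\ref{l:invarian_mu}, say $\mu=c_0\,\hn$ with $c_0\ge 0$.

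For the forward direction, let $\oV$ be weak$^*$ continuous, locally determined and rotation equivariant with $\oV(0)=0$.

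1. By Theorem~\ref{th:representation}, $U$ represents $\oV$ with respect to $\mu$. By \eqref{eq:U=U'ae} and Corollary~\ref{co:uniqueness}, the same holds for $U'$:
$$\langle \oV(f),g\rangle=\int_\sn \langle U'(f(s),s),g\rangle \d\mu(s).$$
2. Lemma~\ref{l:locally determined} gives a continuous $\theta_0\colon\rp\to\rp$ with $\theta_0(0)=0$ and $U'(\lambda,s)=\theta_0(\lambda)\delta_s$.
3. Substituting, we get
$$\langle \oV(f),g\rangle=\int_\sn \theta_0(f(s))\,g(s)\d\mu(s)=\int_\sn c_0\,\theta_0(f(s))\,g(s)\d\hn(s).$$
4. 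Set $\theta=c_0\theta_0$. It is continuous, non-negative and vanishes at $0$.

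If $\mu=0$, then $\oV\equiv 0$ and $\theta=0$ works, so this degenerate case is consistent.

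For the converse, start from a continuous $\theta\ge 0$ with $\theta(0)=0$.

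1. Define $U(\lambda,s)=\theta(\lambda)\delta_s$ with $\mu=\hn$.
2. This $U$ is $\hn$-strong Carath\'eodory. The map $s\mapsto\theta(\lambda)g(s)$ is continuous, hence Borel, and $\lambda\mapsto\theta(\lambda)g(s)$ is continuous for every $s$.
3. It is uniformly controlled by $\cm(\lambda)=\theta(\lambda)$, and $U(0,s)=0$.
4. Theorem~\ref{th:representation} then shows that the formula defines a weak$^*$ continuous valuation with $\oV(0)=0$. Alternatively, this can be checked directly by dominated convergence.
5. Rotation equivariance follows from the invariance of $\hn$ under $\on$:
$$\int_\sn \theta(f(\phi^{-1}s))\,g(\phi^{-1}s)\d\hn(s)=\int_\sn\theta(f(s))\,g(s)\d\hn(s).$$

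The step I expect to need the most care is local determination. The measure is $\oV(f)=\theta(f)\,\hn$, so $\oV(f)(B)=\int_B\theta(f)\d\hn$. The hypothesis $f_1\wedge\chi_B=f_2\wedge\chi_B$ on an open set $B$ yields only
$$\min(f_1,1)=\min(f_2,1)\quad\text{on } B,$$
not $f_1=f_2$ on $B$. Read literally, this would force $\theta$ to be constant on $[1,\infty)$, which is inconsistent with the forward direction, since Lemma~\ref{l:locally determined} allows general $\theta$. So I will interpret the condition as in the paper's use of it, namely that $f_1$ and $f_2$ agree on $B$. That is the reading under which Lemma~\ref{l:locally determined} was proved (via \eqref{eq_ld2}, using functions supported in small balls). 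Under this reading, $\theta(f_1)=\theta(f_2)$ on $B$, and local determination is immediate. The rest of the argument is routine bookkeeping of the constant $c_0$.
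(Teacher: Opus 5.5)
Your proof is correct and is essentially the paper's argument: the forward direction combines the $U'$-representation (with $\mu=c_0\hn$ from Lemma~\ref{l:invarian_mu}) with Lemma~\ref{l:locally determined}, and the converse is the routine verification the paper calls immediate. Your worry about the definition is justified. Read literally, the ``if'' direction is false: take $\theta(\lambda)=\lambda$, $f_1\equiv 2$, $f_2\equiv 3$, $B=\sn$. So the hypothesis must be read as $f_1=f_2$ on $B$. Note, however, that the conflict is with the converse (and the converse half of Lemma~\ref{l:locally determined}), not with the forward direction: the literal condition is the stronger one, and \eqref{eq_ld2} reads the same under both interpretations.
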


\noindent
Requiring that the valuation is, in addition, $q$-homogeneous with $q\in\R$, by Corollary \ref{c:hom} and the above we obtain the following classification.

\begin{theorem}\label{t:characterization}
A map $\oV\colon  \cfn\to \measn$ is a weak$^*$\! continuous, locally determined, rotation equi\-variant, and $q$-homogeneous valuation if and only if there is $c\ge 0$ such that
$$\langle \oV(f),g\rangle =c\,\int_\sn f^q(s) g(s) \d\hn(s)$$
for every $f\in\cfn$ and $g\in C(\sn)$.
\end{theorem}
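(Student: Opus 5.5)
The plan is to combine Theorem~\ref{t:orlicz} with Corollary~\ref{c:hom}, treating the case $q=0$ separately.

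\emph{The case $q\neq 0$.} Homogeneity gives $\oV(0)=\lambda^q\oV(0)$ for all $\lambda>0$, hence $\oV(0)=0$. Theorem~\ref{t:orlicz} then provides a continuous $\theta\colon\rp\to\rp$ with $\theta(0)=0$ and
$$\langle\oV(f),g\rangle=\int_\sn\theta(f(s))\,g(s)\d\hn(s).$$
To identify $\theta$, test on constant functions, which avoids Corollary~\ref{c:hom} and the uniqueness of $U$. Taking $f=\lambda\uno$ and $g=\uno$ gives
$$\hn(\sn)\,\theta(\lambda)=\langle\oV(\lambda\uno),\uno\rangle=\lambda^q\langle\oV(\uno),\uno\rangle=\lambda^q\,\hn(\sn)\,\theta(1).$$
Hence $\theta(\lambda)=c\lambda^q$ for $\lambda>0$, with $c=\theta(1)\ge0$. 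Continuity at $0$ together with $\theta(0)=0$ forces $c=0$ when $q<0$, since $\lambda^q\to\infty$ as $\lambda\to0^+$. When $q>0$, $f^q$ is continuous and the formula holds with $0^q=0$. The integrand $\theta(f(s))=c\,f(s)^q$ then yields the claim.

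\emph{The case $q=0$.} As in the proof of Corollary~\ref{c:hom}, weak$^*$ continuity and $0$-homogeneity give $\oV(f)=\oV(\lambda f)\to\oV(0)$ as $\lambda\to0$. So $\oV$ is constant, equal to some $\nu=\oV(0)\in\measn$.
\begin{itemize}
\item Rotation equivariance makes $\nu$ rotation invariant, so $\nu=c\,\hn$ with $c\ge0$.
\item Local determination is automatic for a constant map.
\item With the convention $f^0\equiv1$, this is the claimed formula.
\end{itemize}

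\emph{Converse.} The map $f\mapsto c\,f^q\hn$ satisfies all hypotheses:
\begin{itemize}
\item it is a valuation, since $(f\vee g)^q+(f\wedge g)^q=f^q+g^q$ pointwise;
\item it is weak$^*$ continuous by dominated convergence, since $f^q$ is continuous in $f$ uniformly on bounded sets (for $q\le0$ we need $c=0$ or the convention above);
\item it is rotation equivariant since $\hn$ is invariant;
\item it is locally determined since the density at $s$ depends only on $f(s)$. Indeed, if $f_1\wedge\chi_B=f_2\wedge\chi_B$ on an open set $B$ and the values matter where they are below $1$, the one delicate point is comparing $f_1^q$ and $f_2^q$ on $B$. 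This is handled exactly as in the converse of Lemma~\ref{l:locally determined}, which I would invoke.
\end{itemize}
Homogeneity is immediate.

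The main obstacle is the bookkeeping at $q\le0$: excluding nonzero $c$ for $q<0$ via continuity of $\theta$ at $0$, and treating $q=0$ outside Theorem~\ref{t:orlicz} because $\oV(0)\neq0$ there.
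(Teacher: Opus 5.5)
\textbf{Forward direction.} This part is correct and follows the paper's route (Theorem~\ref{t:orlicz} plus homogeneity). Reading off $\theta(\lambda)=\lambda^q\theta(1)$ from $f=\lambda\uno$, $g=\uno$ is a cleaner substitute for Corollary~\ref{c:hom}, which would otherwise have to be matched with the specific kernel $U'$ through Corollary~\ref{co:uniqueness}. Your separate treatment of $q=0$ (where $\oV(0)$ need not vanish, so Theorem~\ref{t:orlicz} does not apply) and of $q<0$ (where continuity of $\theta$ at $0$ forces $c=0$) makes explicit what the paper's one-line proof leaves implicit.

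\textbf{The gap: local determination in the converse.} You notice that $f_1\wedge\chi_B=f_2\wedge\chi_B$ only says $\min\{f_1,1\}=\min\{f_2,1\}$ on $B$. You then defer to the converse of Lemma~\ref{l:locally determined}, which gives no argument for exactly this point; the paper calls it immediate, and its own proof of the present theorem leans on it.

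With \eqref{eq:locallydet} read literally, the step is false. Take $B=\sn$, $f_1\equiv 2$, $f_2\equiv 3$. Then $f_1\wedge\chi_B=f_2\wedge\chi_B=\uno$, yet $\oV(f_1)(B)=c\,2^q\hn(\sn)\neq c\,3^q\hn(\sn)=\oV(f_2)(B)$ whenever $c>0$ and $q\neq0$. The same example breaks the converse of Lemma~\ref{l:locally determined} whenever $\theta$ is not constant on $[1,\infty)$, so appealing to that lemma cannot help.

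The converse can only be proved if ``locally determined'' means that $f_1=f_2$ on $B$ implies $\oV(f_1)(B)=\oV(f_2)(B)$, i.e.\ truncation of the star body by the cone over $B$, not by height $1$. Under that reading the check is one line: $\oV(f_i)(B)=c\int_B f_i^q\d\hn$ and $f_1^q=f_2^q$ on $B$. Your forward direction survives unchanged, because it only uses \eqref{eq_ld2}, which holds under either reading. State that reading explicitly and give this verification rather than appealing to the lemma.
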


Finally, we return to valuations on star bodies in $\R^n$. We call a valuation $\oZ\colon \starn\to \measn$ \emph{locally determined} if the valuation $f\mapsto \oZ(L_{f})$, as defined in \eqref{eq_star}, is locally determined on $\cfn$. 

For $q\in\R$, Huang, Lutwak, Yang, and Zhang \cite{HLYZ}  introduced the $q$-th dual area measures of a convex body $K$ with the origin in the interior as 
\begin{equation}\label{eq:dam}
    \tilde S_q(K)(B)= \frac1n \int_B \rho_K^q(s) \d\hn(s) 
\end{equation}
for Borel sets $B\subset \sn$. Note that the map $K \mapsto \tilde S_q(K)$ is a valuation defined on convex bodies in $\rn$ with the origin in their interiors and taking values in $\measn$. Also, note that this valuation can be extended to all star bodies using the same definition as in \eqref{eq:dam}. 

Theorem \ref{t:char} is now an immediate consequence of Theorem \ref{t:characterization}. As a consequence of Theorem \ref{t:orlicz}, we also get the following.

\begin{theorem} 
A map $\oZ\colon \starn\to \measn$ is a weak$^{\,*}\!$ continuous, locally determined, rotation equi\-variant valuation with $\oZ(\{0\})=0$ if and only if there is 
a continuous function $\theta\colon\rp\to\rp$ with $\theta(0)=0$ such that
$$\langle \oZ(L),g\rangle =\int_{\sn} \theta(\rho_L(s))g(s)\d\hn( s)$$
for every $L\in\starn$ and $g\in C(\sn)$.
\end{theorem}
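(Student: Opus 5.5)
The statement is the star-body version of Theorem~\ref{t:orlicz}. My plan is to transfer it through the identification \eqref{eq_star}, $\oV(f)=\oZ(L_f)$. Everything then reduces to checking that each hypothesis and the conclusion translate faithfully between $\starn$ and $\cfn$.

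\emph{Forward direction.} Let $\oZ\colon\starn\to\measn$ be a weak$^*$ continuous, locally determined, rotation equivariant valuation with $\oZ(\{0\})=0$. Define $\oV(f)=\oZ(L_f)$ on $\cfn$ and verify the hypotheses of Theorem~\ref{t:orlicz} one by one.
\begin{itemize}
\item \emph{Valuation property.} Since $\rho_{K\cup L}=\rho_K\vee\rho_L$ and $\rho_{K\cap L}=\rho_K\wedge\rho_L$, \eqref{eq:val} becomes \eqref{eq:valuationfunctions}.
\item \emph{Weak$^*$ continuity.} The radial metric equals $\|\rho_K-\rho_L\|_{\scriptscriptstyle\infty}$, so $f\mapsto L_f$ is an isometry onto $\starn$. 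Continuity of $\oZ$ therefore transfers directly to $\oV$.
\item \emph{Normalization.} $L_0=\{0\}$, so $\oV(0)=0$.
\item \emph{Local determination.} This holds by the very definition of a locally determined $\oZ$.
\item \emph{Rotation equivariance.} For $\phi\in\on$ we have $\rho_{\phi L}(s)=\rho_L(\phi^{-1}s)$, that is, $\rho_{\phi L}=\phi\rho_L$. Hence $L_{\phi f}=\phi L_f$, and $\langle\oZ(\phi L),\phi g\rangle=\langle\oZ(L),g\rangle$ becomes $\langle\oV(\phi f),\phi g\rangle=\langle\oV(f),g\rangle$.
\end{itemize}
Theorem~\ref{t:orlicz} then gives a continuous $\theta$ with $\theta(0)=0$ and $\langle\oV(f),g\rangle=\int_\sn\theta(f(s))g(s)\d\hn(s)$. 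Substituting $f=\rho_L$ yields the claimed formula for $\oZ$.

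\emph{Converse direction.} Given such a $\theta$, the formula defines $\oV$ on $\cfn$. The converse part of Theorem~\ref{t:orlicz} shows that $\oV$ is a weak$^*$ continuous, locally determined, rotation equivariant valuation with $\oV(0)=0$. Reading the same translations backward, $\oZ(L)=\oV(\rho_L)$ inherits all of these properties, and $\oZ(\{0\})=\oV(0)=0$.

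\emph{Where care is needed.} I expect no real obstacle here. The only point to check carefully is the rotation convention: one must confirm that $\phi f=f\circ\phi^{-1}$ matches $\rho_{\phi L}$. This holds because $\lambda s\in\phi L$ if and only if $\lambda\phi^{-1}s\in L$.
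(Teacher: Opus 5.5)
Your proposal is correct and takes the same approach as the paper, which obtains this statement directly from Theorem~\ref{t:orlicz} through the identification $\oV(f)=\oZ(L_f)$ of \eqref{eq_star}. Your explicit checks spell out what the paper leaves implicit: the valuation property, the isometry $f\mapsto L_f$, $L_0=\{0\}$, the definition of local determination, and $\rho_{\phi L}=\phi\rho_L$.
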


\noindent
These measures play a relevant role in the dual Orlicz--Brunn--Minkowski theory (cf.\ \cite{ZhuZhouXu, GardnerHugWeilYe}).

\subsection{A final remark on weak$^*$ continuity versus norm continuity} 
Our purpose here is to exhibit a simple example of a weak$^*$ continuous, rotation equivariant valuation that is not norm continuous, i.e. it is not continuous when $M(\sn)$ is endowed with the norm topology. Take $\rhot_\lambda=\min\{\lambda,1\}\,\delta_{\cos\lambda}$ with  $\delta_\lambda$ the Dirac measure on $\R$. Clearly, $\rhot_0=0$. Let us check that $\lambda\mapsto\rhot_\lambda$ is weak$^*$ continuous. For every $\G\in C([-1,1])$, we have
\[
    \langle \rhot_\lambda,\G\rangle=\min\{\lambda,1\} \G(\cos\lambda)
\]
which is continuous in $\lambda$, since both $\G$ and $\lambda\mapsto\min\{\lambda,1\}$ are continuous. Therefore, by Theorem \ref{th:representation equivariant}, the measure-valued function $\rhot$ defines a weak$^*$ continuous, rotation equivariant valuation $\oV$. Let us show that it is not norm continuous when $n=2$. We will identify $\mathbb{R}^2$ with the complex plane and the unit sphere $S^1$ with the interval $[0,2\pi]$, that is, any angle $\beta\in[0,2\pi]$ is identified with $e^{i\beta}\in S^1$. In this case note that for $g\in C(S^1)$,
\[ \radon g(e^{is}, \cos\lambda)
        =\frac{1}{2}\left(g(e^{i(s+\lambda) })  +g(e^{i(s-\lambda)})\right).
\]
Now consider the function
\[
    f(e^{is})=\left\{\begin{array}{ll}
        -s+2\pi & \text{if }s\in[0,\pi], \\
         s & \text{if }s\in[\pi,2\pi],
    \end{array}  \right.
\]
and for any $m\in\mathbb{N}$ the functions $f_m(e^{is})=f(e^{is})+\frac{1}{m}$ which are continuous. Clearly $f_m$ converges uniformly to $f$ and we have $f,f_m\geq 1$. Thus,
\[
    \langle U'(f(e^{is}),e^{is}),g\rangle
   =\frac{1}{2}\left(g(e^{0})+g(e^{i(2s)})\right)
\]
and 
\begin{align*}
    \langle U'(f_m(e^{is}),e^{is}),g\rangle 
  = \left\{\begin{array}{ll}
       \frac{1}{2}\big(g(e^{i\frac{1}{m}})+g(e^{i\left(2s-\frac{1}{m}\right)}) \big)  &  \text{if }s\in[0,\pi],\\[4pt]
       \frac{1}{2}\big(g(e^{i(2s+\frac{1}{m})})+g(e^{-i\left(\frac{1}{m}\right)}) \big)  & \text{if }s\in[\pi,2\pi],
    \end{array}  \right.
\end{align*}
where we used the fact that $g(e^{i(\beta+2\pi)})=g(e^{i\beta})$ for any $\beta\in\mathbb{R}$. Then, for any $g\in C(S^1)$,
\begin{align*}
    \langle \oV(f),g\rangle &=\frac{1}{2}\int_0^{2\pi}\left(g(e^{0})+g(e^{i2s})\right)\d s
    =\pi g(e^0)+\frac{1}{2}\int_0^{2\pi} g(e^{is})\d s,    
\end{align*}
and
\begin{multline*}
    \langle \oV(f_m),g\rangle =\frac{1}{2}\int_0^{\pi}\left(g(e^{i\frac{1}{m}})+g(e^{i\left(2s-\frac{1}{m}\right)})\right)\d s\\
    +\frac{1}{2}\int_\pi^{2\pi}\left(g(e^{-i\frac{1}{m}})+g(e^{i\left(2s+\frac{1}{m}\right)})\right)\d s \\
     =\frac{\pi}{2} \left(g(e^{i\frac{1}{m}})+g(e^{-i\frac{1}{m}})\right)+\frac{1}{2}\int_0^{2\pi} g(e^{is})\d s.
\end{multline*}
Therefore, for any $m$ one can take $g_m(s)$ such that $g_m(e^0)=1$ and $g_m(e^{\frac{i}{m}})=g(e^{-\frac{i}{m}})=0$ and, we have
\[
    \langle \oV(f)-\oV(f_m),g_m\rangle =\pi.
\]
Thus, $\oV$ is not norm continuous.

\goodbreak
\appendix
\section{Proof of Lemma \ref{l:weak pseudokernel}}\label{A:Inspection}

For the sake of completeness, we include here the details of the proof of Lemma \ref{l:weak pseudokernel}. The approach is the same as in \cite{TrViIntRep}, keeping track of the more general condition of control measure used in this paper. In fact, the same proof will show that one can obtain a similar representation for every measure which satisfies the control condition, given by Definition \ref{d:Mcontrol}, and whose control function is non-decreasing. 

Let $\mu$ and $\zeta$ be the control measure and control function of $\oV$ as defined in Section \ref{S:control}. For every $g\in C(\sn)_{\scriptscriptstyle +}$, every $\lambda\in\mathbb{R}_{\scriptscriptstyle +}$ and every $A\in\Sigma_n$ we have 
\[
    |\nu_{\lambda,g}(A)|\leq \|g\|_{\scriptscriptstyle \infty} \zeta (\lambda)\mu (A).
\]
Also recall that $\mu$ is non-negative, finite, Borel and regular and that $\zeta$ is bounded on bounded sets and non-decreasing. 

For $g\in C(\sn)$, write $\oV_g=\langle \oV,g\rangle$, and denote by $\overline \oV_g$ its continuous extension to $B(\sn)_{\scriptscriptstyle +}$ as in \cite{TrViIntRep}.

\begin{proposition}\label{p:appendix}
Let $g\in\cfn$. There is a continuous function $\Phi_g:B(\sn)_{\scriptscriptstyle +}\to L_1(\mu)$ such that 
\[
    \overline\oV_g(f)=\int_{\sn}\Phi_g(f)\d \mu.
\]
Moreover, $\Phi_g(f\chi_A)=\Phi_g(f)\chi_A$ for every $A\in\Sigma_n$.
\end{proposition}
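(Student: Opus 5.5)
We will build $\Phi_g$ first on simple functions, then extend it by continuity, following the scheme of \cite{TrViIntRep}.

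\emph{Simple functions.} For each $\lambda\ge0$ the measure $\nu_{\lambda,g}$ satisfies $|\nu_{\lambda,g}(A)|\le \|g\|_{\scriptscriptstyle\infty}\zeta(\lambda)\mu(A)$, so $\nu_{\lambda,g}\ll\mu$. By Radon--Nikodym we obtain $k_\lambda\in L_1(\mu)$ with
\[
\nu_{\lambda,g}(A)=\int_A k_\lambda\,\d\mu
\qquad\text{and}\qquad
|k_\lambda|\le \|g\|_{\scriptscriptstyle\infty}\zeta(\lambda)\ \ \mu\text{-a.e.}
\]
For a simple function $f=\sum_{i=1}^m\lambda_i\chi_{A_i}$ with the $A_i$ pairwise disjoint, we set
\[
\Phi_g(f)=\sum_{i=1}^m k_{\lambda_i}\chi_{A_i}.
\]
This is well defined: if a set carries value $\lambda$, refining the partition changes nothing because $k_\lambda\chi_A=k_\lambda\chi_{A'}+k_\lambda\chi_{A\setminus A'}$. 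By the definition of $\overline{\oV}_g$ on simple functions, $\int\Phi_g(f)\d\mu=\sum_i\nu_{\lambda_i,g}(A_i)=\overline{\oV}_g(f)$. The locality identity $\Phi_g(f\chi_A)=\Phi_g(f)\chi_A$ is immediate on simple functions, using $k_0=0$ since $\nu_{0,g}=0$ because $\oV(0)=0$.

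\emph{Uniform continuity.} This is the main obstacle: we need $\Phi_g$ to be uniformly continuous into $L_1(\mu)$ on norm-bounded sets of simple functions. For simple $f,h$ with $\|f\|_{\scriptscriptstyle\infty},\|h\|_{\scriptscriptstyle\infty}\le R$ and $\|f-h\|_{\scriptscriptstyle\infty}\le\delta$, take a common partition. Then
\[
\|\Phi_g(f)-\Phi_g(h)\|_{L_1}=\sum_i\int_{A_i}|k_{\lambda_i}-k_{\lambda_i'}|\d\mu \qquad\text{with } |\lambda_i-\lambda_i'|\le\delta .
\]
Write $\int_{A}|k_\lambda-k_{\lambda'}|\d\mu=|\nu_{\lambda,g}-\nu_{\lambda',g}|(A)$. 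So it suffices to show
\[
\sup_{A}\ \sup_{|\lambda-\lambda'|\le\delta,\ \lambda,\lambda'\le R}\frac{|\nu_{\lambda,g}-\nu_{\lambda',g}|(A)}{\ \cdot\ }\longrightarrow 0
\]
in the summed form over partitions. Concretely, we need a modulus $\omega_R(\delta)\to0$ such that $\sum_i|\nu_{\lambda_i,g}-\nu_{\lambda_i',g}|(A_i)\le\omega_R(\delta)$.

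To get it, we invoke the uniform continuity of the extended scalar valuation $\overline{\oV}_g$ on bounded sets, from \cite{TrViJMAA}. Take the Hahn decomposition of each signed measure $\nu_{\lambda_i,g}-\nu_{\lambda'_i,g}$ restricted to $A_i$, splitting $A_i=A_i^+\cup A_i^-$. Form simple functions $F,H$ that agree with $f$ on $\bigcup A_i^+$ and vanish elsewhere (respectively with $h$). By the valuation property the total-variation sum equals
\[
\bigl[\overline{\oV}_g(F)-\overline{\oV}_g(H)\bigr]-\bigl[\overline{\oV}_g(F')-\overline{\oV}_g(H')\bigr],
\]
with $F',H'$ defined on the negative parts. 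Since $\|F-H\|_{\scriptscriptstyle\infty}\le\delta$, uniform continuity of $\overline{\oV}_g$ on $R$-bounded sets yields $\omega_R(\delta)$. If uniform continuity is only available pointwise, we instead argue by contradiction with a sequence, using compactness-type arguments as in \cite{TrViIntRep}, possibly combined with the domination by $\zeta(R)\mu$ and the monotonicity of $\zeta$.

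\emph{Extension.} Simple functions are dense in $B(\sn)_{\scriptscriptstyle +}$, so $\Phi_g$ extends uniquely to a continuous map $B(\sn)_{\scriptscriptstyle+}\to L_1(\mu)$. Both the integral identity and the locality identity pass to the limit: $\int\cdot\,\d\mu$ is continuous on $L_1$, $\overline{\oV}_g$ is continuous, and multiplication by $\chi_A$ is continuous on both sides (with $f\chi_A$ approximable by $f_k\chi_A$).
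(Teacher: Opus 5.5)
Your proof is correct, but it is organized differently from the paper's.

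\textbf{The paper's route.} The paper never defines $\Phi_g$ on simple functions first. For each fixed $f\in B(\sn)_{\scriptscriptstyle +}$ it considers the set function $A\mapsto\overline{\oV}_g(f\chi_A)$. This is finitely additive by the valuation property. Approximating $f\chi_A$ from below by simple functions and using the control property gives $|\overline{\oV}_g(f\chi_A)|\le\|g\|_{\scriptscriptstyle\infty}\zeta(\|f\|_{\scriptscriptstyle\infty})\mu(A)$. Hence it is a measure absolutely continuous with respect to $\mu$, and $\Phi_g(f)$ is defined as its Radon--Nikodym derivative. The integral formula and the identity $\Phi_g(f\chi_A)=\Phi_g(f)\chi_A$ are then immediate for every $f$. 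Continuity at $f$ needs only continuity of $\overline{\oV}_g$ at $f$: with $h=f'\chi_A+f\chi_{A^c}$ one has $\int_A(\Phi_g(f)-\Phi_g(f'))\d\mu=\overline{\oV}_g(f)-\overline{\oV}_g(h)$ for every $A$. Choosing $A$ as the set where $\Phi_g(f)>\Phi_g(f')$, together with its complement, gives an $L_1$ bound of $2\varepsilon$.

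\textbf{What your route costs and gains.} Because you extend from simple functions by density, you must prove \emph{uniform} continuity on bounded sets. This forces you to import the stronger fact that $\overline{\oV}_g$ is uniformly continuous on bounded subsets of $B(\sn)_{\scriptscriptstyle +}$. That is \cite[Theorem~3.8]{TrViIntRep}, not \cite{TrViJMAA}, and the paper only uses it later, in Lemma~\ref{l:limit_omega}. Your Hahn-decomposition estimate (splitting each $A_i$ into $A_i^\pm$ and comparing $\overline{\oV}_g$ on the truncated functions) is exactly the computation of that lemma. In effect you merge Proposition~\ref{p:appendix} with Lemma~\ref{l:limit_omega}, and in return you get the stronger conclusion that $\Phi_g$ is uniformly continuous on bounded sets.

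\textbf{The fallback should be dropped.} Your remark about arguing by contradiction with compactness-type arguments, if only pointwise continuity were available, is not an argument. With only pointwise continuity of $\overline{\oV}_g$, the density extension from simple functions would not be justified. That is precisely the situation the paper's direct, pointwise definition of $\Phi_g(f)$ is designed to handle.
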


\begin{proof}
Fix $f\in B(\sn)_{\scriptscriptstyle +}$. For every $A\in\Sigma_n$ and every $g\in\cfn$ define
\[
    \nu_{f,g}(A)=\overline\oV_g(f\chi_A).
\] 
Since $\overline\oV_g$ is a valuation, for every $A_1,A_2\in\Sigma_n$ with $A_1\cap A_2=\emptyset$, we have
\[
    \nu_{f,g}(A_1\cup A_2)=\nu_{f,g}(A_1)+\nu_{f,g}(A_2). 
\]
Therefore, $\nu_{f,g}$ is finitely additive. Let us now fix $A\in \Sigma_n$ and let $h_m=\sum_{i=1}^{i_m}\lambda_{i,m} \chi_{A_{i,m}}$ be a sequence of simple functions that converge to $f\chi_A$ in the uniform norm with $0\leq h_m\leq  f\chi_A$ and $A_{i,m}\cap A_{j,m}=\emptyset$ whenever $j\neq i$. Then, by continuity of $\overline\oV_g$ 
\[
    \nu_{f,g}(A)=\overline\oV_g(f\chi_A)=\lim_{m\to\infty}\overline\oV_g(h_m)=\lim_{m\to\infty}\sum_{i=1}^{i_m}\nu_{\lambda_{i,m},g}(A_{i,m})
\]
with $A_{i,m}\subset A$. Therefore,
\begin{align*}
    |\nu_{f,g}(A)|&\leq \lim_{m\to\infty}\sum_{i=1}^{i_m}|\nu_{\lambda_{i,m},g}(A_{i,m})|\leq \lim_{m\to\infty}\sum_{i=1}^{i_m}\|g\|_{\scriptscriptstyle \infty} \zeta(\lambda_{i,m})\mu(A_{i,m})\\
    & \leq \|g\|_{\scriptscriptstyle \infty} \zeta(\|f\|_{\scriptscriptstyle \infty}) \lim_{m\to\infty}\mu\left(\bigcup_{i}A_{i,m}\right)\leq \|g\|_{\scriptscriptstyle \infty} \zeta(\|f\|_{\scriptscriptstyle \infty}) \mu(A),
\end{align*}
since $\mu$ is non-negative and countably additive. From this bound, we get that $\nu_{f,g}$ is countably additive and absolutely continuous with respect to $\mu$.

Let $\Phi_g(f)$ be the Radon--Nikodym derivative of $\nu_{f,g}$ with respect to $\mu$. Since $g\in\cfn$, we have $\nu_{\lambda,g}\geq 0$ and $\Phi_g(f)\geq 0$ for every $f\in B(\sn)_{\scriptscriptstyle +}$. For every $A\in\Sigma_n$, we have
\[
    \overline\oV_g(f\chi_A)=\nu_{f,g}(A)=\int_A \Phi_g(f)\d \mu.
\]
Moreover, for every $A,B\in\Sigma_n$,
\[
    \int_{A} \Phi_g(f)\chi_B\d \mu=\int_{A\cap B} \Phi_g(f)\d \mu=\nu(f\chi_A\chi_B)=\int_{A} \Phi_g(f\chi_B)\d \mu,
\]
and it follows that $\Phi_g(f\chi_A)=\Phi_g(f)\chi_A$ as elements in $L^1(\mu)$. 

Now, we need to check that $\Phi_g$ is continuous. First, let $f\in B(\sn)_{\scriptscriptstyle +}$ and $\varepsilon>0$. By the continuity of $\overline\oV_g$, there exists $\delta>0$ such that if $h\in B(\sn)_{\scriptscriptstyle +}$ with $\|f-h\|<\delta$ then $|\overline\oV_g(f)-\overline\oV_g(h)|<\varepsilon$. Now, let $A\in \Sigma_n$ and $f'\in B(\sn)_{\scriptscriptstyle +}$ with $\|f-f'\|<\delta$. Let $h=f'\chi_A+f\chi_{A^c}$, which clearly satisfies $\|f-h\|<\delta$. It follows that
\[
    \Big|\int_A \Phi_g(f)-\Phi_g(f')\d \mu\Big|=|\overline\oV_g(f\chi_A)-\overline\oV_g(f'\chi_A)|=|\overline\oV_g(f)-\overline{V_g}(h)|<\varepsilon.
\]
Now, given $f\in B(\sn)_{\scriptscriptstyle +}$ and $\varepsilon>0$, let $\delta>0$ be as before. Suppose $f'\in B(\sn)_{\scriptscriptstyle +}$ satisfies $\|f-f'\|<\delta$. Let us consider
\[
    A=\{t\in\sn:\Phi_g(f)(t)-\Phi_g(f')(t)>0\}.
\]
We clearly have
\[
    \|\Phi_g(f)-\Phi_g(f')\|=\int_A \Phi_g(f)-\Phi_g(f')\d \mu+\int_{A^c} \Phi_g(f)-\Phi_g(f')\d \mu\leq 2\varepsilon.
\]
\end{proof}

For each $g\in C(\sn)_+$ and $\lambda\in\mathbb{R}_+$, choose a Borel and non-negative representative of $\Phi_g(\lambda{\uno})$, which we denote by
\begin{equation}\label{eq:def_K0}
    K_0(\lambda,s)=\Phi_g(\lambda{\uno})(s).
\end{equation}
We may also assume that $|K_0(\lambda,s)|\leq \zeta (\lambda)\|g\|_{\scriptscriptstyle \infty}$, by the control property.

For $\delta>0$, $\lambda>0$, and $A\in\Sigma_n$, let 
\[
    \omega_\lambda(\delta,A)=\sup\Big\{\int_A|K_0(\alpha,s)-K_0(\alpha',s')|\d \mu(s):\alpha,\alpha'\in[0,\lambda],|\alpha-\alpha'|\leq \delta  \Big\},
\]
and we define
\[
    \omega_\lambda(\delta)=\sup\big\{\sum_{i=1}^m\omega_\lambda(\delta,A_i):\bigcup_{i=1}^mA_i=\sn, A_i\cap A_j=\emptyset \text{ for }i\neq j\big\}.
\]
First, note that the previous object is finite. Indeed, since $K_0(\lambda,s)\geq 0$,
\begin{align*}
    \int_A|K_0(\alpha,s)-K_0(\alpha',s')|\d \mu(s)&\leq\nu_{g,\alpha}(A)+\nu_{g,\alpha'}(A)\\
    &\leq \|g\|_{\scriptscriptstyle \infty} \mu(A) (\zeta(\alpha)+\zeta(\alpha')).
\end{align*}
Hence, since $\zeta$ is bounded on bounded sets,
\[
    \omega_\lambda(\delta)\leq 2\|g\|_{\scriptscriptstyle \infty} \mu(\sn)\sup_{\alpha\leq \lambda}\zeta(\alpha)<\infty.
\]

\begin{lemma}\label{l:limit_omega}
For every $\lambda>0$, we have 
\[
    \lim_{\delta\to0}\omega_\lambda(\delta)=0.
\]
\end{lemma}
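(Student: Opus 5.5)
The plan is to turn $\omega_\lambda(\delta)$ into an $L_1$ distance between $\Phi_g$ evaluated at two simple functions that are uniformly close, and then use the continuity of $\Phi_g$ from Proposition~\ref{p:appendix} together with its locality $\Phi_g(f\chi_A)=\Phi_g(f)\chi_A$. I read the integrand in the definition of $\omega_\lambda(\delta,A)$ as $|K_0(\alpha,s)-K_0(\alpha',s)|$, with the same point $s$ in both terms, since the integration is in $s$.

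\textbf{Step 1 (rewriting the partition sums).} Fix a partition $A_1,\dots,A_m$ of $\sn$ and pairs $\alpha_i,\alpha_i'\in[0,\lambda]$ with $|\alpha_i-\alpha_i'|\le\delta$. Set
\[
f=\sum_i\alpha_i\chi_{A_i},\qquad f'=\sum_i\alpha_i'\chi_{A_i}.
\]
Then $\|f-f'\|_{\scriptscriptstyle\infty}\le\delta$ and both functions take values in $[0,\lambda]$. By locality, $\Phi_g(f)\chi_{A_i}=\Phi_g(\alpha_i\uno)\chi_{A_i}=K_0(\alpha_i,\cdot)\chi_{A_i}$ $\mu$-a.e., and similarly for $f'$. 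Hence
\[
\sum_i\int_{A_i}|K_0(\alpha_i,s)-K_0(\alpha_i',s)|\d\mu(s)=\|\Phi_g(f)-\Phi_g(f')\|_{L_1(\mu)}.
\]
After taking suprema over the pairs (each sup can be approximated within $\varepsilon/m$), this reduces the lemma to the following claim: $\Phi_g$ is uniformly continuous on the set of simple functions with values in $[0,\lambda]$.

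\textbf{Step 2 (uniform continuity via a cover of $[0,\lambda]$).} For each $\beta\in[0,\lambda]$, continuity of $\Phi_g$ at $\beta\uno$ gives some $\delta_\beta>0$ for a prescribed accuracy. Cover $[0,\lambda]$ by finitely many intervals $(\beta_j-\delta_{\beta_j}/2,\beta_j+\delta_{\beta_j}/2)$. Group the indices $i$ according to the interval containing $\alpha_i$, and let $B_j$ be the union of the corresponding sets $A_i$. Define $h_j=f\chi_{B_j}+\beta_j\chi_{B_j^c}$ and let $h_j'$ be defined analogously from $f'$. If $\delta<\min_j\delta_{\beta_j}/2$, then both $h_j$ and $h_j'$ lie within $\delta_{\beta_j}$ of $\beta_j\uno$. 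Locality then gives
\[
\|(\Phi_g(f)-\Phi_g(\beta_j\uno))\chi_{B_j}\|_{L_1}\le\|\Phi_g(h_j)-\Phi_g(\beta_j\uno)\|_{L_1},
\]
and the same for $f'$. Summing over $j$ with the triangle inequality bounds the quantity from Step~1.

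\textbf{Main obstacle.} The difficulty is the bookkeeping in Step~2. The bound obtained is (number of intervals)$\times$(accuracy), while the number of intervals depends on the $\delta_\beta$, which in turn depend on the accuracy. This is circular.

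My first attempt to break the circularity is to replace absolute smallness by smallness proportional to $\mu(B_j)$. I would use locality to show that the continuity modulus at $\beta\uno$ localizes: for any Borel $B$, if $\|h-\beta\uno\|<\delta_\beta$ on $B$, then $\int_B|\Phi_g(h)-\Phi_g(\beta\uno)|\d\mu$ is small relative to $\mu(B)$. The route would be a Lebesgue differentiation or Radon–Nikodym argument applied to the measures $A\mapsto\overline\oV_g(h\chi_A)$, each of which is dominated by $\|g\|_{\scriptscriptstyle\infty}\zeta(\lambda)\mu$. If that fails, I would argue by contradiction instead. Take simple $f_k,f_k'$ with $\|f_k-f_k'\|\to0$ but $\|\Phi_g(f_k)-\Phi_g(f_k')\|_{L_1}\ge\varepsilon$. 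Using the uniform domination by $2\|g\|_{\scriptscriptstyle\infty}\zeta(\lambda)\mu$ (equi-integrability), extract pieces on which the discrepancy concentrates. Then glue them, via locality and the valuation property of $\overline\oV_g$, into a single bounded Borel function at which $\Phi_g$ fails to be continuous, contradicting Proposition~\ref{p:appendix}.
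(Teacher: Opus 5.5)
Your Step~1 is correct, and so is your reading of the integrand as $|K_0(\alpha,s)-K_0(\alpha',s)|$. But Step~1 only reduces the lemma to uniform continuity of $\Phi_g$ on $\{f\in B(\sn)_{\scriptscriptstyle +}:\|f\|_{\scriptscriptstyle\infty}\le\lambda\}$, and that is never established. Step~2 is circular, as you note, and neither fallback repairs it.

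\textbf{First fallback.} The proportional localization is false. Let $\oV(f)$ be the measure with density $\min\{f/\theta,1\}$ with respect to $\hn$, where $\theta(s)$ is the geodesic distance from $s$ to a fixed point; by dominated convergence this is a weak$^*$ continuous valuation. Take $g=\uno$, $\beta=0$, $B=\{\theta<\delta\}$ and $h=\delta\chi_B$. Then $\int_B|\Phi_{\uno}(h)-\Phi_{\uno}(0)|\d\mu=\hn(B)$. Since $\mu\le c\,\hn$ for a constant $c$, this is at least $\mu(B)/c$ however small $\delta$ is. The modulus of continuity of the kernel in $\lambda$ genuinely depends on $s$ and is only integrable, not uniformly small.

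\textbf{Second fallback.} It fails for the very reason you invoke. Gluing pieces of different $f_k$ into one function needs pairwise disjoint pieces $A_k$, so $\mu(A_k)\to0$. Then $\int_{A_k}|\Phi_g(f_k)-\Phi_g(f_k')|\d\mu\le 2\|g\|_{\scriptscriptstyle\infty}\zeta(\lambda)\mu(A_k)\to0$. Equi-integrability says the discrepancy \emph{cannot} concentrate on small sets, so the glued function carries no discontinuity of fixed size.

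\textbf{What the paper uses.} The missing ingredient is that $\overline\oV_g$ itself is uniformly continuous on that ball, which the paper imports from \cite[Theorem 3.8]{TrViIntRep}. This uniform statement is what lets the paper avoid your circularity. The paper splits each $A_i$ into $A_i^{\pm}$ according to the sign of $K_0(\alpha_i,\cdot)-K_0(\alpha_i',\cdot)$ and swaps $\alpha_i$ and $\alpha_i'$ on $A_i^-$. The partition sum then becomes exactly $\overline\oV_g(f)-\overline\oV_g(f')$ for two simple functions in the ball with $\|f-f'\|_{\scriptscriptstyle\infty}\le\delta$. With this input your Step~1 also closes at once. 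Compare $f$ with $h=f'\chi_A+f\chi_{A^c}$, where $A=\{\Phi_g(f)>\Phi_g(f')\}$, as in the proof of Proposition~\ref{p:appendix}; this gives $\|\Phi_g(f)-\Phi_g(f')\|_{L_1}\le2\varepsilon$.

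\textbf{A self-contained alternative.} If you want to avoid the citation, a contradiction argument using only Proposition~\ref{p:appendix} can be made to work, but it must be pointwise rather than by gluing.
Let $\Omega_\delta(s)$ be the supremum of $|K_0(q,s)-K_0(q',s)|$ over rationals $q,q'\in[0,\lambda]$ with $|q-q'|\le\delta$. By the $L_1$-continuity of $\alpha\mapsto\Phi_g(\alpha\uno)$, one has $\omega_\lambda(\delta)\le\int\Omega_{2\delta}\d\mu$.
If $\int\Omega_\delta\d\mu\not\to0$, monotone convergence yields $\varepsilon>0$ and a set $E$ with $\mu(E)>0$. On $E$, every neighbourhood of some point $a(s)$ contains rationals $q,q'$ with $|K_0(q,s)-K_0(q',s)|>\varepsilon$.
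A Borel choice of $a(s)$ and of such rational pairs gives countably valued $h_k,h_k'$ converging uniformly to $a\chi_E$. For these, locality gives $\Phi_g(h_k)=K_0(h_k(\cdot),\cdot)$ a.e., so $\|\Phi_g(h_k)-\Phi_g(h_k')\|_{L_1}\ge\varepsilon\mu(E)$. This contradicts the continuity of $\Phi_g$ at $a\chi_E$.
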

\begin{proof}
Given $\varepsilon>0$, since $\oV_g$ is uniformly continuous on bounded sets \cite[Theorem 3.8]{TrViIntRep}, there is $\delta>0$ such that $|\overline\oV_g(f)-\overline\oV_g(f')|\leq \frac{\varepsilon}{3}$ whenever $f,f'\in B(\sn)_{\scriptscriptstyle +}$ satisfy $\|f\|_{\scriptscriptstyle \infty},\|f'\|_{\scriptscriptstyle \infty}\leq \lambda$ and $\|f-f'\|_{\scriptscriptstyle \infty}<\delta$.

Let $(A_i)_{i=1}^m\subset\Sigma_n$ pairwise disjoint with $\bigcup_{i=1}^mA_i=\sn$ such that
\[
    \omega_\lambda(\delta)\leq \sum_{i=1}^m\omega_\lambda(\delta,A_i)+\frac{\varepsilon}{3}.
\]
For $1\leq i\leq m$, let $\alpha_i,\alpha_i'\in[0,\lambda]$ with $|\alpha_i-\alpha_i'|\leq \delta$, such that
\[
    \omega_\lambda(\delta,A_i)\leq \int_{A_i}|K_0(\alpha_i,s)-K_0(\alpha_i',s)|\d \mu(s)+\frac{\varepsilon}{3m}.
\]
Let $A_i^+=\{s\in A_i:K_0(\alpha_i,s)\geq K_0(\alpha_i',s)\}$ and $A_i^-=\{s\in A_i: K_0(\alpha_i,s)<K_0(\alpha_i',s)\}$, which belong to $\Sigma_n$. Set
\[
    f=\sum_{i=1}^m \alpha_i\chi_{A_i^+}+\alpha_i'\chi_{A_i^-}\quad \text{and}\quad f'=\sum_{i=1}^m \alpha_i\chi_{A_i^-}+\alpha_i'\chi_{A_i^+}.
\]
Clearly, we have $\|f\|_{\scriptscriptstyle \infty},\|f'\|_{\scriptscriptstyle \infty}\leq\lambda$ and $\|f-f'\|_{\scriptscriptstyle \infty}\leq \delta$. Hence, $|\overline\oV_g(f)-\overline\oV_g(f')|\leq \frac{\varepsilon}{3}$, which yields
\begin{align*}
    \omega_\lambda(\delta) &\leq \sum_{i=1}^m\int_{A_i}|K_0(\alpha_i,s)-K_0(\alpha_i',s)|\d \mu(s)   +\frac{2\varepsilon}{3}\\
    &=\sum_{i=1}^m\int_{A_i^+}K_0(\alpha_i,s)-K_0(\alpha_i',s)\d \mu(s) \\
    & \quad\hspace{25mm} +\sum_{i=1}^m\int_{A_i^-}K_0(\alpha_i',s)-K_0(\alpha_i,s)\d \mu(s)  +\frac{2\varepsilon}{3}\\
    &=\sum_{i=1}^m\overline\oV_g(\alpha_i\chi_{A_i^+})-\overline\oV_g(\alpha_i'\chi_{A_i^+})+\overline\oV_g(\alpha_i'\chi_{A_i^-})-\overline\oV_g(\alpha_i\chi_{A_i^-})+\frac{2\varepsilon}{3} \\
    &=\overline\oV_g(f)-\overline\oV_g(f')+\frac{2\varepsilon}{3}\leq \varepsilon.
\end{align*}
\end{proof}

\begin{lemma}\label{l:appendix}
There exists a Borel set $A_0\subset\sn$ with $\mu(A_0)=0$ such that $K_0(\cdot,s)$ is uniformly continuous on every bounded set of rational numbers  for every $s\not\in A_0$.
\end{lemma}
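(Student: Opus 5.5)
The plan is to turn the integrated modulus bound of Lemma \ref{l:limit_omega} into a pointwise one through a summable sequence of scales and Borel--Cantelli. Fix $N\in\mathbb N$ and work with $\lambda=N$ first. Choose $\delta_k\downarrow 0$ such that $\omega_N(\delta_k)\le 4^{-k}$, which Lemma \ref{l:limit_omega} allows.

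The first step is to bound a pointwise modulus of continuity on the rationals. For each $k$ define the Borel function
\[
M_k(s)=\sup\{|K_0(\alpha,s)-K_0(\alpha',s)|:\alpha,\alpha'\in\mathbb Q\cap[0,N],\ |\alpha-\alpha'|\le\delta_k\}.
\]
It is Borel because the supremum is taken over a countable family. I will show $\int_{\sn}M_k\d\mu\le\omega_N(\delta_k)$. Enumerate the countably many admissible pairs $(\alpha_j,\alpha_j')$ and set $M_k^{(J)}=\max_{j\le J}|K_0(\alpha_j,\cdot)-K_0(\alpha_j',\cdot)|$. Partition $\sn$ into the Borel sets $A_j$ on which the maximum is first attained by the index $j$. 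This gives
\[
\int M_k^{(J)}\d\mu=\sum_{j\le J}\int_{A_j}|K_0(\alpha_j,s)-K_0(\alpha_j',s)|\d\mu(s)\le\sum_j\omega_N(\delta_k,A_j)\le\omega_N(\delta_k).
\]
The middle inequality uses the definition of $\omega_N(\delta,A)$ with the same point $s$ in both arguments; I read the $s'$ in that definition as a typo for $s$. Monotone convergence in $J$ then gives the claim.

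The second step is Chebyshev followed by Borel--Cantelli. We have $\mu(\{M_k>2^{-k}\})\le 2^{-k}$, which is summable. So the set $A_0^N=\limsup_k\{M_k>2^{-k}\}$ is Borel and $\mu$-null. For $s\notin A_0^N$ there is $k_0(s)$ with $M_k(s)\le 2^{-k}$ for all $k\ge k_0(s)$. Given $\varepsilon>0$, pick $k\ge k_0(s)$ with $2^{-k}<\varepsilon$. Then any rationals $\alpha,\alpha'\in[0,N]$ with $|\alpha-\alpha'|\le\delta_k$ satisfy $|K_0(\alpha,s)-K_0(\alpha',s)|\le\varepsilon$. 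This is exactly uniform continuity of $K_0(\cdot,s)$ on $\mathbb Q\cap[0,N]$.

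Finally, set $A_0=\bigcup_N A_0^N$, a countable union of null Borel sets. Every bounded set of rationals in $\rp$ lies in some $[0,N]$, so the statement follows.

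The main obstacle is the first step: justifying that the pointwise supremum over pairs is controlled by $\omega_N(\delta)$. This needs the partition trick to be compatible with the sup-over-partitions in the definition of $\omega_\lambda(\delta)$, and it needs measurability, which is why I restrict to rationals.
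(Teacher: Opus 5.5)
Your proof is correct and follows essentially the same route as the paper's. Both arguments disjointify $\sn$ according to which rational pair realizes the oscillation and compare the result with $\omega_N(\delta)$; your reading of $s'$ as $s$ is the one the paper itself uses. Both then apply a Chebyshev-type bound and Borel--Cantelli along a sequence $\delta_m\to 0$, and finish with a countable union over $N$. The only difference is cosmetic: the paper bounds $\varepsilon\,\mu(A(\delta,\varepsilon))\le\omega_k(\delta)$ directly and then takes an extra union over $\varepsilon_j\to 0$, whereas you integrate the supremum first and tie $\varepsilon=2^{-k}$ to the scale $\delta_k$, which removes that extra union.
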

\begin{proof}
For $k\in\mathbb{N}$, let $Q_k=\mathbb{Q}\cap [0,k]$. Given $\delta>0$ and $\varepsilon>0$, let
\begin{multline}
    A(\delta,\varepsilon)\\
    =\left\{s\in\sn\colon \sup\left\{|K_0(\lambda,s)-K_0(\lambda',s)|\colon \lambda,\lambda'\in Q_k,|\lambda-\lambda'|\leq\delta  \right\}>\varepsilon   \right\}.
\end{multline}
Given $\lambda,\lambda'\in Q_k$ with $|\lambda-\lambda'|\leq \delta$, define
\[
    B(\lambda,\lambda',\delta,\varepsilon)=\{s\in\sn:|K_0(\lambda,s)-K_0(\lambda',s)|>\varepsilon\}.
\]
Let $(\lambda_i,\lambda_i')_{i\in\mathbb{N}}$ be an enumeration of all pairs $(\lambda,\lambda')$ where $\lambda,\lambda'\in Q_k$ and $|\lambda-\lambda'|\leq\delta$. Let $A_1(\delta,\varepsilon)=B(\lambda_1,\lambda_1',\delta,\varepsilon)$ and
\[
    A_i(\delta,\varepsilon)=B(\lambda_i,\lambda_i',\delta,\varepsilon)\setminus\bigcup_{j=1}^{i-1}A_j(\delta,\varepsilon).
\]
Clearly, $\bigcup_{i=1}^\infty A_i(\delta,\varepsilon)=A(\delta,\varepsilon)$. Obviously, $B(\lambda_i,\lambda_i',\delta,\varepsilon)$ is Borel and, therefore, $A(\delta,\varepsilon)$ is Borel as well. It follows that
\begin{align*}
    \varepsilon\mu\left(A(\delta,\varepsilon)\right)&= \varepsilon\sum_{i=1}^\infty\mu(A_i(\delta,\varepsilon))=\sum_{i=1}^\infty \int_{A_i(\delta,\varepsilon)}\varepsilon\d \mu\\
    &\leq \sum_{i=1}^\infty \int_{A_i(\delta,\varepsilon)}|K_0(\lambda_i,s)-K_0(\lambda_i',s)|\d \mu(s)\leq \omega_k(\delta).
\end{align*}
By Lemma \ref{l:limit_omega}, for every $\varepsilon>0$ we have 
\[
    \lim_{\delta\to0}\mu\left(A(\delta,\varepsilon)\right)=0.
\]
Now, for each $\varepsilon>0$, pick a sequence $\delta_m\to 0$ such that
\[
    \sum_{i=1}^\infty \mu( A(\delta_m,\varepsilon))<\infty,
\]
and set
\[
    A(\varepsilon)=\bigcap_{k=1}^\infty\bigcup_{m=k}^\infty A(\delta_m,\varepsilon).
\]
Clearly, $A(\varepsilon)$ is Borel and
\[
    \mu(A(\varepsilon))=\lim_{k\to\infty}\mu\left(\bigcup_{m=k}^\infty A(\delta_m,\varepsilon)\right)\leq \lim_{k\to\infty}\sum_{m=k}^\infty\mu(A(\delta_m,\varepsilon))=0.
\]
Now, take $\varepsilon_j\to 0$ and set $A^k=\bigcup_{j=1}^\infty A(\varepsilon_j)$ which is also Borel and $\mu(A^k)=0$. Thus, $K_0(\cdot,s)$ is uniformly continuous on $Q_k$ for every $s\in \sn\setminus A^k$.

As a final step, set $A_0=\bigcup_{k=1}^\infty A^k$, which is Borel with $\mu(A_0)=0$. Hence,  $K_0(\cdot,s)$
is uniformly continuous on every bounded set of rational numbers for every $s\not\in A_0$.
\end{proof}

\begin{lemma}
For $g\in\cfn$, there is $K_g\colon \rp\times \sn \to \R$ such that
$$\oV_g(f)= \int_{\sn } K_g(f(s), s) \d\mu(s)$$
for every $f\in \bfn$. Moreover, $K_g$ satisfies the following conditions.
\begin{itemize}
    \item For each $\lambda\ge 0$, the function $K_g(\lambda,\cdot)$ coincides with the Radon--Nikodym derivative with respect to $\mu$ of the measure $\nu_{\lambda,g}$ associated to the scalar-valued valuation $\oV_g$ according to \eqref{eq:def nu lambda}, and in particular, we have $K_g(\lambda, \cdot)\in L_1(\sn,\mu)$.
    \item The function $K_g(\cdot,s)$ is continuous for every $s\in \sn $.
\end{itemize}
\end{lemma}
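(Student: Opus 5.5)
The plan is to build $K_g$ from $K_0$ of \eqref{eq:def_K0} by modifying it on the $\mu$-null set $A_0$ of Lemma \ref{l:appendix}, and then extending from rational to real values of $\lambda$.

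\textbf{Step 1 (definition of $K_g$).} For $s\notin A_0$, the map $\lambda\mapsto K_0(\lambda,s)$ is uniformly continuous on each $Q_k=\mathbb Q\cap[0,k]$. It therefore extends uniquely to a continuous function on $\rp$, and we call this extension $K_g(\cdot,s)$. For $s\in A_0$ we set $K_g(\lambda,s)=0$. With this choice, $K_g(\cdot,s)$ is continuous for every $s\in\sn$.

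\textbf{Step 2 (measurability).} For $\lambda\in\rp$, choose rationals $\lambda_m\to\lambda$. Then $K_g(\lambda,\cdot)=\lim_m K_0(\lambda_m,\cdot)\chi_{\sn\setminus A_0}$ pointwise, so $K_g(\lambda,\cdot)$ is Borel.

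\textbf{Step 3 (Radon--Nikodym property).} We must show that $K_g(\lambda,\cdot)=\Phi_g(\lambda\uno)$ in $L_1(\mu)$. For rational $\lambda$ this holds by construction, since $K_g(\lambda,\cdot)$ agrees with $K_0(\lambda,\cdot)$ off $A_0$. For general $\lambda$ I would argue as follows:
\begin{itemize}
\item Proposition \ref{p:appendix} gives continuity of $\Phi_g$, so $\Phi_g(\lambda_m\uno)\to\Phi_g(\lambda\uno)$ in $L_1(\mu)$.
\item Pointwise off $A_0$ we have $K_0(\lambda_m,\cdot)\to K_g(\lambda,\cdot)$.
\item The functions are dominated by $\zeta(\lambda+1)\|g\|_\infty$.
\end{itemize}
Hence the $L_1$ limit and the pointwise limit coincide $\mu$-a.e. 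By Proposition \ref{p:appendix}, $\int_A\Phi_g(\lambda\uno)\,d\mu=\overline\oV_g(\lambda\chi_A)=\nu_{\lambda,g}(A)$, which is the first bullet.

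\textbf{Step 4 (integral representation).}
\begin{itemize}
\item \emph{Simple functions.} For $h=\sum_i\lambda_i\chi_{A_i}$ with disjoint $A_i$, the valuation property (the additivity $\nu_{f,g}$ used in Proposition \ref{p:appendix}) together with $\Phi_g(f\chi_A)=\Phi_g(f)\chi_A$ gives
$$\overline\oV_g(h)=\sum_i\nu_{\lambda_i,g}(A_i)=\sum_i\int_{A_i}K_g(\lambda_i,s)\,d\mu=\int K_g(h(s),s)\,d\mu.$$
\item \emph{General $f\in\bfn$.} Take simple $h_m\to f$ uniformly. The left side converges by continuity of $\overline\oV_g$. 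On the right, $K_g(h_m(s),s)\to K_g(f(s),s)$ for every $s$ by continuity in $\lambda$, and the integrands are uniformly bounded. Dominated convergence then finishes the proof.
\end{itemize}
The measurability of $s\mapsto K_g(f(s),s)$ follows from the strong Carath\'eodory property. Here we use that $K_g$ is Borel in $s$ and continuous in $\lambda$, approximating $f$ by simple functions.

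\textbf{Extension to arbitrary $g$.} The construction above assumes $g\in\cfn$. For general $g\in C(\sn)$, write $g=g_+-g_-$ and set $K_g=K_{g_+}-K_{g_-}$. Both properties pass to the difference by linearity of $g\mapsto\nu_{\lambda,g}$ (Lemma \ref{l:ABextension}).

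\textbf{Main obstacle.} The delicate point is Step 3 at irrational $\lambda$: the continuous extension must agree a.e. with the Radon--Nikodym derivative. This requires combining the $L_1$-continuity of $\Phi_g$ with the uniform bound from the control property, so that the pointwise and $L_1$ limits can be identified.
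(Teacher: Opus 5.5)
Your proposal is correct and follows essentially the same route as the paper: you extend $K_0$ continuously in $\lambda$ off the null set $A_0$ and set it to zero on $A_0$, identify $K_g(\lambda,\cdot)$ with the Radon--Nikodym derivative at irrational $\lambda$ by a limiting argument, and pass from simple functions to $\bfn$ by dominated convergence. The only difference is cosmetic: at irrational $\lambda$ you identify the limit through the $L_1$-continuity of $\Phi_g$ from Proposition~\ref{p:appendix}, whereas the paper applies continuity of $\overline{\oV}_g$ to $\lambda_m\chi_A$ and then dominated convergence for each set $A$.
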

\begin{proof}
Let $K_0(\lambda,s)$ be defined as in \eqref{eq:def_K0} and recall that $0\leq K_0(\lambda,s)\leq \zeta(\lambda)\|g\|_{\scriptscriptstyle \infty} $ for any $\lambda\in\rp$ and any $s\in\sn$. By Lemma \ref{l:appendix}, we can define
\[
    K_g(\lambda,s)=\left\{ \begin{array}{cl}
        \displaystyle\lim_{m}K_0(\lambda_m,s) & \text{if }s\not\in A_0,\text{ and }\lambda_m\to \lambda\text{ with }\lambda_m\in\mathbb{Q} \\
        0 & \text{if }s\in A_0
    \end{array}   \right.
\]
Clearly, for every fixed $s\in\sn$ the mapping $\lambda\mapsto K_g(\lambda,s)$ is continuous and for every $\lambda\geq0$. If $\lambda\in\mathbb{Q}_{\scriptscriptstyle +}$, then the mapping $s\mapsto K_g(\lambda,s)=K_0(\lambda, s)\chi_{A_0^c}(s)$ is Borel, since $s\mapsto K_0(\lambda,s)$ is Borel and $s\mapsto \chi_{A_0^c}$ is also. If $\lambda\not\in\mathbb{Q}_{\scriptscriptstyle +}$, the mapping $s\mapsto \lim_{m}K_0(\lambda_m,s)$ is Borel, since it is the limit of Borel mappings and, thus, $s\mapsto K_g(\lambda,s)$ is also Borel.

We check that $s\mapsto K_g(\lambda,s)$ is the Radon--Nikodym derivative with respect to $\mu$ of the measure $\nu_{\lambda,g}$. If $\lambda\in\mathbb{Q}_{\scriptscriptstyle +}$, $K_0(\lambda,s)=\frac{d\nu_{\lambda,g}}{d\mu}$ and since $\mu(A_0)=0$ one has that $K_0(\lambda,\cdot)=K_g(\lambda,\cdot)$ as elements of $L^1(\mu)$.

If $\lambda\not\in\mathbb{Q}_n$, take $\{\lambda_m\}_{m\in\mathbb{N}}\subset\mathbb{Q}_{\scriptscriptstyle +}$ such that $\lambda_m\to \lambda$. Since $\overline\oV_g$ is continuous,
\[
    \nu_{\lambda,g}(A)=\overline\oV_g(\lambda\chi_{A})=\lim_{m}\overline\oV_g(\lambda_m\chi_A)=\lim_m\int_A K_g(\lambda_m,s)\d \mu(s).
\]
Now, $K_g(\lambda_m,s)\leq K_0(\lambda_m,s)\leq \|g\|_{\scriptscriptstyle \infty} \zeta(\lambda_m)$. As $\zeta$ is bounded on bounded sets, the Dominated Convergence Theorem yields
\[
    \nu_{\lambda,g}(A)=\int_A K_g(\lambda,s)\d \mu(s).
\]
Since this holds for every $A\in\Sigma_n$, $K_g(\lambda,\cdot)$ coincides with $\frac{d\nu_{\lambda,g}}{d\mu}$ as elements in $L^1(\mu)$.

As a final step, we prove the representation. In Proposition \ref{p:appendix}, we checked that $\Phi_g(f\chi_A)=\Phi_g(f)\chi_A$, that is,
\[
    \int_\sn K_0(\lambda\chi_A(s),s)\d \mu(s)=\int_A K_0(\lambda,s)\d \mu(s)=\nu_{\lambda,g}(A)=\overline\oV_g(\lambda\chi_A).
\]
Hence,
\[
    \overline\oV_g(\lambda\chi_A)=\int_\sn K_g(\lambda\chi_A(s),s)\d \mu(s)
\]
holds for indicator functions with rational coefficients. It is easy to check that it can be extended to simple functions, and these are in turn dense in the set of bounded Borel functions.

Now, to extend the representation for every bounded Borel function we will show that the mapping on the right side of the equation, that is,
\[
    f\mapsto \int_\sn K_g(f(s),s)\d \mu(s),
\]
is a continuous map. Therefore, since it coincides with $\overline\oV_g$ on a dense set, they will coincide on the bounded Borel functions. Let $f\in B(\sn)_{\scriptscriptstyle +}$ and $\{f_m\}_{m\in\mathbb{N}}\subset B(\sn)_{\scriptscriptstyle +}$ such that $\|f-f_m\|_{\scriptscriptstyle \infty}\to 0$ as $m\to\infty$. In particular, $f_m(s)\to f(s)$ as $m\to\infty$ for every $s\in\sn$, which implies that $K_g(f_m(s),s)\to K_g(f(s),s)$ as $m\to\infty$. For every $\lambda\geq 0$ and every $s\in\sn$ we have
\[
    K_g(\lambda,s)\leq \|g\|_{\scriptscriptstyle \infty} \zeta(\lambda).
\]
Hence, taking $\lambda=\sup_{m}\|f_m\|$ and using the fact that $\zeta$ is bounded on bounded sets we get
\[
    K_g(f_m(s),s)\leq \|g\|_{\scriptscriptstyle \infty}  \sup_{\alpha\leq \lambda}\zeta(\alpha).
\]
Applying the Dominated Convergence Theorem it follows that
\[
    \lim_{m\to\infty}\int_\sn K_g(f_m(s),s)\d \mu(s)=\int_\sn K_g(f(s),s)\d \mu(s),
\]
which concludes the proof.
\end{proof}
Finally, note that if $g\in C(\sn)$, then
\[
    \oV_g=\oV_{g^+}-\oV_{g^-}.
\]
Therefore, the function defined as
\[
    K_{g}(\lambda,s)=K_{g^+}(\lambda,s)-K_{g^-}(\lambda,s)
\]
will satisfy Lemma \ref{l:weak pseudokernel}.

\begin{remark}
The previous result can also be obtained by an application of the Radon--Nikodym theorem: if a scalar valuation $\oW$ admits an integral representation with respect to a measure $\sigma$, then for every measure $\eta$ such that $\sigma$ is absolutely continuous with respect to $\eta$, the valuation $\oW$ admits a similar integral representation with respect to $\eta$. Finally, one can check that the control measure defined in \cite{TrViIntRep} is absolutely continuous with respect to the control measure $\mu$ defined in Section \ref{S:control}, which yields the result.
\end{remark}

\bigskip
\bibliography{measurevalued}

@article {Alesker,
    AUTHOR = {Alesker, S.},
     TITLE = {Valuations on convex functions and convex sets and
              {M}onge--{A}mp\`ere operators},
   JOURNAL = {Adv. Geom.},
  FJOURNAL = {Advances in Geometry},
    VOLUME = {19},
      YEAR = {2019},
      PAGES = {313--322},
      ISSN = {1615-715X,1615-7168},
   MRCLASS = {52B45 (52A20 52A41)},
  MRNUMBER = {3982569},
MRREVIEWER = {Fabian\ Mussnig},
       DOI = {10.1515/advgeom-2018-0031},
       URL = {https://doi.org/10.1515/advgeom-2018-0031},
}

@book {C,
    AUTHOR = {Cohn, D. L.},
     TITLE = {Measure Theory},
    SERIES = {Birkh\"{a}user Advanced Texts: Basler Lehrb\"{u}cher.},
   EDITION = {Second},
 PUBLISHER = {Birkh\"{a}user/Springer, New York},
      YEAR = {2013},
     PAGES = {xxi+457},
      ISBN = {978-1-4614-6955-1; 978-1-4614-6956-8},
   MRCLASS = {28-01},
  MRNUMBER = {3098996},
MRREVIEWER = {Ville Suomala},
       DOI = {10.1007/978-1-4614-6956-8},
       URL = {https://doi.org/10.1007/978-1-4614-6956-8},
}

@article {CLM2017,
    AUTHOR = {Colesanti, A. and Ludwig, M. and Mussnig, F.},
     TITLE = {Minkowski valuations on convex functions},
   JOURNAL = {Calc. Var. Partial Differential Equations},
  FJOURNAL = {Calculus of Variations and Partial Differential Equations},
    VOLUME = {56},
      YEAR = {2017},
     PAGES = {Paper No. 162},
      ISSN = {0944-2669,1432-0835},
   MRCLASS = {52B45 (26B25 46B20 46E35 52A21 52A41)},
  MRNUMBER = {3715395},
MRREVIEWER = {Thomas\ Jahn},
       DOI = {10.1007/s00526-017-1243-4},
       URL = {https://doi.org/10.1007/s00526-017-1243-4},
}

@article {CLM2019,
    AUTHOR = {Colesanti, A. and Ludwig, M. and Mussnig, F.},
     TITLE = {Valuations on convex functions},
   JOURNAL = {Int. Math. Res. Not. IMRN},
  FJOURNAL = {International Mathematics Research Notices. IMRN},
      YEAR = {2019},
     PAGES = {2384--2410},
      ISSN = {1073-7928,1687-0247},
   MRCLASS = {52A41 (52B45)},
  MRNUMBER = {3942165},
MRREVIEWER = {Thomas\ Jahn},
       DOI = {10.1093/imrn/rnx189},
       URL = {https://doi.org/10.1093/imrn/rnx189},
}

@article {CLM2020,
    AUTHOR = {Colesanti, A. and Ludwig, M. and Mussnig, F.},
     TITLE = {A homogeneous decomposition theorem for valuations on convex
              functions},
   JOURNAL = {J. Funct. Anal.},
  FJOURNAL = {Journal of Functional Analysis},
    VOLUME = {279},
      YEAR = {2020},
     PAGES = {108573},
      ISSN = {0022-1236,1096-0783},
   MRCLASS = {52B45 (26B25 52A21 52A41)},
  MRNUMBER = {4097279},
MRREVIEWER = {N.\ Lombardi},
       DOI = {10.1016/j.jfa.2020.108573},
       URL = {https://doi.org/10.1016/j.jfa.2020.108573},
}

@article {CLM2023,
    AUTHOR = {Colesanti, A. and Ludwig, M. and Mussnig, F.},
     TITLE = {The {H}adwiger theorem on convex functions, {IV}: {T}he
              {K}lain approach},
   JOURNAL = {Adv. Math.},
  FJOURNAL = {Advances in Mathematics},
    VOLUME = {413},
      YEAR = {2023},
     PAGES = {Paper No. 108832},
      ISSN = {0001-8708,1090-2082},
   MRCLASS = {52B45 (26B25 49Q20 52A39 52A41)},
  MRNUMBER = {4526493},
MRREVIEWER = {Aris\ Daniilidis},
       DOI = {10.1016/j.aim.2022.108832},
       URL = {https://doi.org/10.1016/j.aim.2022.108832},
}

@article {CLM2022,
    AUTHOR = {Colesanti, A. and Ludwig, M. and Mussnig, F.},
     TITLE = {The {H}adwiger theorem on convex functions, {III}: {S}teiner
              formulas and mixed {M}onge-{A}mp\`ere measures},
   JOURNAL = {Calc. Var. Partial Differential Equations},
  FJOURNAL = {Calculus of Variations and Partial Differential Equations},
    VOLUME = {61},
      YEAR = {2022},
     PAGES = {Paper No. 181},
      ISSN = {0944-2669,1432-0835},
   MRCLASS = {52A41 (26B25 49Q20 52A39 52B45)},
  MRNUMBER = {4453228},
}

@Article{CLM2024b,
  author  = {Colesanti, A. and Ludwig, M. and Mussnig, F.},
  journal = {Amer. J. Math.},
  title   = {The {H}adwiger theorem on convex functions,~{II}. {C}auchy--{K}ubota formulas},
  year ={2024},
  volume ={147},
 pages= {927-955},}

@Article{CLM2024,
  AUTHOR = {Colesanti, A. and Ludwig, M. and Mussnig, F.},
     TITLE = {The {H}adwiger theorem on convex functions,~{I}},
   JOURNAL = {Geom. Funct. Anal.},
  FJOURNAL = {Geometric and Functional Analysis},
    VOLUME = {34},
      YEAR = {2024},
       PAGES = {1839--1898},
      ISSN = {1016-443X,1420-8970},
   MRCLASS = {52A41 (26B25 49Q20 52A39 52B45)},
  MRNUMBER = {4823212},
       DOI = {10.1007/s00039-024-00693-8},
       URL = {https://doi.org/10.1007/s00039-024-00693-8},
}

@article {CPTV2020,
    AUTHOR = {Colesanti, A. and Pagnini, D. and Tradacete, P.
              and Villanueva, I.},
     TITLE = {A class of invariant valuations on {${\rm Lip}(S^{n-1})$}},
   JOURNAL = {Adv. Math.},
  FJOURNAL = {Advances in Mathematics},
    VOLUME = {366},
      YEAR = {2020},
     PAGES = {107069},
      ISSN = {0001-8708,1090-2082},
   MRCLASS = {52A20 (26A16 28A78)},
       DOI = {10.1016/j.aim.2020.107069},
       URL = {https://doi.org/10.1016/j.aim.2020.107069},
}

@article {CPTV2021,
    AUTHOR = {Colesanti, A. and Pagnini, D. and Tradacete, P.
              and Villanueva, I.},
     TITLE = {Continuous valuations on the space of {L}ipschitz functions on
              the sphere},
   JOURNAL = {J. Funct. Anal.},
  FJOURNAL = {Journal of Functional Analysis},
    VOLUME = {280},
      YEAR = {2021},
     PAGES = {Paper No. 108873},
      ISSN = {0022-1236,1096-0783},
   MRCLASS = {52B45 (26A16 28A12)},
       DOI = {10.1016/j.jfa.2020.108873},
       URL = {https://doi.org/10.1016/j.jfa.2020.108873},
}

@article {Diestel-Faires,
    AUTHOR = {Diestel, J. and Faires, B.},
     TITLE = {On vector measures},
   JOURNAL = {Trans. Amer. Math. Soc.},
  FJOURNAL = {Transactions of the American Mathematical Society},
    VOLUME = {198},
      YEAR = {1974},
     PAGES = {253--271},
      ISSN = {0002-9947},
   MRCLASS = {46G10 (28A45)},
       DOI = {10.2307/1996758},
       URL = {https://doi.org/10.2307/1996758},
}

@book {fabianetal,
    AUTHOR = {Fabian, M. and Habala, P. and H\'{a}jek, P. and Montesinos
              Santaluc\'{\i}a, V. and Pelant, J. and Zizler, V.},
     TITLE = {Functional Analysis and Infinite-dimensional Geometry},
    SERIES = {CMS Books in Mathematics/Ouvrages de Math\'{e}matiques de la SMC},
    VOLUME = {8},
 PUBLISHER = {Springer-Verlag, New York},
      YEAR = {2001},
     PAGES = {x+451},
      ISBN = {0-387-95219-5},
   MRCLASS = {46-01 (46B20 46Bxx)},
       DOI = {10.1007/978-1-4757-3480-5},
       URL = {https://doi.org/10.1007/978-1-4757-3480-5},
}

@book {Federer,
    AUTHOR = {Federer, H.},
     TITLE = {Geometric Measure Theory},
    SERIES = {Die Grundlehren der mathematischen Wissenschaften, Band 153},
 PUBLISHER = {Springer-Verlag New York, Inc., New York},
      YEAR = {1969},
     PAGES = {xiv+676},
   MRCLASS = {28.80 (26.00)},
}

@book {Gardner,
    AUTHOR = {Gardner, R.},
     TITLE = {Geometric {T}omography},
    SERIES = {Encyclopedia of Mathematics and its Applications},
    VOLUME = {58},
   EDITION = {Second},
 PUBLISHER = {Cambridge University Press, New York},
      YEAR = {2006},
     PAGES = {xxii+492},
      ISBN = {0-521; 0-521-68493-5},
   MRCLASS = {52A22 (44A12 92C55)},
       DOI = {10.1017/CBO9781107341029},
       URL = {https://doi.org/10.1017/CBO9781107341029},
}

@article {GardnerHugWeilYe,
    AUTHOR = {Gardner, R. and Hug, D. and Weil, W. and Ye,
              D.},
     TITLE = {The dual {O}rlicz--{B}runn--{M}inkowski theory},
   JOURNAL = {J. Math. Anal. Appl.},
  FJOURNAL = {Journal of Mathematical Analysis and Applications},
    VOLUME = {430},
      YEAR = {2015},
     PAGES = {810--829},
      ISSN = {0022-247X,1096-0813},
   MRCLASS = {52A39 (52A40)},
  MRNUMBER = {3351982},
MRREVIEWER = {Qingzhong\ Huang},
       DOI = {10.1016/j.jmaa.2015.05.016},
       URL = {https://doi.org/10.1016/j.jmaa.2015.05.016},
}

@book {Hadwiger,
    AUTHOR = {Hadwiger, H.},
     TITLE = {Vorlesungen \"uber {I}nhalt, {O}berfl\"ache und
              {I}soperimetrie},
 PUBLISHER = {Springer-Verlag, Berlin-G\"ottingen-Heidelberg},
      YEAR = {1957},
     PAGES = {xiii+312},
   MRCLASS = {52.00 (28.00)},
}

@article {HLYZ,
    AUTHOR = {Huang, Y. and Lutwak, E. and Yang, D. and Zhang,G.},
     TITLE = {Geometric measures in the dual {B}runn-{M}inkowski theory and
              their associated {M}inkowski problems},
   JOURNAL = {Acta Math.},
  FJOURNAL = {Acta Mathematica},
    VOLUME = {216},
      YEAR = {2016},
     PAGES = {325--388},
      ISSN = {0001-5962},
   MRCLASS = {52A38 (35J20 35J96)},
       DOI = {10.1007/s11511-016-0140-6},
       URL = {https://doi.org/10.1007/s11511-016-0140-6},
}

@article {Klain1996,
    AUTHOR = {Klain, D.},
     TITLE = {Star valuations and dual mixed volumes},
   JOURNAL = {Adv. Math.},
  FJOURNAL = {Advances in Mathematics},
    VOLUME = {121},
      YEAR = {1996},
     PAGES = {80--101},
      ISSN = {0001-8708,1090-2082},
   MRCLASS = {52A39 (52B45)},
       DOI = {10.1006/aima.1996.0048},
       URL = {https://doi.org/10.1006/aima.1996.0048},
}

@article {Klain1997,
    AUTHOR = {Klain, D.},
     TITLE = {Invariant valuations on star-shaped sets},
   JOURNAL = {Adv. Math.},
  FJOURNAL = {Advances in Mathematics},
    VOLUME = {125},
      YEAR = {1997},
     PAGES = {95--113},
      ISSN = {0001-8708,1090-2082},
   MRCLASS = {52A30 (52B45)},
       DOI = {10.1006/aima.1997.1601},
       URL = {https://doi.org/10.1006/aima.1997.1601},
}

@article {Knoerr2024,
    AUTHOR = {Knoerr, J.},
     TITLE = {Smooth valuations on convex functions},
   JOURNAL = {J. Differential Geom.},
  FJOURNAL = {Journal of Differential Geometry},
    VOLUME = {126},
      YEAR = {2024},
     PAGES = {801--835},
      ISSN = {0022-040X,1945-743X},
   MRCLASS = {52A41 (26B25 52B45 53C65)},
       DOI = {10.4310/jdg/1712344223},
       URL = {https://doi.org/10.4310/jdg/1712344223},
}

@article {Knoerr2024b,
    AUTHOR = {Knoerr, J.},
     TITLE = {Monge-{A}mp\`ere operators and valuations},
   JOURNAL = {Calc. Var. Partial Differential Equations},
  FJOURNAL = {Calculus of Variations and Partial Differential Equations},
    VOLUME = {63},
      YEAR = {2024},
     PAGES = {Paper No. 89},
      ISSN = {0944-2669,1432-0835},
   MRCLASS = {52B45 (26B25 35J96 52A39 53C65)},
       DOI = {10.1007/s00526-024-02698-5},
       URL = {https://doi.org/10.1007/s00526-024-02698-5},
}

@article {KnoerrUlivelli,
    AUTHOR = {Knoerr, J. and Ulivelli, J.},
     TITLE = {From valuations on convex bodies to convex functions},
   JOURNAL = {Math. Ann.},
  FJOURNAL = {Mathematische Annalen},
    VOLUME = {390},
      YEAR = {2024},
      PAGES = {5987--6011},
      ISSN = {0025-5831,1432-1807},
   MRCLASS = {52B45 (26B25 53C65)},
  MRNUMBER = {4816127},
MRREVIEWER = {Dan\ Ma},
       DOI = {10.1007/s00208-024-02902-z},
       URL = {https://doi.org/10.1007/s00208-024-02902-z},
}

@article {LiL,
    AUTHOR = {Li, J. and Ludwig, M.},
     TITLE = {Monge--{A}mp\`ere measures and valuations},
   JOURNAL = {Preprint},
      YEAR = {2026},
}

@article {ML2011,
    AUTHOR = {Ludwig, M.},
     TITLE = {Fisher information and matrix-valued valuations},
   JOURNAL = {Adv. Math.},
  FJOURNAL = {Advances in Mathematics},
    VOLUME = {226},
      YEAR = {2011},
     PAGES = {2700--2711},
      ISSN = {0001-8708,1090-2082},
   MRCLASS = {46E35 (52A20 52B45 62B10 94A17)},
       DOI = {10.1016/j.aim.2010.08.021},
       URL = {https://doi.org/10.1016/j.aim.2010.08.021},
}

@article {ML2012,
    AUTHOR = {Ludwig, M.},
     TITLE = {Valuations on {S}obolev spaces},
   JOURNAL = {Amer. J. Math.},
  FJOURNAL = {American Journal of Mathematics},
    VOLUME = {134},
      YEAR = {2012},
     PAGES = {827--842},
      ISSN = {0002-9327,1080-6377},
   MRCLASS = {52B45 (52A20 52A39 52B11)},
       DOI = {10.1353/ajm.2012.0019},
       URL = {https://doi.org/10.1353/ajm.2012.0019},
}

@incollection {ML2023,
    AUTHOR = {Ludwig, M.},
     TITLE = {Geometric valuation theory},
 BOOKTITLE = {European {C}ongress of {M}athematics},
     PAGES = {93--123},
 PUBLISHER = {EMS~Press, Berlin},
      YEAR = {2023},
      ISBN = {978-3-98547-051-8; 978-3-98547-551-3},
   MRCLASS = {52B45 (26B25 46E35 52A20)},
}

@article {Lutwak75,
    AUTHOR = {Lutwak, E.},
     TITLE = {Dual mixed volumes},
   JOURNAL = {Pacific J. Math.},
  FJOURNAL = {Pacific Journal of Mathematics},
    VOLUME = {58},
      YEAR = {1975},
     PAGES = {531--538},
      ISSN = {0030-8730,1945-5844},
   MRCLASS = {52A40},
       URL = {http://projecteuclid.org/euclid.pjm/1102905685},
}

@article {LiMA,
    AUTHOR = {Li, J. and Ma, D.},
     TITLE = {Laplace transforms and valuations},
   JOURNAL = {J. Funct. Anal.},
  FJOURNAL = {Journal of Functional Analysis},
    VOLUME = {272},
      YEAR = {2017},
     PAGES = {738--758},
      ISSN = {0022-1236,1096-0783},
   MRCLASS = {44A10 (52A20 52B45)},
       DOI = {10.1016/j.jfa.2016.09.011},
       URL = {https://doi.org/10.1016/j.jfa.2016.09.011},
}

@article{mouamine2025klain,
  author  = {M. A. Mouamine and F. Mussnig},
  title   = {A {K}lain--{S}chneider theorem for vector-valued valuations on convex functions},
  year    = {2026},
  journal = {J. Funct. Anal.},
  volume   = {291}, 
  pages ={Art. 111544},
  url     = {https://arxiv.org/abs/2503.07287}
}

@article {Mussnig2021,
    AUTHOR = {Mussnig, F.},
     TITLE = {Valuations on log-concave functions},
   JOURNAL = {J. Geom. Anal.},
  FJOURNAL = {Journal of Geometric Analysis},
    VOLUME = {31},
      YEAR = {2021},
     PAGES = {6427--6451},
      ISSN = {1050-6926,1559-002X},
   MRCLASS = {52A21 (26B25 46B20 52A41 52B45)},
       DOI = {10.1007/s12220-020-00539-3},
       URL = {https://doi.org/10.1007/s12220-020-00539-3},
}

@article {Mussnig2019,
    AUTHOR = {Mussnig, F.},
     TITLE = {Volume, polar volume and {E}uler characteristic for convex
              functions},
   JOURNAL = {Adv. Math.},
  FJOURNAL = {Advances in Mathematics},
    VOLUME = {344},
      YEAR = {2019},
     PAGES = {340--373},
      ISSN = {0001-8708,1090-2082},
   MRCLASS = {26B25 (46A40 52A20 52A41 52B45)},
       DOI = {10.1016/j.aim.2019.01.012},
       URL = {https://doi.org/10.1016/j.aim.2019.01.012},
}

@article {Mussnig2021b,
    AUTHOR = {Mussnig, F.},
     TITLE = {{${\rm SL}(n)$} invariant valuations on super-coercive convex
              functions},
   JOURNAL = {Canad. J. Math.},
  FJOURNAL = {Canadian Journal of Mathematics. Journal Canadien de
              Math\'ematiques},
    VOLUME = {73},
      YEAR = {2021},
     PAGES = {108--130},
      ISSN = {0008-414X,1496-4279},
   MRCLASS = {52A41 (26B25 46A40 52A20 52B45)},
       DOI = {10.4153/S0008414X19000531},
       URL = {https://doi.org/10.4153/S0008414X19000531},
}

@article {Schneider:75,
    AUTHOR = {Schneider, R.},
     TITLE = {Kinematische {B}er\"{u}hrma\ss e f\"{u}r konvexe {K}\"{o}rper},
   JOURNAL = {Abh. Math. Sem. Univ. Hamburg},
  FJOURNAL = {Abhandlungen aus dem Mathematischen Seminar der Universit\"{a}t
              Hamburg},
    VOLUME = {44},
      YEAR = {1975},
     PAGES = {12--23 (1976)},
      ISSN = {0025-5858},
   MRCLASS = {52A50},
       DOI = {10.1007/BF02992942},
       URL = {https://doi.org/10.1007/BF02992942},
}

@Book{Schneider,
  author    = {Schneider, R.},
  publisher = {Cambridge University Press, Cambridge},
  title     = {Convex {B}odies: the {B}runn-{M}inkowski {T}heory},
  year      = {2014},
  edition   = {{S}econd expanded},
  isbn      = {978-1-107-60101-7},
  series    = {Encyclopedia of Mathe\-matics and its Applications},
  volume    = {151},
  mrclass   = {52A39 (52-02 52A20)},
  pages     = {xxii+736},
}

@article {TrViJMAA,
    AUTHOR = {Tradacete, P. and Villanueva, I.},
     TITLE = {Radial continuous valuations on star bodies},
   JOURNAL = {J. Math. Anal. Appl.},
  FJOURNAL = {Journal of Mathematical Analysis and Applications},
    VOLUME = {454},
      YEAR = {2017},
     PAGES = {995--1018},
      ISSN = {0022-247X},
   MRCLASS = {52A30 (52B45)},
       DOI = {10.1016/j.jmaa.2017.05.026},
       URL = {https://doi.org/10.1016/j.jmaa.2017.05.026},
}

@article {TrViIntRep,
    AUTHOR = {Tradacete, P. and Villanueva, I.},
     TITLE = {Continuity and representation of valuations on star bodies},
   JOURNAL = {Adv. Math.},
  FJOURNAL = {Advances in Mathematics},
    VOLUME = {329},
      YEAR = {2018},
     PAGES = {361--391},
      ISSN = {0001-8708},
   MRCLASS = {52A30 (52B45)},
       DOI = {10.1016/j.aim.2018.02.021},
       URL = {https://doi.org/10.1016/j.aim.2018.02.021},
}

@article {TrViLattices,
    AUTHOR = {Tradacete, P. and Villanueva, I.},
     TITLE = {Valuations on {B}anach lattices},
   JOURNAL = {Int. Math. Res. Not. IMRN},
  FJOURNAL = {International Mathematics Research Notices. IMRN},
      YEAR = {2020},
     PAGES = {287--319},
      ISSN = {1073-7928},
   MRCLASS = {46B42 (28D05)},
       DOI = {10.1093/imrn/rny129},
       URL = {https://doi.org/10.1093/imrn/rny129},
}

@article {Tsang,
    AUTHOR = {Tsang, A.},
     TITLE = {Valuations on {$L^p$}-spaces},
   JOURNAL = {Int. Math. Res. Not. IMRN},
  FJOURNAL = {International Mathematics Research Notices. IMRN},
      YEAR = {2010},
     PAGES = {3993--4023},
      ISSN = {1073-7928,1687-0247},
   MRCLASS = {46E30 (46B40 46E05 52B45)},
       DOI = {10.1090/S0002-9947-2012-05681-9},
       URL = {https://doi.org/10.1090/S0002-9947-2012-05681-9},
}

@article {Vi,
    AUTHOR = {Villanueva, I.},
     TITLE = {Radial continuous rotation invariant valuations on star
              bodies},
   JOURNAL = {Adv. Math.},
  FJOURNAL = {Advances in Mathematics},
    VOLUME = {291},
      YEAR = {2016},
     PAGES = {961--981},
      ISSN = {0001-8708},
   MRCLASS = {52A30 (52B45)},
       DOI = {10.1016/j.aim.2015.12.030},
       URL = {https://doi.org/10.1016/j.aim.2015.12.030},
}

@article {Wang2014,
    AUTHOR = {Wang, T.},
     TITLE = {Semi-valuations on {${\rm BV}(\mathbb R^n)$}},
   JOURNAL = {Indiana Univ. Math. J.},
  FJOURNAL = {Indiana University Mathematics Journal},
    VOLUME = {63},
      YEAR = {2014},
     PAGES = {1447--1465},
      ISSN = {0022-2518,1943-5258},
   MRCLASS = {46E35 (52B45)},
       DOI = {10.1512/iumj.2014.63.5365},
       URL = {https://doi.org/10.1512/iumj.2014.63.5365},
}

@article {ZhuZhouXu,
    AUTHOR = {Zhu, B. and Zhou, J. and Xu, W.},
     TITLE = {Dual {O}rlicz--{B}runn--{M}inkowski theory},
   JOURNAL = {Adv. Math.},
  FJOURNAL = {Advances in Mathematics},
    VOLUME = {264},
      YEAR = {2014},
     PAGES = {700--725},
      ISSN = {0001-8708,1090-2082},
   MRCLASS = {52A40 (52A39 53A15)},
  MRNUMBER = {3250296},
MRREVIEWER = {Qingzhong\ Huang},
       DOI = {10.1016/j.aim.2014.07.019},
       URL = {https://doi.org/10.1016/j.aim.2014.07.019},
}

@article {DrewOrlicz,
    AUTHOR = {Drewnowski, L. and Orlicz, W.},
     TITLE = {On orthogonally additive functionals},
   JOURNAL = {Bull. Acad. Polon. Sci. S\'er. Sci. Math. Astronom. Phys.},
  FJOURNAL = {Bulletin de l'Acad\'emie Polonaise des Sciences. S\'erie des
              Sciences Math\'ematiques, Astronomiques et Physiques},
    VOLUME = {16},
      YEAR = {1968},
     PAGES = {883--888},
      ISSN = {0001-4117},
   MRCLASS = {46.35},
  MRNUMBER = {244755},
MRREVIEWER = {H.\ Gordon},
}

@article {Jonas2026,
    AUTHOR = {Knoerr, J.},
     TITLE = {Polynomial local functionals on convex functions},
   JOURNAL = {arXiv:2512.15402},
      YEAR = {2026},

}
\bibliographystyle{acm}

\end{document}